\documentclass[11pt,reqno]{amsart}

\usepackage[utf8]{inputenc}
\usepackage[T1]{fontenc}
\usepackage{amsmath,amssymb,amsthm,amscd,amsfonts}
\usepackage{mathrsfs,enumerate,graphicx,float}
\usepackage[all]{xy}
\usepackage[colorlinks=true,linkcolor=blue,citecolor=red,urlcolor=magenta,pdfusetitle]{hyperref}
\usepackage{tikz-cd}
\usetikzlibrary{calc}
\theoremstyle{plain}
\newtheorem{theorem}{Theorem}[section]
\newtheorem{lemma}[theorem]{Lemma}
\newtheorem{proposition}[theorem]{Proposition}
\newtheorem{corollary}[theorem]{Corollary}

\theoremstyle{definition}
\newtheorem{definition}[theorem]{Definition}
\newtheorem{example}[theorem]{Example}

\theoremstyle{remark}
\newtheorem{remark}[theorem]{Remark}

\numberwithin{equation}{section}

\newcommand{\cA}{\mathcal{A}}
\newcommand{\cB}{\mathcal{B}}
\newcommand{\cC}{\mathcal{C}}

\newcommand{\Proj}{\operatorname{Proj}}

\newcommand{\pd}{\operatorname{pd}}
\newcommand{\gldim}{\operatorname{gl.dim}}
\newcommand{\id}{\operatorname{id}}
\newcommand{\Hom}{\operatorname{Hom}}

\newcommand{\Ext}{\operatorname{Ext}}

\newcommand{\Ker}{\operatorname{Ker}}
\newcommand{\Coker}{\operatorname{Coker}}

\newcommand{\modcat}{\operatorname{mod}}

\newcommand{\dell}{\operatorname{dell}}
\newcommand{\ddell}{\operatorname{ddell}}
\newcommand{\Findim}{\operatorname{Findim}}

\newcommand{\op}{\mathrm{op}}
\newcommand{\into}{\lhook\joinrel\longrightarrow}
\newcommand{\onto}{\twoheadrightarrow}

\begin{document}

\title[Reduction techniques for the derived delooping levels]
{Reduction techniques for the derived delooping levels}

\author{Kaili Wu}
\address{College of Science, Nanjing Forestry University,
Nanjing 210037, Jiangsu, P.~R.~China}
\email{kailywu@163.com}

\author{Jiaqun Wei}
\address{School of Mathematical Sciences,
Zhejiang Normal University,
Jinhua 321004, Zhejiang, P.~R.~China}
\email{weijiaqun5479@zjnu.edu.cn}

\author{Dajun Liu}
\address{School of Mathematics-Physics and Finance,
Anhui Polytechnic University,
Wuhu 241000, Anhui, P.~R.~China}
\email{liudajun@ahpu.edu.cn}

\author{Weiqing Cao}
\address{School of Mathematics and Statistics,
Jiangsu Normal University,
Xuzhou 221116, Jiangsu, P.~R.~China}
\email{weiqingcao@jsnu.edu.cn}

\maketitle

\noindent {\bf Abstract}: The derived delooping level is a recently introduced homological invariant that provides an upper bound for the finitistic dimension of the opposite algebra. In this paper, we employ two reduction techniques—cleft extensions and recollements—to study the finiteness of the derived delooping level of finite-dimensional algebras over a field. By applying the theory of cleft extensions to bound quiver algebras, we establish arrow-removal operations that preserve the finiteness of the derived delooping level. In parallel, using recollement techniques, we develop vertex-removal operations with the same finiteness-preserving property. We conclude with several examples illustrating the applicability and effectiveness of these reduction methods.\vskip 12pt

\noindent {\bf Keywords}: derived delooping levels; cleft extensions; recollements; finitistic dimensions

\noindent {\bf MSC2020}: 16E10; 18E10;18G20

\section{Introduction}\label{sec:introduction}

The finitistic dimension conjecture asks whether the projective
dimensions of finitely generated modules of finite projective dimension
over an Artin algebra admit a uniform bound. It has been studied using
Igusa--Todorov functions, derived equivalences, idempotent ideals, and
various homological reduction techniques; see, for example,
\cite{IT,PX,JW,XI,XII,XIII}.

A syzygy-theoretic approach was initiated by G\'elinas, who introduced
the delooping level and proved that it gives an upper bound for the big
finitistic dimension of the opposite algebra \cite{Gelinas2022}.
Delooping levels have since been investigated for several classes of
algebras, including radical-square-zero and Nakayama algebras
\cite{G1,R,S}. The relation between the delooping level and the
finitistic dimension was studied further in \cite{BLM}. The limitations
of the delooping level became particularly apparent when Kershaw and
Rickard constructed a finite-dimensional algebra with infinite
delooping level \cite{KershawRickard2024}.

Motivated by the need for a finer invariant, Guo and Igusa introduced
the effective, sub-derived, and derived delooping levels
\cite{GuoIgusa2025}. For a finite-dimensional algebra \(\Lambda\), they
proved that
\[
\Findim\Lambda^{\op}
=
\operatorname{edell}\Lambda
\leq
\ddell\Lambda
\leq
\dell\Lambda.
\]
They also showed that the class of finitely generated modules of finite
derived delooping level is closed under submodules and extensions. For
the Kershaw--Rickard algebra, their calculation gives
\[
\ddell\Lambda=1<\infty=\dell\Lambda.
\]
Thus the derived delooping level retains finitistic-dimension
information that is not detected by the original delooping level. The
symmetry properties of this new invariant were subsequently studied in
\cite{Guo2025}. Related question concerning delooping-type invariants
under stable equivalences is considered in
\cite{XiZhang2026}. These developments motivate the study of
reduction techniques for derived delooping levels undertaken in the
present paper.

 Cleft extensions of abelian categories, introduced by Beligiannis \cite{Beligiannis2000}, simultaneously cover  trivial extensions  and  tensor rings. In particular,  cleft extensions of module categories are precisely $\theta$-extensions \cite[Proposition 6.9]{KP2025}. Several families of rings arise from cleft extensions and their  inclusion relations are summarized below\cite{K2026}:
 \[
\begin{aligned}
\{\text{One-point extensions}\}\ &\ \subseteq \ &\ 
\{\text{Triangular matrix rings}\}\\
\rotatebox{-90}{$\subseteq$}\ \ \ \ \ \ \ & &\rotatebox{-90}{$\subseteq$}\ \ \ \ \ \ \ \ \  \ \ \ \  \ \ \\
\{\text{0-0 Morita context rings}\}\ 
& 
 &
\{\text{Tensor rings}\}\ \ \ \ \ \
\\
\rotatebox{-90}{$\subseteq$}\ \ \ \ \ \ \ & &\rotatebox{-90}{$\subseteq$}\ \ \ \ \ \ \ \ \  \ \ \ \  \ \ \\
\{\text{Trivial extension rings}\}\ 
&\ \subseteq\ 
 &\ 
\{\text{Positively graded rings}\}
&\ \subseteq\ 
 &\ 
\{\theta\text{-extensions}\}.
\end{aligned}
\]
Hence cleft extensions of abelian categories provide the proper conceptual framework  for the study of many problems in several different contexts, such as, finitistic dimensions and global dimensions \cite{EGPS2025,Giatagantzidis2025, GPS2021}, Igusa--Todorov distances, extension dimensions and Rouquier dimensions \cite{MaZhengLiu2026}, and Gorenstein weak (flat-cotorsion) global dimension \cite{LiangMaYang2025}. Motivated by these existing works,  our first goal is to incorporate    derived delooping levels into the  framework of cleft extension for further exploration. 

While our primary motivation comes from results on cleft ring extensions, the arguments in our proofs rely only on the homological machinery of cleft extensions of abelian categories. Moreover, many examples of cleft extensions do not arise from ring-theoretic constructions. It is therefore natural to develop the theory first in the setting of abelian categories and then specialize the resulting statements to module categories.

To this end, we extend the notion of derived delooping levels from module categories to abelian categories and establish some of their basic properties in this more general setting. We then obtain the following quantitative results for cleft extensions.

\begin{theorem}\label{thm:intro-cleft}{\rm(Theorem~\ref{thm:cleft-transfer})}
Let $(\cB,\cA,i,e,l)$ be a cleft extension of length categories with finitely many isomorphism classes of simple objects and enough projective objects. Assume that $l$ is exact and that $e$ preserves projective objects. Set
\[
L_e
=
\max\bigl\{
\ell_{\cB}(eS)
\mid
S\in\operatorname{simp}\cA
\bigr\},
\]
where $\ell_{\cB}(eS)$ denotes the composition length of an object
$eS$ in $\cB$. 
\begin{enumerate}[(a)]
\item 
Suppose that $e$ admits an exact right adjoint $r$ and  $c_r:=\sup\{\pd_{\cA}r(P)\mid P\in\Proj\cB\}<\infty$. Then
\[
\ddell(\cB)
\leq
\ddell(\cA)
\leq
\pd(r)+L_e\bigl(\ddell(\cB)+1\bigr).
\]
Moreover, if $F^n=0$ for some $n\geq1$ and $\pd(r)<\infty$, then, for every $k\geq1$,
\[
k\text{-}\ddell(\cA)<\infty
\quad\Longleftrightarrow\quad
k\text{-}\ddell(\cB)<\infty.
\]

\item Suppose that
$
\operatorname{Im}G\subseteq\operatorname{Proj}\cA,
$ Then, for every $k\geq1$,
\[
k\text{-}\ddell(\cB)
\leq
k\text{-}\ddell(\cA)
\leq
\max\bigl\{
1,\,
L_e\bigl(k\text{-}\ddell(\cB)+1\bigr)-1
\bigr\}.
\]
In particular, $k\text{-}\ddell(\cA)<\infty
\quad\Longleftrightarrow\quad
k\text{-}\ddell(\cB)<\infty.$
\end{enumerate}
\end{theorem}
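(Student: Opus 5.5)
The plan is to work with the characterisation of $\ddell$ (and $k$-$\ddell$) through simple objects. For a simple $S$, $k$-$\ddell(S)\le n$ means that $\Omega^n S$ is, up to projective summands, a direct summand of $\Omega^{n+k}$ of some object, or more generally that $S$ has a derived $k$-delooping witnessed at level $n$. Two facts about the class of objects of finite derived delooping level will be used throughout: it is closed under subobjects and extensions (the abelian-category version of the Guo--Igusa result established earlier), and its level grows in a controlled way along composition series. The proof then transfers such witnesses along the exact functors $i,e,l$ (and $r$) of the cleft extension. The standing identities are $ei\cong\mathrm{id}_\cB$, $l\dashv e$, and $el\cong \mathrm{id}_\cB\oplus F$ with $G$ the companion endofunctor on $\cA$. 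Exactness of $l$ combined with $e$ preserving projectives makes $l$ preserve projectives as well. Hence $l$ sends projective resolutions to projective resolutions and commutes with syzygies up to projective summands.

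For the left inequalities $\ddell(\cB)\le\ddell(\cA)$ and $k$-$\ddell(\cB)\le k$-$\ddell(\cA)$, take a simple $T$ of $\cB$. Write $T$ as a summand of $e\,iT$, and view $iT$ as an object of $\cA$ whose composition factors are simple. The exact, projective-preserving functor $e$ carries a delooping witness of $iT$ in $\cA$ to one of $eiT\cong T$, since $e\Omega^n_\cA X\cong\Omega^n_\cB eX$ up to projectives. This requires $e$ to be exact, which it is as a right adjoint in a cleft extension, and to preserve projectives, which is assumed. The inequality for $iT$ itself then follows from extension closure in $\cA$.

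For the right inequalities, take a simple $S$ in $\cA$. Its image $eS$ has length at most $L_e$, so the subadditivity lemma along a composition series gives $\ddell(eS)\le L_e(\ddell\cB+1)-1$. In case (b) I then use the counit sequence $0\to G S\to leS\to S\to 0$, or the analogous presentation of $S$ coming from the cleft structure. Since $\operatorname{Im}G$ consists of projectives, $S$ differs from $leS$ by a projective in the first syzygy. The object $l$ applied to a witness for $eS$ is a witness for $leS$. Applying $\Omega$ once gives $\Omega S\simeq \Omega(leS)$ up to projectives, and this yields the bound $\max\{1,L_e(k\text{-}\ddell\cB+1)-1\}$.

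In case (a) I use the right adjoint $r$ instead. The unit $S\to reS$ is mono since $e$ is faithful on simples. Closure under subobjects lets me bound $S$ by $reS$. Because $r$ is exact and sends projectives to objects of projective dimension at most $\pd(r)$, a witness for $eS$ transports to one for $reS$ at the cost of a shift by $\pd(r)$. This produces $\pd(r)+L_e(\ddell\cB+1)$. For the $k$-version with $F$ nilpotent, I filter by the powers of $F$. Nilpotency makes the tensor-type filtration of $le$ finite, so finitely many extensions show that finiteness transfers.

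I expect the main obstacle to be the step transporting a derived delooping witness through $r$. Syzygies computed from $r(P^\bullet)$ are not projective, so I must replace $r(P^\bullet)$ by a genuine resolution. The plan is to use a horseshoe/mapping-cone argument, with the bounded number $c_r$ controlling the shift. I will also have to check that the resulting summand relations survive this replacement.
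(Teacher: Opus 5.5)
You have a genuine gap in the $k$-version of (a) under $F^n=0$. The rest of your outline follows the paper: the length bound for $eS$, transport through $r$ resp.\ $l$, then the unit $S\hookrightarrow reS$ for $k=1$ resp.\ $0\to GS\to leS\to S\to 0$ with projective kernel. For the $k$-version you filter via $le$, i.e.\ via $0\to G^{j+1}S\to le(G^jS)\to G^jS\to0$. There $S$ and the $G^jS$ occur as \emph{quotients}, and finiteness of $k$-$\ddell$ does not pass to quotients. Already for $k=1$, applying such a claim to $0\to\Omega Z\to P_Z\to Z\to0$, where $\ddell(\Omega Z)=\ddell(P_Z)=0$, would give every object finite $\ddell$ and hence prove the finitistic dimension conjecture.

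Nor can you invoke closure under subobjects for $S\hookrightarrow reS$ when $k\geq2$. Proposition~\ref{prop:extension-estimate} only gives $k\text{-}\ddell_{\cA}S\leq k\text{-}\ddell_{\cA}(reS)+k\text{-}\ddell_{\cA}(G'S)+1$ with $G'S=\Coker(S\to reS)$. The cokernel term can be dropped only for $k=1$: it enters through $\Omega(G'S)$, whose $1$-delooping level is $0$ but whose $k$-delooping level need not be.

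The missing idea is to filter on the coextension side. By Lemma~\ref{lem:F-G-nilpotent} (via the adjunction $F\dashv F'$), $F^n=0$ implies $(G')^n=0$. So the sequences $0\to(G')^jS\to re\bigl((G')^jS\bigr)\to(G')^{j+1}S\to0$, $0\leq j<n$, form a finite chain in which the unknown object is always the subobject. Each $re\bigl((G')^jS\bigr)$ has finite $k$-$\ddell$: apply the length bound to $e(G')^jS$, then transfer through $r$ using $\pd(r)<\infty$. Applying the second inequality of Proposition~\ref{prop:extension-estimate} for $j=n-1,\dots,0$, starting from $(G')^nS=0$, gives $k\text{-}\ddell_{\cA}S<\infty$.

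Your left-hand inequalities also do not come out as stated. Extension closure along a composition series of $iT$ only gives $k\text{-}\ddell_{\cA}(iT)\leq\ell_{\cA}(iT)\bigl(k\text{-}\ddell(\cA)+1\bigr)-1$. You need that $iT$ is simple in $\cA$. If $0\neq U\subseteq iT$, faithfulness and exactness of $e$ give $0\neq eU\subseteq eiT\cong T$, so $e(iT/U)=0$, and faithfulness forces $U=iT$ (Lemma~\ref{lem:simple-under-i}). Then $k\text{-}\ddell_{\cB}T\leq k\text{-}\ddell_{\cA}(iT)\leq k\text{-}\ddell(\cA)$.

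In (b), the stable isomorphism $\Omega S\simeq\Omega(leS)$ is not what transfers the witness. Splicing a witness of $\Omega S$ onto $P_S\to S$ would raise the required $(i+k)$-delooping levels to $(i+k+1)$-levels. Instead, take a witness $(C_i)$ of $leS$ and compose $C_0\to leS\to S$. Its kernel is $\Ker d_0\oplus GS$ because $GS$ is projective, so replacing $C_1$ by $C_1\oplus GS$ gives a witness for $S$ (Lemma~\ref{L-projective-kernel-ddell}). The $\max\{1,\cdot\}$ comes from the case $k\text{-}\ddell(leS)=0$, where $0\to GS\to leS\to S\to0$ is itself a witness at level $1$.
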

Here and below, an expression on the right-hand side is understood to
be $\infty$ whenever one of the derived delooping levels occurring in
it is infinite.

As a direct application of part~($b$), we obtain a sharper numerical comparison for the arrow-removal operation studied in
\cite{EPS2022,GPS2021}.

\begin{theorem}[Theorem~\ref{thm:arrow-removal}, Derived-delooping arrow removal]
\label{thm:intro-arrow-removal}
Let $Q$ be a finite quiver and let
$
\Lambda=\Bbbk Q/I
$
be an admissible quotient of the path algebra $\Bbbk Q$ over a field $\Bbbk$. Suppose that
\[
a_i:v_{e_i}\longrightarrow v_{f_i},
\qquad
i=1,\ldots,t,
\]
are arrows of $Q$ which do not occur in a set of minimal generators of $I$ and satisfy
\[
\Hom_{\Lambda}(e_i\Lambda,f_j\Lambda)=0
\qquad
\text{for all }1\leq i,j\leq t.
\]
Set
$
\Gamma
=
\Lambda/
\Lambda\{\overline a_1,\ldots,\overline a_t\}\Lambda.
$
Then, for every $k\geq1$,
\[
k\text{-}\ddell\Gamma
\leq
k\text{-}\ddell\Lambda
\leq
\max\bigl\{
1,\,
k\text{-}\ddell\Gamma
\bigr\}.
\]
\end{theorem}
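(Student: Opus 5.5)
Following the remark before the statement, we derive the result from part~(b) of Theorem~\ref{thm:intro-cleft} applied to a cleft extension of module categories built from the arrow removal, and then show that the constant $L_e$ equals $1$ here.

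\textbf{Step 1: the cleft extension.} By the results of \cite{GPS2021,EPS2022} on arrow removal, the hypotheses give the following. The arrows $a_i$ do not occur in a minimal generating set of $I$, and $\Hom_\Lambda(e_i\Lambda,f_j\Lambda)=0$ for all $i,j$. Hence the ideal $M=\Lambda\{\overline a_1,\ldots,\overline a_t\}\Lambda$ is a $\Gamma$-bimodule, and there is an isomorphism
\[
\Lambda\cong T_\Gamma(M)
\]
with the tensor ring of $M$ over $\Gamma$. Moreover $M\cong\bigoplus_{i}\Gamma f_i\otimes_\Bbbk e_i\Gamma$ (up to the paper's left/right convention), which is projective as a left and as a right $\Gamma$-module, and $M\otimes_\Gamma M=0$.

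\textbf{Step 2: identify the functors and verify the hypotheses of part~(b).} The tensor ring gives the cleft extension
\[
(\modcat\Gamma,\modcat\Lambda,i,e,l),
\]
where $i$ is restriction along $\Lambda\onto\Gamma$, $e$ is restriction along $\Gamma\into\Lambda$, and $l=-\otimes_\Gamma\Lambda$. The functor $l$ is exact because $\Lambda=\bigoplus_n M^{\otimes n}$ is projective over $\Gamma$. The functor $e$ preserves projectives, since $e\Lambda\cong\Gamma\oplus M$ is projective over $\Gamma$. The endofunctor $F=-\otimes_\Gamma M$ satisfies $F^2=0$.

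It remains to check the hypothesis $\operatorname{Im}G\subseteq\Proj\cA$. Here $G$ is the functor attached to the cleft extension in the paper's setup, essentially $X\mapsto l(FX)$ or $X\mapsto X\otimes_\Gamma M\otimes_\Gamma\Lambda$. Since $M$ is a direct sum of modules $\Gamma f_i\otimes e_i\Gamma$, the module $X\otimes_\Gamma M$ is a direct sum of copies of the projectives $e_i\Gamma$. Applying $l$ then yields projective $\Lambda$-modules. I expect this identification of $G$, together with the projectivity check, to be the main technical point. It is exactly where the condition $\Hom(e_i\Lambda,f_j\Lambda)=0$ enters: it forces $M\otimes_\Gamma M=0$ and makes $M$ split as a sum of tensor products of projectives.

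\textbf{Step 3: compute $L_e$.} The algebra $\Lambda$ is basic with the same vertices as $\Gamma$, and $\overline{a}_i$ lies in the radical of $\Lambda$. So every simple $\Lambda$-module $S_v$ restricts under $e$ to the simple $\Gamma$-module $S_v$, and hence
\[
L_e=\max\{\ell_{\modcat\Gamma}(eS)\mid S\in\operatorname{simp}\Lambda\}=1.
\]
Substituting into Theorem~\ref{thm:intro-cleft}(b) gives
\[
k\text{-}\ddell\Gamma\le k\text{-}\ddell\Lambda\le\max\{1,\,(k\text{-}\ddell\Gamma+1)-1\}=\max\{1,\,k\text{-}\ddell\Gamma\},
\]
which is the claimed inequality.
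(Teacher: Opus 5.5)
Your proposal is correct and follows the paper's proof. You apply Theorem~\ref{thm:cleft-transfer}(b) to the arrow-removal cleft extension $(\modcat\text{-}\Gamma,\modcat\text{-}\Lambda,i,e,l)$ and get $L_e=1$ because $\Lambda\{\overline a_i\}\Lambda\subseteq\operatorname{rad}\Lambda$; the only difference is that the paper cites \cite[Proposition~4.6]{GPS2021} for exactness of $l$, for $e$ preserving projectives, and for $\operatorname{Im}G\subseteq\operatorname{proj}\Lambda$, whereas you sketch these via the trivial-extension description $\Lambda\cong\Gamma\ltimes M$ with $M\otimes_\Gamma M=0$ and $GX\cong X\otimes_\Gamma M\otimes_\Gamma\Lambda$ (that is, $G\cong lFe$ rather than $lF$).
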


Recollements, introduced by Beilinson, Bernstein, and Deligne
\cite{BBD}, provide a useful framework for decomposing a category into
two simpler parts and comparing their homological properties. Several
equivalent formulations of recollements of abelian categories have
been developed; see, for example, \cite{FP,PH}. In representation
theory, recollement techniques have been used to study global and
finitistic dimensions and to reduce homological conjectures
\cite{CX,K,QH}. Their connections with idempotent ideals and related
homological invariants have also been investigated
\cite{APT,GLP,XIII}.

Motivated by these developments, we study derived delooping levels in
recollements of abelian categories.  The main results of this part  gives
general criteria for transferring the finiteness of the
$k$-derived delooping level among the three categories of a
recollement. These criteria are formulated in terms of the exactness
of the recollement functors and uniform bounds on the projective
dimensions of the images of projective objects.

\begin{theorem}\label{1}(Theorem \ref{T1})
Let $(\cA,\cB,\cC)$ be a recollement of length categories with finitely many   isomorphism classes of  simple objects and enough projective objects as the following diagram
\[
\begin{tikzcd}[column sep=5.5em]
\cA
\arrow[bend left=0]{r}{i}
&
\cB
\arrow[bend left=0]{r}{e}
\arrow[bend left=40]{l}{p}
\arrow[bend right=40]{l}[above]{q}
&
\cC.
\arrow[bend left=40]{l}{r}
\arrow[bend right=40]{l}[above]{l}
\end{tikzcd}
\]
(a) Suppose that $c_e:=\sup\{\pd_{\cC}e(P)\mid P\in\Proj\cB\}<\infty$,   If  $k\text{-}\mathrm{ddell}\cB<\infty$, then   $k\text{-}\mathrm{ddell} \cC<\infty$.\\
(b) Suppose that $q$ is exact.  If  $k\text{-}\mathrm{ddell}\cB<\infty$, then   $k\text{-}\mathrm{ddell} \cA<\infty$.\\
(c) Suppose that $r$ is exact,  $c_i:=\sup\{\pd_{\cB}i(P)\mid P\in\Proj\cA\}<\infty$ and   $c_r:=\sup\{\pd_{\cB}r(P)\mid P\in\Proj\cC\}<\infty$. If  $k\text{-}\mathrm{ddell}\cA<\infty$ and  $k\text{-}\mathrm{ddell} \cC<\infty$, then  $k\text{-}\mathrm{ddell}\cB<\infty$.
\end{theorem}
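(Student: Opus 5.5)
The plan rests on the Guo–Igusa definition of the $k$-derived delooping level, extended to abelian categories earlier in the paper. For a simple object $S$, $k\text{-}\ddell S\le n$ means that $\Omega^n S$ is, up to projective summands, a direct summand of an object built from $k$-th syzygies (or of the $n$-th syzygy of a complex of modules with appropriate vanishing). Since there are finitely many simples, $k\text{-}\ddell$ of the category is the maximum over the simples. I would also use the facts already established: the class of objects of finite derived delooping level is closed under subobjects and extensions, and the basic syzygy comparison lemmas, namely that exact functors preserving projectives (up to finite projective dimension $c$) transport syzygy sequences, with a shift of at most $c$. In each of (a)–(c) the scheme is the same. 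Express every simple of the target category through images of simples of the source category under a recollement functor. Transport a witness of finite level along that functor. Finally, use closure under subobjects, quotients-by-syzygy shifts and extensions to conclude.

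For (a), recall that the simples of $\cC$ are exactly $eS$ for simples $S$ of $\cB$ with $eS\neq0$, and that $e$ is exact. I would apply $e$ to a witness for $S$: a projective resolution $P_\bullet\to S$ gives an exact sequence $eP_\bullet\to eS$ with $\pd eP_j\le c_e$. A standard horseshoe/dimension-shift argument then shows that $e(\Omega^n S)$ agrees with $\Omega^{n+c_e}(eS)$ up to a bounded shift and projective summands. Likewise $e$ maps $k$-th syzygies to objects that become $k$-th syzygies after $c_e$ more steps. This yields $k\text{-}\ddell eS\le k\text{-}\ddell S+c_e$.

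For (b), the simples of $\cA$ are those $S\in\operatorname{simp}\cB$ with $eS=0$, identified with $i(S')$. Since $q$ is left adjoint to the exact functor $i$, it preserves projectives, and if $q$ is exact it carries projective resolutions in $\cB$ to projective resolutions in $\cA$ and syzygies to syzygies. Because $qi\cong\id$, applying $q$ to a witness for $iS'$ in $\cB$ gives one for $S'$ in $\cA$, with the same $n$. Hence $k\text{-}\ddell S'\le k\text{-}\ddell iS'$.

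For (c), every simple $S$ of $\cB$ is either $i(T)$ with $T\in\operatorname{simp}\cA$, or satisfies $eS\neq0$. In the second case $S$ is the image of $l(eS)\to r(eS)$, so in particular $S$ is a subobject of $r(eS)$. I would therefore prove that $r$ and $i$ send objects of finite $k$-derived delooping level to objects of finite level, bounded by $+c_r$ and $+c_i$ respectively. Both $i$ and $r$ are exact, so a projective resolution of $T$ is carried to a resolution whose terms have projective dimension at most $c_i$ (respectively $c_r$), and syzygies shift by at most that amount. Closure under subobjects then handles $S\subseteq r(eS)$, and $i(T)$ is handled directly. The main obstacle, which I expect here and which is already present in (a), is transporting the ``derived'' part of the witness rather than just syzygies. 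A functor whose values on projectives are merely of finite projective dimension does not map $k$-th syzygies to $k$-th syzygies. I would first handle this by replacing each term $FP_j$ with a finite projective resolution and taking the total complex (iterated mapping cones), which realizes $F(\Omega^n X)$ as $\Omega^{n+c}$ of something up to projective summands. I would then check that the inequality defining the level survives this replacement with the index shifted by $c$.
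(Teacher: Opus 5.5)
Parts (a) and (b) are fine. Part (b) is the paper's argument. In (a), working with a simple $S$ of $\cB$ satisfying $eS\cong T$, instead of with $lT$ and Lemma~\ref{lem:length-bound}, even gives the sharper bound $k\text{-}\ddell(\cC)\le c_e+k\text{-}\ddell(\cB)$. The transfer step you flag as the main obstacle is already Proposition~\ref{thm:exact-functor-transfer}. Note, though, that it is $\Omega^{c}T(\Omega^nX)$, not $T(\Omega^nX)$ itself, that is stably isomorphic to $\Omega^{c+n}T(X)$.

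The gap is in (c), at the step ``closure under subobjects then handles $S\subseteq r(eS)$''. This closure is only available for $k=1$ (Lemma~\ref{lem:subobject-estimate}). Its proof rotates $0\to X\to Y\to Z\to0$ into $0\to\Omega Z\to X\oplus P_Z\to Y\to0$ and then uses $\dell(\Omega Z)=0$. For $k\ge2$ a first syzygy need not be a stable retract of a $k$-th syzygy, and nothing bounds $k\text{-}\ddell(\Omega Z)$.

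You cannot expect such a closure at all. Splicing a witness for $\Omega Z$ with $0\to\Omega Z\to P_Z\to Z\to0$ gives $(k-1)\text{-}\ddell Z\le k\text{-}\ddell(\Omega Z)+1$. So subobject-closure for $2\text{-}\ddell$, applied to $\Omega Z\subseteq P_Z$, would make $\ddell$ finite for every module over every finite-dimensional algebra. That would prove the finitistic dimension conjecture. Hence for $k\ge2$ your argument for simples $S$ with $eS\neq0$ does not go through.

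The missing idea is to keep track of the cokernel as well. Since $eS\neq0$ forces $p(S)=0$, Proposition~\ref{P2.2}(d) gives a short exact sequence $0\to S\to r(eS)\to i(Y'_S)\to0$. You already bound $k\text{-}\ddell_{\cB}\,r(eS)\le c_r+k\text{-}\ddell(\cC)$. In addition:
\begin{itemize}
\item bound $k\text{-}\ddell_{\cA}Y'_S\le\ell_{\cA}(Y'_S)\,\bigl(k\text{-}\ddell(\cA)+1\bigr)-1$ by Lemma~\ref{lem:length-bound};
\item transfer this along $i$ at cost $c_i$;
\item apply the second inequality of Proposition~\ref{prop:extension-estimate}, namely $k\text{-}\ddell X\le k\text{-}\ddell Y+k\text{-}\ddell Z+1$.
\end{itemize}
That inequality controls a subobject only when the quotient is controlled too. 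This is exactly where the paper needs $c_i<\infty$ and $k\text{-}\ddell(\cA)<\infty$ beyond the simples of the form $i(T)$.
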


A second result in the recollement setting shows that, when the objects
coming from $\cA$ have projective dimension at most one in $\cB$, the
finiteness problem can be reduced directly from the middle category
$\cB$ to the quotient category $\cC$.

\begin{theorem}[Theorem~\ref{thm:2}]\label{2}
Let $(\cA,\cB,\cC)$ be a recollement of length categories with finitely
many   isomorphism classes of  simple objects and enough projective objects. Assume that the
$\cB$-relative projective global dimension of $\cA$, defined by $\operatorname{pgl.dim}_{\cB}\cA
=
\sup\{
\pd_{\cB}i(A)
\mid A\in\cA
\},$ 
is at most one. Then, for every $k\geq1$,
\[
k\text{-}\ddell(\cB)<\infty
\quad\Longleftrightarrow\quad
k\text{-}\ddell(\cC)<\infty.
\]
\end{theorem}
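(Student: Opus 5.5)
The plan is to deduce both implications from Theorem~\ref{1}, using the hypothesis $\operatorname{pgl.dim}_{\cB}\cA\leq 1$ to supply the missing exactness and projective-dimension bounds.

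\textbf{Step 1: the functor $e$ and the bound $c_e$.} In the recollement, $l\dashv e\dashv r$ and $i$ is fully faithful with image $\Ker e$, while $e$ is exact. Since $e$ has the exact left adjoint\dots{} no: $e$ has the right adjoint $r$, and $r$ is only left exact. More directly, $l$ is right exact and $e$ is exact, so $l$ sends projectives of $\cC$ to projectives of $\cB$. Because $el\cong\id$, every projective of $\cC$ is of the form $e(lP')$ with $lP'$ projective. Conversely, for $P\in\Proj\cB$, I would use the canonical exact sequence
\[
le(P)\to P\to iq(P)\to 0
\]
together with $\pd_{\cB} iq(P)\leq 1$. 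The aim is to show that $e(P)$ is a direct summand of an object of the form $e(\text{projective})$ of controlled projective dimension, giving $c_e<\infty$. Since $le(P)\to P$ factors through the projective cover of $\operatorname{tr}$-type subobjects, I would alternatively argue as follows. Set $K=\Ker(le(P)\to P)$; then $K\in i\cA$ because $e$ applied to the sequence is an isomorphism. Hence $\pd_{\cB} K\leq 1$, and the image $U$ of $le(P)\to P$ has finite projective dimension in $\cB$. This yields $\pd_{\cC}$-control after applying $e$. I expect this to give $c_e\leq 2$ or so, and then part~(a) of Theorem~\ref{1} yields $k$-$\ddell\cB<\infty\Rightarrow k$-$\ddell\cC<\infty$.

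\textbf{Step 2: the converse.} Here I would not invoke part~(c), since it requires $k$-$\ddell\cA<\infty$, which is not given. Instead, I would argue directly with the definition of the $k$-derived delooping level on simples. For a simple $S$ of $\cB$, either $eS\neq 0$, so that $S$ is the top of $l(eS)$ modulo an object in $i\cA$, or $S\in i\cA$, so that $\pd_{\cB} S\leq 1$. In the second case $S$ has derived delooping level at most $1$ (finite projective dimension suffices, by the basic properties established earlier). In the first case I would take the witnessing sequences and complexes for $eS$ in $\cC$ and apply $l$ (or its left derived functor). The discrepancy between $le$ and the identity is measured by objects of $i\cA$, all of projective dimension at most one. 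The closure of finite derived delooping level under extensions and submodules (the abelian-category version established earlier) then absorbs these error terms, giving a uniform bound over the finitely many simples.

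\textbf{Main obstacle.} The hardest point is Step 2: the transport of the $k$-syzygy/derived delooping data along $l$. The functor $l$ is not exact, and its left derived functors produce homology in $i\cA$. I would control this using $\pd\leq1$, which forces $\mathbb{L}_n l$ to vanish for $n\geq 2$ on the relevant objects. This keeps the shift in the delooping index bounded by a constant such as $1$ or $2$.
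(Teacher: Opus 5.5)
Your plan for the forward direction (bound $c_e$, then apply Theorem~\ref{T1}(a)) matches the paper's, but Step~1 never actually obtains the bound. The step \emph{this yields $\pd_{\cC}$-control after applying $e$} is circular. Knowing that $e$ carries objects of finite projective dimension to such objects is exactly what $c_e<\infty$ asserts. In any case $eU\cong eP\cong e\,le(P)$, so you learn nothing new about $eP$. Your \emph{hence $U$ has finite projective dimension} would also need $\pd_{\cB}le(P)<\infty$, which is unknown. The paper takes the needed fact from \cite[Proposition~3.5]{GPS2021}: under $\operatorname{pgl.dim}_{\cB}\cA\le1$, $e$ preserves projectives, so $c_e=0$. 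Your own sequences can prove this:
\begin{itemize}
\item $0\to U\to P\to iq(P)\to0$ with $\pd_{\cB}iq(P)\le1$ makes $U$ projective, so $le(P)\cong K\oplus U$.
\item Since $\Hom_{\cB}(le(P),K)\cong\Hom_{\cC}(eP,eK)=0$, we get $K=0$, so $le(P)$ is projective.
\item $l$ preserves epimorphisms and $el\cong\id$, so any epimorphism onto $eP$ splits after applying $l$, hence splits. Thus $eP$ is projective.
\end{itemize}

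Step~2 has a genuine gap. You cannot transport witnesses along $l$: it is only right exact, so Proposition~\ref{thm:exact-functor-transfer} does not apply. Moreover, the hypothesis does not make $\mathbb{L}_nl$ vanish for $n\ge2$. Take $\Lambda=\begin{pmatrix}\Bbbk&0\\ \Bbbk&\Bbbk[x]/(x^2)\end{pmatrix}$ (with $x$ acting by $0$ on $M=\Bbbk$) and the recollement of $e_2$. Every $i_2(X)=(X,0,0)$ is projective, yet $\mathbb{L}_nl_2(\Bbbk)=(\operatorname{Tor}_n^{\Bbbk[x]/(x^2)}(\Bbbk,\Bbbk),0,0)\neq0$ for all $n\ge1$. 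Also, in $0\to i(Y_S)\to l(eS)\to S\to0$ the simple $S$ is a quotient, while the closure properties you invoke concern submodules and extensions.

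The missing idea is to use the right adjoint $r$ instead. Once $e$ preserves projectives, $\Hom_{\cB}(P,r(-))\cong\Hom_{\cC}(eP,-)$ shows that $r$ is exact. The sequence $0\to ip(lQ)\to lQ\to r(Q)\to i(A')\to0$ then gives $\pd(r)\le2$. For a simple $S$ with $eS\neq0$, $eS$ is simple and $p(S)=0$, so $0\to S\to r(eS)\to i(Y'_S)\to0$ is exact. Now:
\begin{itemize}
\item $k\text{-}\ddell_{\cB}r(eS)\le 2+k\text{-}\ddell(\cC)$;
\item $k\text{-}\ddell_{\cB}i(Y'_S)\le\pd_{\cB}i(Y'_S)\le1$;
\item the submodule inequality of Proposition~\ref{prop:extension-estimate} then bounds $k\text{-}\ddell_{\cB}S$.
\end{itemize}
Simples lying in $i\cA$ have $k\text{-}\ddell\le1$, as you noted. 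This is precisely the mechanism of Theorem~\ref{T1}(c), which the paper applies directly. Your reason for rejecting that result does not hold: \cite[Proposition~3.5]{GPS2021} also makes $i$ a homological embedding, whence $\operatorname{gl.dim}\cA\le1$ and $k\text{-}\ddell(\cA)\le1$.
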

We next apply these categorical results to module categories of
finite-dimensional algebras. Psaroudakis and Vit\'oria \cite{PV}
showed that, for a semiprimary ring, every recollement of module
categories is equivalent to one induced by an idempotent. Since every
finite-dimensional algebra over a field is semiprimary, the
recollements relevant to our applications arise from idempotents. More precisely, let $\Lambda$ be a finite-dimensional bound quiver
algebra and let $e$ be a sum of vertex idempotents. The corner algebra
$e\Lambda e$ is supported on the vertices occurring in $e$. Thus,
passing from $\Lambda$ to $e\Lambda e$ removes the vertices
corresponding to $1-e$; we refer to this operation as
\emph{vertex removal}.  Applying Theorems~\ref{1} and~\ref{2} to
idempotent recollements yields the vertex-removal results in
Corollary~\ref{cor:idempotent-recollement}. The quantitative
recollement estimates further specialize to the triangular-reduction
results in Corollary~\ref{C1}. In particular,   for a lower triangular matrix algebra $\Lambda=
\begin{pmatrix}
A&0\\
{}_BM_A&B
\end{pmatrix},$ 
if $\pd_A M<\infty$, then Corollary~\ref{C1}(c) gives
\[
k\text{-}\ddell\Lambda
\leq
\max\left\{
k\text{-}\ddell A,\,
\pd_A M+1+k\text{-}\ddell B
\right\}.
\]
Finally, we apply these results to  homological-heart reduction.

The paper is organized as follows. In Section~2, we recall the basic
definitions and notation concerning abelian categories, syzygy objects,
projective dimensions, and stable categories. In Section~3, we introduce
delooping levels and derived delooping levels in abelian categories with
enough projective objects. We establish their basic properties,
including invariance under projective summands, syzygy-shifting
properties, and estimates associated with short exact sequences.

Section~4 is devoted to cleft extensions. We first establish stable
syzygy comparison results and transfer inequalities for exact functors
of finite projective dimension. We then study the behavior of derived
delooping levels under the functors associated with a cleft extension
and prove Theorem~\ref{thm:intro-cleft}. Applying this result to module
categories and bound quiver algebras, we obtain the arrow-removal
theorem stated as Theorem~\ref{thm:intro-arrow-removal}. We conclude the
section with an example illustrating how arrow removal replaces the
direct calculation of stable syzygies by a simpler computation over a
quotient algebra.

In Section~5, we study derived delooping levels in recollements of
abelian categories  and prove Theorems~\ref{1} and~\ref{2}.  
These categorical results are subsequently applied to recollements of
module categories induced by idempotents, yielding vertex-removal
criteria for finite-dimensional algebras. We also derive triangular
reduction for lower triangular matrix algebras and use it to obtain a
reduction from a bound quiver algebra to the algebra supported on its
homological heart.  Finally, we  illustrate these reduction methods
with several examples.

\section{Preliminaries}\label{sec:preliminaries}

Throughout the paper, all abelian categories are assumed to be
essentially small and to have enough projective objects unless stated
otherwise. We use the standard terminology for Artin algebras and
stable module categories from \cite{ARS}.

Let $\cA$ be an abelian category. We denote by $\Proj\cA$ the full
subcategory of projective objects and by $\underline{\cA}
=
\cA/[\Proj\cA]$
the stable category obtained by factoring out the ideal of morphisms
that factor through projective objects.

Recall that an abelian category $\cA$ is a \emph{length category}
\cite{Gabriel1973} if it is skeletally small and every object has a
finite composition series. Every object in a length category admits a
finite decomposition into indecomposable objects with local
endomorphism rings, and this decomposition is unique up to isomorphism
and permutation of the summands. If $\cA$ has only finitely many
isomorphism classes of simple objects, we write $
\mathcal S(\cA)$  for a fixed set of representatives of these classes.

For each object $X\in\cA$, choose an epimorphism $P_X\twoheadrightarrow X$ with $P_X\in\Proj\cA$, and define
\[
\Omega_{\cA}X
=
\Ker(P_X\twoheadrightarrow X).
\]
The chosen syzygies induce an additive endofunctor $\Omega_{\cA}:\underline{\cA}\longrightarrow\underline{\cA},$
which is again denoted by $\Omega_{\cA}$. An object $U$ is called a
\emph{stable retract} of an object $V$ if $U$ is a retract of $V$ in
$\underline{\cA}$; equivalently, there exist morphisms
\[
\underline{s}:U\longrightarrow V,
\qquad
\underline{p}:V\longrightarrow U
\]
in $\underline{\cA}$ such that
$\underline{p}\,\underline{s}
=
\underline{1}_U.$
\begin{definition}[{\cite[Definition~3.1]{MaZhengLiu2026}}]
\label{def:syzygy-object}
Let $X\in\cA$ and let $n\geq1$. An object $K\in\cA$ is called an
\emph{$n$-syzygy object} of $X$ if there is an exact sequence
\[
0\longrightarrow K
\longrightarrow P_{n-1}
\longrightarrow\cdots
\longrightarrow P_0
\longrightarrow X
\longrightarrow0
\]
with $P_i\in\Proj\cA$ for all $0\leq i\leq n-1$. By convention, $X$
itself is a $0$-syzygy object of $X$.
\end{definition}

The distinction between a chosen syzygy and a syzygy object is immaterial in the stable category. This will be proved in Lemma~\ref{lem:schanuel}.

When working with algebras, $\Lambda$ denotes a finite-dimensional
algebra over an algebraically closed field $\Bbbk$, unless stated
otherwise. We write $\operatorname{mod}\text{-}\Lambda$ 
for the category of finitely generated right $\Lambda$-modules and and $\operatorname{proj}(\Lambda)$ for its full subcategory consisting of finitely generated projective right $\Lambda$-modules. For
$X\in\operatorname{mod}\text{-}\Lambda$, we use $\operatorname{pd}_{\Lambda}X,\ 
\operatorname{id}_{\Lambda}X$
for its projective and injective dimensions, respectively, and $
\Omega_{\Lambda}^nX,$ $\Omega_{\Lambda}^{-n}X$
for its $n$th syzygy and $n$th cosyzygy, respectively. We set $
\Omega_{\Lambda}^{0}X=X.$ 
Moreover, $\operatorname{add}_{\Lambda}X$ denotes the full subcategory
of $\operatorname{mod}\text{-}\Lambda$ consisting of all direct
summands of finite direct sums of copies of $X$.

We next extend several standard properties of syzygies from module
categories to abelian categories with enough projective objects.

\begin{lemma}[Schanuel's lemma]\label{lem:schanuel}
Let $X\in\cA$, and let
\[
0\longrightarrow K\longrightarrow P\longrightarrow X
\longrightarrow0
\]
and
\[
0\longrightarrow K'\longrightarrow P'\longrightarrow X
\longrightarrow0
\]
be exact sequences with $P,P'\in\Proj\cA$. Then
\[
K\oplus P'
\cong
K'\oplus P.
\]
More generally, any two $n$-syzygy objects of $X$ become isomorphic
after adding suitable projective direct summands. In particular, they
are isomorphic in $\underline{\cA}$.
\end{lemma}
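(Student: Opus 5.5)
We follow the classical pullback argument, using only the universal properties available in any abelian category with enough projectives.

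\textbf{Step 1 (pullback).} Given the two sequences ending in $X$, form the pullback
\[
E = P\times_X P'
\]
of the epimorphisms $\pi\colon P\to X$ and $\pi'\colon P'\to X$. In an abelian category, the pullback of an epimorphism is an epimorphism, and the kernel is preserved along a pullback. Hence there are two exact sequences
\[
0\to K'\to E\to P\to 0, \qquad 0\to K\to E\to P'\to 0 .
\]
The kernel of $E\to P$ is identified with the kernel of $\pi'$, namely $K'$, and symmetrically for the other sequence. To check these identifications I would use the explicit description $E=\Ker\bigl((\pi,-\pi')\colon P\oplus P'\to X\bigr)$ and a direct diagram chase, or the standard fact that pullback squares induce isomorphisms on kernels.

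\textbf{Step 2 (splitting).} Since $P$ and $P'$ are projective, both sequences split. This gives
\[
K'\oplus P\cong E\cong K\oplus P',
\]
which is the first claim.

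\textbf{Step 3 ($n$-syzygies).} For $n$-syzygy objects I would induct on $n$, proving the stronger statement: if
\[
0\to K\to P_{n-1}\to\cdots\to P_0\to X\to 0
\]
and $0\to K'\to P'_{n-1}\to\cdots\to P'_0\to X\to 0$ are exact, then $K\oplus Q'\cong K'\oplus Q$ for suitable projectives $Q,Q'$. The case $n=0$ is trivial and $n=1$ is Step 1.

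For the inductive step, let $K_1$ and $K_1'$ be the images of $P_1\to P_0$ and $P'_1\to P'_0$. By Step 1, $K_1\oplus P'_0\cong K'_1\oplus P_0$. Now modify the truncated resolutions so that both resolve the same object:
\begin{itemize}
\item add $P'_0$ as a split summand in degree $1$ of the first one, giving
\[
0\to K\to P_{n-1}\to\cdots\to P_1\oplus P'_0\to K_1\oplus P'_0\to 0 ;
\]
\item add $P_0$ in degree $1$ of the second one in the same way.
\end{itemize}
Both are now $(n-1)$-step projective resolutions of isomorphic objects, with kernel still $K$ (respectively $K'$). When $n-1=1$, the added summand sits in degree $0$ and everything is fine. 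The induction hypothesis then gives $K\oplus Q'\cong K'\oplus Q$.

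\textbf{Step 4 (stable category).} Projective summands vanish in $\underline{\cA}$, so $K\cong K\oplus Q'\cong K'\oplus Q\cong K'$ there.

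\textbf{Main obstacle.} The only delicate point is the bookkeeping in Step 3. When a projective is added to degree $1$, the kernel at the top must be unchanged; this holds because we add a split exact complex $P'_0\xrightarrow{=}P'_0$ in degrees $1,0$. Everything else is formal.
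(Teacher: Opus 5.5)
Your proposal is correct and follows essentially the same route as the paper: form the pullback $P\times_X P'$, split both induced sequences using projectivity, then induct by comparing the first syzygies and passing to the kernels. Your explicit bookkeeping in Step~3, where you add the split complex $P_0'\xrightarrow{=}P_0'$ so that both truncated resolutions resolve the same object, fills in the detail that the paper's inductive sketch leaves implicit.
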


\begin{proof}
Let $W=P\times_XP'$ be the pullback. Thus there is a commutative square
\[
\begin{CD}
&&&&0&&0\\
&&&&@VVV@VVV\\
&&&&K@=K\\
&&&&@VVV@VVV\\
0@>>>K'@>>>W@>>>P'@>>>0\\
&&@|@VVV@VVV\\
0@>>>K'@>>>P@>>>X@>>>0\\
&&&&@VVV@VVV\\
&&&&0&&0
\end{CD}
\]
Since $P$ and $P'$ are projective, \[
W\simeq K'\oplus P\simeq K\oplus P',
\]
which proves the first assertion. For the higher assertion, compare the last projective deflations in two $n$-step projective resolutions of $X$, apply the first assertion, and continue inductively to the kernels. At the $n$-th step one obtains an isomorphism after adjoining finite direct sums of the projective objects occurring in the two resolutions.
\end{proof}

\begin{lemma}\label{lem:rotation}(syzygy shifting)
Let
\[
0\longrightarrow X\xrightarrow{u}Y\xrightarrow{v}Z\longrightarrow0
\]
be exact. If $P_Z\onto Z$ is a projective deflation with kernel $\Omega Z$, then there is an exact sequence
\[
0\longrightarrow\Omega Z\longrightarrow X\oplus P_Z
\longrightarrow Y\longrightarrow0.
\]
\end{lemma}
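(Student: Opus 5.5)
The plan is to build the desired sequence as the kernel sequence of a map out of the direct sum $Y\oplus P_Z$, using the pullback construction from the proof of Lemma~\ref{lem:schanuel}. Write $\pi:P_Z\onto Z$ for the given deflation, so that $\Omega Z=\Ker\pi$. Since $P_Z$ is projective and $v$ is an epimorphism, $\pi$ lifts through $v$: there is a morphism $g:P_Z\to Y$ with $vg=\pi$. The surjectivity of $v$ and the projectivity of $P_Z$ are exactly what is needed here.

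Next, form the pullback $W=Y\times_Z P_Z$ of $v$ and $\pi$. Because $v$ is an epimorphism, the projection $W\to P_Z$ is an epimorphism with kernel $\Ker v\cong X$. The resulting short exact sequence $0\to X\to W\to P_Z\to 0$ splits because $P_Z$ is projective. Hence $W\cong X\oplus P_Z$, and a splitting is given concretely by $(g,1):P_Z\to W$. Dually, since $\pi$ is an epimorphism, the projection $W\to Y$ is an epimorphism whose kernel is $\Ker\pi=\Omega Z$. This yields the exact sequence
\[
0\longrightarrow \Omega Z\longrightarrow W\cong X\oplus P_Z\longrightarrow Y\longrightarrow 0 .
\]

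To make the maps explicit, I would check directly that the morphism $(u,g):X\oplus P_Z\to Y$ has the required properties. It is an epimorphism: its cokernel $C$ receives $Y\to C$, which kills $u(X)$ and therefore factors through $v$ via some $Z\to C$. That map vanishes on $\pi(P_Z)=Z$, since $vg=\pi$ and the composite kills $g$. Its kernel consists of the pairs $(x,p)$ with $u(x)=-g(p)$. Applying $v$ gives $\pi(p)=0$, so $p\in\Omega Z$. Conversely, for $p\in\Omega Z$ we have $vg(p)=0$, so $g(p)$ lies in $u(X)$, and $x$ is uniquely determined because $u$ is monic. Thus the kernel is $\Omega Z$, embedded via $p\mapsto(-u^{-1}g(p),p)$. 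In a general abelian category these elementwise arguments should be phrased via the pullback or a generalized-element argument. The cleanest route is to identify the kernel with the pullback of $u$ along $g|_{\Omega Z}$, which is possible because $v\,g|_{\Omega Z}=0$.

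The only delicate point is justifying the element-style reasoning in an arbitrary abelian category. I would avoid it entirely by using the pullback argument above, which needs only two facts. First, in a pullback square the kernels of parallel maps agree. Second, pullbacks of epimorphisms are epimorphisms in an abelian category.
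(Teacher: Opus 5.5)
Your proposal is correct and follows the paper's proof. Both form the pullback $W=Y\times_Z P_Z$, split $0\to X\to W\to P_Z\to 0$ using projectivity of $P_Z$, and read off $0\to\Omega Z\to W\to Y\to 0$ from the other projection, whose kernel is $\Ker\pi$; your explicit identification of the epimorphism as $(u,g)\colon X\oplus P_Z\to Y$ with $vg=\pi$ is a correct extra detail that the paper leaves implicit.
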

\begin{proof}
Form the pullback $W=Y\times_ZP_Z$. It is contained in a pullback diagram
\[
\begin{CD}
0@>>>X@>>>W@>>>P_Z@>>>0\\
@.@|@VVV@VVV\\
0@>>>X@>{u}>>Y@>{v}>>Z@>>>0.
\end{CD}
\]
The upper row splits because $P_Z$ is projective, and therefore $W\simeq X\oplus P_Z$. The other pullback projection fits into an exact sequence
\[
0\longrightarrow\Omega Z\longrightarrow W\longrightarrow Y\longrightarrow0.
\]
After replacing $W$ by $X\oplus P_Z$, this is the required sequence.
\end{proof}
\begin{lemma}\label{lem:iterated-rotation}
Let
\[
0\longrightarrow M_t\longrightarrow M_{t-1}\longrightarrow\cdots
\longrightarrow M_1\longrightarrow M_0\longrightarrow0
\]
be exact and let $s\geq1$. There exist projective objects $Q_1,\ldots,Q_{t-1}$ and an exact sequence
\[
0\longrightarrow\Omega^sM_t\longrightarrow
\Omega^sM_{t-1}\oplus Q_{t-1}\longrightarrow\cdots \Omega^sM_1\oplus Q_{1}
\longrightarrow\Omega^sM_0\longrightarrow0.
\]
\end{lemma}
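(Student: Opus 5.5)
We argue by induction on the length $t$ of the exact sequence, and for fixed $t$ by induction on $s$. The basic tool is a ``horseshoe-type'' version of Lemma~\ref{lem:rotation}: from a short exact sequence $0\to X\to Y\to Z\to 0$ we want a short exact sequence
\[
0\longrightarrow \Omega X\longrightarrow \Omega Y\oplus Q\longrightarrow \Omega Z\longrightarrow 0
\]
with $Q$ projective.

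To get it, choose projective deflations $P_X\onto X$ and $P_Z\onto Z$, lift $P_Z\onto Z$ through $Y\onto Z$ using projectivity of $P_Z$, and obtain a deflation $P_X\oplus P_Z\onto Y$. The snake lemma applied to the morphism of short exact sequences
\[
(0\to P_X\to P_X\oplus P_Z\to P_Z\to0)\longrightarrow(0\to X\to Y\to Z\to0)
\]
yields an exact sequence of kernels
\[
0\to\Omega X\to K\to\Omega Z\to0,
\]
since the cokernels all vanish. Here $K=\Ker(P_X\oplus P_Z\onto Y)$ is a $1$-syzygy object of $Y$. By Lemma~\ref{lem:schanuel}, $K\oplus P_Y\cong\Omega Y\oplus P_X\oplus P_Z$. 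Adding $P_Y$ to the middle term and to $\Omega Z$, via the split sequence $0\to0\to P_Y\to P_Y\to0$, we get
\[
0\to\Omega X\to\Omega Y\oplus Q\to\Omega Z\oplus P_Y\to0 .
\]
The extra projective on the right can be absorbed by replacing the chosen $\Omega Z$ with the syzygy object $\Omega Z\oplus P_Y$. Alternatively, one can simply work throughout with syzygy objects, since the statement only concerns objects up to adding projectives in the middle terms. Iterating this $s$ times gives the case $t=2$ with a single $Q_1$.

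For general $t$, we split the long exact sequence into short exact sequences
\[
0\to C_{j+1}\to M_j\to C_j\to0,\qquad C_0=M_0,\ C_{t-1}=M_t .
\]
Applying the short-sequence version $s$ times to each gives
\[
0\to\Omega^sC_{j+1}\to\Omega^sM_j\oplus Q_j\to\Omega^sC_j\to0 .
\]
Splicing these back together gives the desired long exact sequence with middle terms $\Omega^sM_j\oplus Q_j$.

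The main obstacle is bookkeeping: the syzygy of each $C_j$ must be the same object in the two short sequences where it appears, namely as a quotient in one and as a subobject in the next. Otherwise we cannot splice. To handle this, process the short sequences from left to right, that is, starting from $C_{t-1}=M_t$. In each step, fix the syzygy of the kernel term $C_{j+1}$ to be the one produced in the previous step, and allow the cokernel syzygy $\Omega^sC_j$ to be modified by a projective summand. That modified object is then used as the kernel term in the next step. At the final step $C_0=M_0$, any extra projective summand appearing on $\Omega^sM_0$ is removed. Because it is projective, the map onto it splits, so we can cancel it by deleting it from the previous middle term via a complement. Alternatively, choose the deflations so that no summand is added at that step: use $P_{M_0}$ as the given cover and pull the correction into the middle term.
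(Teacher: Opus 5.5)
\textbf{There is a genuine gap in the final cancellation step.}

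Your one-step construction is correct. The snake lemma gives $0\to\Omega X\to K\to\Omega Z\to0$ with $K=\Ker(P_X\oplus P_Z\onto Y)$, and Schanuel gives $K\oplus P_Y\cong\Omega Y\oplus P_X\oplus P_Z$.

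The problem is how you remove the projective. You split the accumulated projective $R$ off the last middle term $W=\Omega^sM_1\oplus Q_1$. This leaves an object $W'$ with $W'\oplus R\cong\Omega^sM_1\oplus Q_1$. Nothing in your construction makes $W'$ again of the form $\Omega^sM_1\oplus Q_1'$. That would require $R$ to be a summand of $Q_1$, but $R$ is exactly the correction you added.

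For $t=2$, $s=1$ the cancellation literally returns $0\to\Omega X\to K\to\Omega Z\to0$. The Schanuel trick only moves the discrepancy between $K$ and $\Omega Y$ onto an end term, and the cancellation moves it back into the middle. So the argument is circular. The alternative, ``pull the correction into the middle term'', is not an argument. Iterating the one-step version $s$ times has the same defect, since $\Omega(\Omega Y\oplus Q)$ is again only stably isomorphic to $\Omega^2Y$.

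\textbf{What is missing.} You need a reason why a horseshoe $s$-syzygy of $M_j$ is itself isomorphic to $\Omega^sM_j\oplus Q_j$. No manipulation of projective summands can supply this for arbitrary chosen deflations. In a length category, an exact sequence $0\to\Omega X\to\Omega Y\oplus Q\to\Omega Z\to0$ forces $\ell(\Omega Y)\leq\ell(\Omega X)+\ell(\Omega Z)$. This fails for $0\to S\to S\oplus S\to S\to0$ over $\Lambda=\Bbbk[x]/(x^2)$ if $\Omega S=S$ but the chosen deflation onto $S\oplus S$ is $\Lambda^3\onto S\oplus S$: the kernel then has length $4>2$.

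The identification does hold when syzygies are computed from projective covers in a Krull--Schmidt category. There, every deflation $P\onto M$ is, after an automorphism of $P$, of the form $(\pi,0)\colon P(M)\oplus P''\to M$. Hence every $s$-syzygy object of $M$ is $\Omega^sM\oplus Q$ with $Q$ projective. Otherwise the lemma must be read up to projective summands. That weaker reading is all the later uses need, by Lemma~\ref{lem:stable-projective-invariance}(b).

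\textbf{The bookkeeping obstacle does not arise in the paper's argument.} The paper applies the Horseshoe Lemma to each $0\to K_{j+1}\to M_j\to K_j\to0$ using fixed resolutions of the $K_j$. So the end terms are exactly the chosen $\Omega^sK_j$, and consecutive sequences splice without adjustment. Only the middle terms need the identification above, which the paper uses implicitly.

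\textbf{Minor indexing issue.} Your convention $C_0=M_0$ clashes with $0\to C_{j+1}\to M_j\to C_j\to0$, which would force $C_1=0$.
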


\begin{proof}
Decompose the given sequence into short exact sequences
\[
0\to K_{j+1}\to M_j\to K_j\to0
\qquad(0\leq j<t),
\]
where $K_0=0$ and $K_t=M_t$. Applying  Horseshoe Lemma to the above short exact sequences, we get the following exact sequences
\[
0\to \Omega^s K_{j+1}\to \Omega^s M_j\oplus Q_{j} \to \Omega^s K_j\to0
\qquad(0\leq j<t).
\]
Splicing the resulting exact sequences proves the assertion.
\end{proof}

\section{Derived delooping levels in the abelian category }\label{sec:abelian-cleft}

In this section,  we give the definition of derived delooping levels in the abelian category and generalize some properties from module categories  to    abelian categories.

\begin{definition}\label{def:abelian-dell-ddell}
{\rm\cite[Definitions~2.5 and 2.22]{GuoIgusa2025}}
Let $X\in\cA$ and $k\geq1$.
\begin{enumerate}[$(a)$]
\item The $k$-delooping level of $X$ is
\[
k\text{-}\dell_{\cA}X
=
\inf\{n\geq0\mid
\Omega_{\cA}^{n}X\text{ is a stable retract of }
\Omega_{\cA}^{n+k}Y
\text{ for some }Y\in\underline{\cA}\}.
\]
\item The $k$-derived delooping level of $X$ is
\[
\begin{split}
k\text{-}\ddell_{\cA}X=\inf\{m\geq0\mid{}&
\text{there exist }0\leq t\leq m\text{ and an exact sequence}\\
&0\to C_t\to C_{t-1}\to\cdots\to C_0\to X\to0,\\
&(i+k)\text{-}\dell_{\cA}C_i\leq m-i
\text{ for }0\leq i\leq t\}.
\end{split}
\]
\end{enumerate}
For $k=1$ we write $\dell_{\cA}X$ and $\ddell_{\cA}X$. 
\end{definition}

\begin{remark}\label{rem:relation-known-definition}
When $\cA=\modcat\text{-}\Lambda$ for a finite-dimensional algebra $\Lambda$, Definition~\ref{def:abelian-dell-ddell} is precisely the definition of Guo--Igusa \cite{GuoIgusa2025}.  The case $t=0$ shows
\[
k\text{-}\ddell_{\cA}X\leq k\text{-}\dell_{\cA}X.
\]
Moreover, every first syzygy has ordinary delooping level zero.
\end{remark}

\begin{definition}\label{def:global-derived-delooping}
Let $\cA$ be a length category with finitely many isomorphism classes of simple objects. For $k\geq1$ set
\[
k\text{-}\ddell(\cA)
=
\sup\{k\text{-}\ddell_{\cA}S\mid S\in\mathcal{S}(\cA)\}.
\]
We write $\ddell(\cA)=1\text{-}\ddell(\cA)$. If $\Lambda$ is a finite-dimensional algebra and $\cA=\modcat \text{-}\Lambda$, we abbreviate these invariants to $k\text{-}\ddell \Lambda$ and $\ddell \Lambda$, respectively.
\end{definition}

\begin{lemma}\label{lem:stable-projective-invariance}
Let $X,Y\in\cA$ and  $k\geq1$.
\begin{enumerate}[$(a)$]
\item If $X$ is a stable retract of $Y$, then
\[
k\text{-}\dell_{\cA}X\leq k\text{-}\dell_{\cA}Y.
\]
\item If $X\oplus P\simeq Y\oplus Q$ for projective objects $P,Q$, then
\[
k\text{-}\dell_{\cA}X=k\text{-}\dell_{\cA}Y
\quad\text{and}\quad
k\text{-}\ddell_{\cA}X=k\text{-}\ddell_{\cA}Y.
\]

\end{enumerate}
\end{lemma}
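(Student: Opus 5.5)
For part (a), we unwind the definition of $k$-$\dell$. Suppose $\underline{p}\,\underline{s}=\underline{1}_X$ in $\underline{\cA}$. The syzygy functor $\Omega_{\cA}$ is an additive endofunctor of $\underline{\cA}$, so applying $\Omega_{\cA}^n$ gives $\Omega^n\underline{p}\circ\Omega^n\underline{s}=\underline{1}_{\Omega^nX}$. Hence $\Omega^n X$ is a stable retract of $\Omega^n Y$ for every $n\geq0$.

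Now let $n=k\text{-}\dell_{\cA}Y$; if this is infinite there is nothing to prove. Then $\Omega^nY$ is a stable retract of $\Omega^{n+k}Z$ for some $Z$. A composite of stable retractions is again a stable retraction: compose the section maps and the retraction maps in $\underline{\cA}$. So $\Omega^nX$ is a stable retract of $\Omega^{n+k}Z$, which gives $k\text{-}\dell_{\cA}X\leq n$.

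For part (b), the isomorphism $X\oplus P\simeq Y\oplus Q$ makes $X\cong Y$ in $\underline{\cA}$, because the inclusion and projection of $X$ into and out of $X\oplus P$ are mutually inverse stable isomorphisms. Each of $X$ and $Y$ is therefore a stable retract of the other, and part (a), applied in both directions, gives equality of $k$-$\dell$.

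The equality for $k$-$\ddell$ is the only step that needs care, since the definition involves an actual exact sequence ending in $X$ rather than a stable statement. By symmetry it suffices to show $k\text{-}\ddell_{\cA}Y\leq k\text{-}\ddell_{\cA}X$. Assume $m=k\text{-}\ddell_{\cA}X$ is finite and choose a witnessing sequence
\[
0\to C_t\to\cdots\to C_0\to X\to0 .
\]
We pass to $Y$ in two steps.

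First, add $P$ to the last two terms:
\[
0\to C_t\to\cdots\to C_1\to C_0\oplus P\to X\oplus P\to0 .
\]
When $t=0$ this reads $0\to C_0\oplus P\to X\oplus P\to0$. The new term $C_0\oplus P$ is stably isomorphic to $C_0$, so by the $k$-$\dell$ part already proved its $(0+k)$-$\dell$ is unchanged, and the bounds $(i+k)\text{-}\dell_{\cA}C_i\leq m-i$ still hold. This shows $k\text{-}\ddell_{\cA}(X\oplus P)\leq m$.

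Second, we need to remove $Q$ from $Y\oplus Q\simeq X\oplus P$. Compose the surjection $C_0\oplus P\to Y\oplus Q$ with the projection onto $Y$. Its kernel $K'$ fits into a short exact sequence $0\to K\to K'\to Q\to0$, where $K$ is the old kernel. Since $Q$ is projective this sequence splits, so $K'\cong K\oplus Q$.

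We then need an exact sequence $\cdots\to C_1'\to C_0\oplus P\to Y\to0$ whose image in $C_0\oplus P$ is exactly $K\oplus Q$. Take $C_1'=C_1\oplus Q$, mapping $Q$ identically onto the $Q$-summand of $K'$. We then check that the rest of the sequence stays exact, with $C_i$ unchanged for $i\geq2$. The term $C_1\oplus Q$ is again stably isomorphic to $C_1$, so the required $(1+k)$-$\dell$ bound is preserved.

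If $t=0$, then $C_0\oplus P\to Y$ has nonzero kernel $Q$. In that case we lengthen the sequence by one term, taking $C_1=Q$. Since $Q$ is projective, $\Omega^nQ$ is a zero object of $\underline{\cA}$ for every $n$, so $(1+k)\text{-}\dell_{\cA}Q=0\leq m-1$ provided $m\geq1$. If $m=0$ we instead use the $t=0$ witness directly: $Y$ is stably isomorphic to $X$, so $k\text{-}\dell_{\cA}Y=k\text{-}\dell_{\cA}X=0$.

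This bookkeeping with projective summands is the main obstacle, and the step where the argument needs the most checking is the exactness of the modified sequence at $C_0\oplus P$ and at $C_1\oplus Q$.
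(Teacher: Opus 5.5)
Your proof is correct, and the exactness you flag does hold. If $s\colon Q\to K'$ is a splitting, then $K'=K\oplus s(Q)$ is an internal direct sum, so $(d_1,s)\colon C_1\oplus Q\to K'$ is an epimorphism with kernel $\ker d_1\oplus 0=\operatorname{Im}(d_2,0)$. Part (a), the $\dell$-equality in (b), and the first half of the $\ddell$-argument (replacing $C_0\to X$ by $C_0\oplus P\to X\oplus P$) coincide with the paper.

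The genuine difference is in how the projective summand is removed.

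\textbf{The paper's route.} It proves $k\text{-}\ddell_{\cA}(X\oplus P)=k\text{-}\ddell_{\cA}X$ for a single projective $P$, and then chains through $X\oplus P\cong Y\oplus Q$. For the inequality $k\text{-}\ddell_{\cA}X\leq k\text{-}\ddell_{\cA}(X\oplus P)$, it pulls back the epimorphism $f\colon D_0\to X\oplus P$ along the inclusion $X\to X\oplus P$. This gives $D_0'$ with the same kernel $\ker f$ and $D_0\cong D_0'\oplus P$. So the paper absorbs the projective by shrinking the zeroth term, and the length and all other terms of the witness stay unchanged.

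\textbf{Your route.} You project onto $Y$, which enlarges the kernel by $Q$, and you absorb $Q$ by enlarging the first term to $C_1\oplus Q$. This is equally valid, but it forces a separate treatment of $t=0$, where there is no $C_1$ to enlarge. In that case your fallback on the $\dell$-equality already works for every $m$, not only $m=0$. Indeed, $k\text{-}\dell_{\cA}Y=k\text{-}\dell_{\cA}X\leq m$, and the trivial witness $0\to Y\to Y\to 0$ then gives $k\text{-}\ddell_{\cA}Y\leq m$. So the lengthening by $C_1=Q$ is unnecessary. The paper's pullback gives a uniform argument with no case split.
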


\begin{proof}
Part~(a) follows directly from the definition of the $k$-delooping
level in the stable category. It also follows that adding a projective
summand does not change the $k$-delooping level. Hence the first
equality in Part~(b) is immediate.

It remains to prove the assertion for the $k$-derived delooping level.
It is enough to show that, for every projective object $P$, $
k\text{-}\ddell_{\cA}(X\oplus P)
=
k\text{-}\ddell_{\cA}X.
$ Put $
m=k\text{-}\ddell_{\cA}X$ 
and suppose first that $m<\infty$. By definition, there is an integer
$t\leq m$ and an exact sequence
\[
0\longrightarrow C_t
\longrightarrow\cdots
\longrightarrow C_0
\xrightarrow{d_0}X
\longrightarrow0
\]
such that $(i+k)\text{-}\dell_{\cA}C_i
\leq m-i$ $(0\leq i\leq t).$ 
Then
\[
0\longrightarrow C_t
\longrightarrow\cdots
\longrightarrow C_0\oplus P
\xrightarrow{d_0\oplus1_P}
X\oplus P
\longrightarrow0
\]
is exact. Since $C_0$ and $C_0\oplus P$ are isomorphic in the stable
category, $k\text{-}\dell_{\cA}(C_0\oplus P)
=
k\text{-}\dell_{\cA}C_0.$
Thus the preceding sequence satisfies the required delooping-level
bounds and gives
\[
k\text{-}\ddell_{\cA}(X\oplus P)
\leq
k\text{-}\ddell_{\cA}X.
\]

Conversely, put $n=k\text{-}\ddell_{\cA}(X\oplus P)$ 
and suppose that $n<\infty$. Choose an integer $s\leq n$ and an exact
sequence
\[
0\longrightarrow D_s
\longrightarrow\cdots
\longrightarrow D_0
\xrightarrow{f}
X\oplus P
\longrightarrow0
\]
such that $(i+k)\text{-}\dell_{\cA}D_i
\leq n-i
\qquad
(0\leq i\leq s).$
Let $D'_0$ be the pullback of $f:D_0\to X\oplus P$ along the canonical
inclusion $X\to X\oplus P$. We obtain a commutative diagram with exact
rows and columns:
\[
\begin{CD}
&&&&0&&0\\
&&&&@VVV@VVV\\
0@>>>\Ker f@>>>D'_0@>>>X@>>>0\\
&&@|@VVV@VVV\\
0@>>>\Ker f@>>>D_0@>>>X\oplus P@>>>0\\
&&&&@VVV@VVV\\
&&&&P@=P\\
&&&&@VVV@VVV\\
&&&&0&&0.
\end{CD}
\]
Since $P$ is projective, the exact sequence
\[
0\longrightarrow D'_0
\longrightarrow D_0
\longrightarrow P
\longrightarrow0
\]
splits. Hence $D_0\cong D'_0\oplus P.$ 
By the already established invariance of $k$-delooping levels under
projective summands, $k\text{-}\dell_{\cA}D'_0
=
k\text{-}\dell_{\cA}D_0.$ 
Splicing the upper row with the preceding part of the original exact
sequence gives
\[
0\longrightarrow D_s
\longrightarrow\cdots
\longrightarrow D_1
\longrightarrow D'_0
\longrightarrow X
\longrightarrow0.
\]
All the required delooping-level bounds remain unchanged. Therefore
\[
k\text{-}\ddell_{\cA}X
\leq n
=
k\text{-}\ddell_{\cA}(X\oplus P).
\]
Combining the two inequalities yields
\[
k\text{-}\ddell_{\cA}(X\oplus P)
=
k\text{-}\ddell_{\cA}X.
\]

Finally, if $X\oplus P\cong Y\oplus Q$ with $P$ and $Q$ projective,
then
\[
\begin{aligned}
k\text{-}\ddell_{\cA}X
&=
k\text{-}\ddell_{\cA}(X\oplus P)\\
&=
k\text{-}\ddell_{\cA}(Y\oplus Q)\\
&=
k\text{-}\ddell_{\cA}Y.
\end{aligned}
\]
This proves Part~(b).
\end{proof}

\begin{lemma}\label{lem:ddell-syzygy}
Let $\mathcal A$ be an abelian category with enough projective
objects. For every $M\in\mathcal A$ and every $k\geq1$,
\[
k\text{-}\ddell_{\mathcal A}(\Omega M)
\leq
k\text{-}\ddell_{\mathcal A}M.
\]
Consequently, $k\text{-}\ddell_{\mathcal A}M<\infty
\Longrightarrow
k\text{-}\ddell_{\mathcal A}(\Omega M)<\infty.$
\end{lemma}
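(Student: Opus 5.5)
The plan is to take an optimal exact sequence for $M$ and apply $\Omega$ to every term using Lemma~\ref{lem:iterated-rotation}. This requires that $\Omega$ does not increase $k$-delooping levels, which is the one new ingredient.

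\textbf{Step 1: a monotonicity claim.} For every $X\in\cA$ and every $j\geq1$,
\[
j\text{-}\dell_{\cA}(\Omega X)\leq j\text{-}\dell_{\cA}X.
\]
Suppose $n=j\text{-}\dell_{\cA}X<\infty$. Then $\Omega^n X$ is a stable retract of $\Omega^{n+j}Y$ for some $Y$, via $\underline{s}$ and $\underline{p}$ with $\underline{p}\,\underline{s}=\underline{1}$. Since $\Omega_{\cA}$ is an additive endofunctor of $\underline{\cA}$, applying it to $\underline{s}$ and $\underline{p}$ shows that $\Omega^{n+1}X=\Omega^n(\Omega X)$ is a stable retract of $\Omega^{n+1+j}Y=\Omega^{n+j}(\Omega Y)$. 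Hence $j\text{-}\dell_{\cA}(\Omega X)\leq n$. Lemma~\ref{lem:schanuel} together with Lemma~\ref{lem:stable-projective-invariance}(b) ensures that using any syzygy object in place of the chosen syzygy does not matter.

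\textbf{Step 2: rotate an optimal sequence.} Put $m=k\text{-}\ddell_{\cA}M$; the case $m=\infty$ is trivial, so assume $m<\infty$. Choose $t\leq m$ and an exact sequence
\[
0\to C_t\to\cdots\to C_0\to M\to0
\]
with $(i+k)\text{-}\dell_{\cA}C_i\leq m-i$ for $0\leq i\leq t$. If $t=0$, then $C_0\cong M$, so $\Omega M\cong\Omega C_0$ in $\underline{\cA}$, and the claim follows from Step~1 and Lemma~\ref{lem:stable-projective-invariance}(b). Otherwise apply Lemma~\ref{lem:iterated-rotation} with $s=1$ to the sequence of length $t+1$ whose last term is $M_0=M$. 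This yields projective objects $Q_0,\ldots,Q_{t-1}$ and an exact sequence
\[
0\to\Omega C_t\to\Omega C_{t-1}\oplus Q_{t-1}\to\cdots\to\Omega C_0\oplus Q_0\to\Omega M\to0.
\]

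\textbf{Step 3: check the bounds.} Each term $\Omega C_i\oplus Q_i$ has the same $(i+k)$-delooping level as $\Omega C_i$ by Lemma~\ref{lem:stable-projective-invariance}(b). By Step~1 this is at most $(i+k)\text{-}\dell_{\cA}C_i\leq m-i$, and the same bound holds for $\Omega C_t$ directly. So the rotated sequence witnesses $k\text{-}\ddell_{\cA}(\Omega M)\leq m$, and the finiteness consequence is immediate.

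The only delicate point is the bookkeeping in Lemma~\ref{lem:iterated-rotation}: it must produce exactly $t$ projective summands in positions $0$ through $t-1$, with none added to $\Omega C_t$ or to $\Omega M$. This should follow directly from the horseshoe construction in its proof.
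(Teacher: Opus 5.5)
Your proposal is correct and follows the paper's proof essentially verbatim: rotate a witness for $M$ using Lemma~\ref{lem:iterated-rotation} with $s=1$, then combine the monotonicity $j\text{-}\dell_{\cA}(\Omega X)\leq j\text{-}\dell_{\cA}X$ with invariance under projective summands. The only differences are that your Step~1 actually proves that monotonicity (the paper just asserts it), and your closing bookkeeping worry is unnecessary, since any projective summands attached to $\Omega C_t$ or $\Omega M$ would be absorbed by Lemma~\ref{lem:stable-projective-invariance}(b) anyway.
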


\begin{proof}
Put $m=k\text{-}\ddell_{\mathcal A}M<\infty.$ 
Choose an $m$-witness
\[
0\to C_n\to C_{n-1}\to\cdots\to C_0\to M\to0,
\qquad n\leq m,
\]
such that $(i+k)\text{-}\dell_{\mathcal A}C_i\leq m-i$  $(0\leq i\leq n).$
By Lemma \ref{lem:iterated-rotation}, taking 1-syzygies 
gives an exact sequence
\[
0\to\Omega C_n
\to\Omega C_{n-1}\oplus P_{n-1}
\to\cdots
\to\Omega C_0\oplus P_0
\to\Omega M\to0,
\]
where all $P_i$ are projective.

For every object $X$ and every $q\geq1$, one has $q\text{-}\dell_{\mathcal A}(\Omega X)
\leq q\text{-}\dell_{\mathcal A}X.$ 
Since adding a projective summand does not change the delooping
level, it follows that
\[
\begin{aligned}
(i+k)\text{-}\dell_{\mathcal A}
   (\Omega C_i\oplus P_i)
&=(i+k)\text{-}\dell_{\mathcal A}(\Omega C_i)\\
&\leq(i+k)\text{-}\dell_{\mathcal A}C_i\\
&\leq m-i.
\end{aligned}
\]
Thus the displayed exact sequence is an $m$-witness for
$\Omega M$, and hence
\[
k\text{-}\ddell_{\mathcal A}(\Omega M)\leq m.
\]
\end{proof}

\begin{proposition}\label{prop:extension-estimate}
Let $0\to X\to Y\to Z\to0$ be exact. Then, for every $k\geq1$,
\[
k\text{-}\ddell_{\cA}Y
\leq k\text{-}\ddell_{\cA}X+k\text{-}\ddell_{\cA}Z+1,
\]
and
\[
k\text{-}\ddell_{\cA}X
\leq k\text{-}\ddell_{\cA}Y+k\text{-}\ddell_{\cA}Z+1.
\]
\end{proposition}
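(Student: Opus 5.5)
We prove both inequalities by building explicit witnesses from witnesses for the two known terms, gluing them via the horseshoe-type constructions of Lemmas~\ref{lem:rotation} and~\ref{lem:iterated-rotation}. Two elementary facts will be used repeatedly. First, $q$-$\dell$ is monotone in $q$: a stable retract of $\Omega^{n+q+1}Y$ is a stable retract of $\Omega^{n+q}(\Omega Y)$, so $(q+1)$-$\dell_{\cA}C\leq q$-$\dell_{\cA}C$ would be the wrong direction. What we actually need is the opposite, which follows from the definition: $q$-$\dell_{\cA}C\le(q+1)$-$\dell_{\cA}C$. Second, Remark~\ref{rem:relation-known-definition} gives $(i+k)$-$\dell_{\cA}$ of a syzygy-type object, and Lemma~\ref{lem:stable-projective-invariance} lets us ignore projective summands.

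\emph{First inequality.} Put $a=k\text{-}\ddell_{\cA}X$ and $c=k\text{-}\ddell_{\cA}Z$, with witnesses
\[
0\to A_s\to\cdots\to A_0\to X\to0,\qquad 0\to B_t\to\cdots\to B_0\to Z\to0.
\]
Pull back $0\to X\to Y\to Z\to0$ along $B_0\to Z$ to obtain $0\to X\to E_0\to B_0\to0$ and $0\to K\to E_0\to Y\to 0$, where $K=\Ker(B_0\to Z)$. Then pull back along $A_0\to X$: the object $C_0=A_0\times_X E_0$ fits into $0\to A_0\to C_0\to B_0\to0$ and surjects onto $E_0$, hence onto $Y$. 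We then iterate this horseshoe-like construction degreewise, forming $C_i$ as an extension of $B_i$ by a suitably shifted $A_{i-1}$ (a mapping-cone style totalization, which is where the $+1$ comes from). In each degree $C_i$ is an extension, and we estimate $(i+k)$-$\dell_{\cA}C_i$ using the fact that delooping level behaves well on the relevant pieces.

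\emph{Main obstacle.} Ordinary $\dell$ is not subadditive on extensions; this is exactly why $\ddell$ was introduced. So the extension $C_i$ must not be bounded directly. Instead we keep the pieces separate, using a longer exact sequence: the mapping cone of the map of complexes $A_\bullet\to E_\bullet$ has terms $B_i\oplus A_{i-1}$, which are \emph{direct sums}, and $\dell$ of a direct sum is the maximum of the two (a stable retract of $\Omega^{n+q}(Y_1\oplus Y_2)$ for each summand). Concretely, we aim for an exact sequence
\[
0\to\cdots\to B_1\oplus A_0'\to B_0\to Y\to0
\]
in which $A_{i-1}$ sits in degree $i$, so its index shifts by one. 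This gives $(i+k)$-$\dell_{\cA}A_{i-1}\le (i-1+k)$-$\dell_{\cA}A_{i-1}\le a-(i-1)$, which is at most $a+c+1-i$, and similarly $(i+k)$-$\dell_{\cA}B_i\le c-i$. The length of the sequence is at most $\max(s+1,t)\le a+c+1$. To produce this sequence we plan to use the pullback $E_0$ and Lemma~\ref{lem:rotation}-style rotations: the sequence $0\to X\to E_0\to B_0\to 0$ together with $0\to K\to E_0\to Y\to0$ yields $0\to X\to E_0\to Y\to 0$ extended. We would then verify exactness carefully, possibly replacing $B_0$ by $B_0\oplus A_0$ and $\Ker$ terms accordingly.

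\emph{Second inequality.} This reduces to the first. Applying Lemma~\ref{lem:rotation} gives $0\to\Omega Z\to X\oplus P_Z\to Y\to0$; this is the wrong shape, so instead we use the pullback sequence $0\to \Omega Z\to \Omega Z\oplus?\ldots$. More directly, $X$ is the kernel of $Y\to Z$, and a syzygy-shifting argument gives an exact sequence $0\to\Omega Z\to W\to Y\to 0$ with $W\cong X\oplus P_Z$. Applying the first inequality, together with Lemma~\ref{lem:ddell-syzygy} (which gives $k$-$\ddell\,\Omega Z\le c$) and Lemma~\ref{lem:stable-projective-invariance}(b), yields $k\text{-}\ddell_{\cA}X=k\text{-}\ddell_{\cA}(X\oplus P_Z)\le k\text{-}\ddell_{\cA}Y+k\text{-}\ddell_{\cA}Z+1$.
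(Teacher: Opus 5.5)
Your first inequality is not actually proved, and the plan as written cannot work, for two concrete reasons.

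First, the key estimate $(i+k)\text{-}\dell_{\cA}A_{i-1}\le(i-1+k)\text{-}\dell_{\cA}A_{i-1}$ is backwards. You noted yourself that $q\text{-}\dell_{\cA}C\le(q+1)\text{-}\dell_{\cA}C$. So moving an unchanged witness term $A_{i-1}$ from degree $i-1$ up to degree $i$ demands a \emph{stronger} delooping condition than the witness for $X$ gives you, and $(i+k)\text{-}\dell_{\cA}A_{i-1}$ need not even be finite. What does shift the index for free is replacing a term by a syzygy. If $\Omega^nC$ is a stable retract of $\Omega^{n+q}N$, then $\Omega^n(\Omega^sC)$ is a stable retract of $\Omega^{n+q+s}N$, so $(q+s)\text{-}\dell_{\cA}(\Omega^sC)\le q\text{-}\dell_{\cA}C$.

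Second, the sequence $\cdots\to B_1\oplus A_0'\to B_0\to Y\to0$ does not exist in general, because $B_0\to Z$ need not lift along $Y\to Z$. For instance, if $t=0$ and $B_0=Z$, a lift would split $0\to X\to Y\to Z\to0$. A cone of a chain map $A_\bullet\to E_\bullet$ has terms $E_i\oplus A_{i-1}$, not $B_i\oplus A_{i-1}$. Moreover, $E_0=Y\times_ZB_0$ is an extension of $B_0$ by $X$, which is exactly the kind of term you said must not be bounded directly. Your orientation is reversed: the $X$-witness maps to $Y$ through $X\to Y$ and belongs in low degrees, while $Z$ can only be covered there by projectives.

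The paper's proof does exactly this, in two steps.
\begin{itemize}
\item In degrees $0,\dots,s$ it uses $A_i\oplus P_i$, where $P_\bullet\to Z$ is a projective resolution. The $P_i$ lift through $Y\to Z$ and do not change delooping levels. An iterated pullback and Snake Lemma argument gives exactness, and the kernel at degree $s$ is $\Omega^{s+1}Z$, since the $X$-witness ends with a monomorphism.
\item It then splices on the $(s+1)$-st syzygy of the $Z$-witness, $0\to\Omega^{s+1}B_t\to\cdots\to\Omega^{s+1}B_0\oplus Q_0\to\Omega^{s+1}Z\to0$, from Lemma~\ref{lem:iterated-rotation}.
\end{itemize}
The term $\Omega^{s+1}B_j$ sits in degree $s+1+j$. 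It satisfies $(s+1+j+k)\text{-}\dell_{\cA}(\Omega^{s+1}B_j)\le(j+k)\text{-}\dell_{\cA}B_j\le c-j\le a+c-s-j$, using $s\le a$. The length is $s+1+t\le a+c+1$, and this is where the $+1$ comes from.

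Your reduction of the second inequality to the first is correct and is the paper's argument. You apply the first inequality to $0\to\Omega Z\to X\oplus P_Z\to Y\to0$, then use Lemma~\ref{lem:ddell-syzygy} and Lemma~\ref{lem:stable-projective-invariance}(b).
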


\begin{proof}
There is nothing to prove if one of the two terms on the right is infinite. Put
\[
m_1=k\text{-}\ddell X,
\qquad
m_2=k\text{-}\ddell Z.
\]
Choose witnesses of lengths $p\leq m_1$ and $q\leq m_2$:
\begin{equation}\label{eq:X}
0\to D_p\xrightarrow{d_p}\cdots\to D_0\xrightarrow{d_0} X\to0,
\qquad
(i+k)\text{-}\dell D_i\leq m_1-i,
\end{equation}
\begin{equation}\label{eq:Z}
0\to E_q\to\cdots\to E_0\to Z\to0,
\qquad
(j+k)\text{-}\dell E_j\leq m_2-j.
\end{equation}
We also choose the first $p+1$ steps of a projective resolution of $Z$
\[
0 \to \Omega^{p+1}Z \xrightarrow{\pi}P_{p} \xrightarrow{\pi_{p}} \cdots \to P_1 \xrightarrow{\pi_1} P_0 \xrightarrow{\pi_0} Z \to 0,
\]
and another exact sequence by taking($p+1$)-syzygy of  \eqref{eq:Z}
\[
0 \to \Omega^{p+1}E_{q} \to \Omega^{p+1}E_{q-1}\oplus Q_{q-1} \to \cdots \to \Omega^{p+1}E_0\oplus Q_0  \xrightarrow{\partial}  \Omega^{p+1}Z \to 0
\]
where all $Q_i$ are projective objects. Now we prove that there is a  long exact sequence
\begin{equation}\label{eq:splicing}
\begin{split}
0\to&\Omega^{p+1}E_q\to \Omega^{p+1}E_{q-1}\oplus Q_{q-1}\to\cdots
\to\Omega^{p+1}E_0\oplus Q_0\\
& \xrightarrow{f}  D_p\oplus P_p\to\cdots\to D_0\oplus P_0\to Y\to0.
\end{split}
\end{equation}
where $f$ maps into $P_{p}$, which can be factored as $\Omega^{p+1}E_0\oplus Q_{0} \stackrel{\partial} \to \Omega^{p+1}Z \stackrel{\pi} \hookrightarrow P_{p}$. In fact, we may proceed by induction for the proof. Consider the following commutative exact diagram
$$\begin{tikzcd}
& && P_0 \arrow[d, "\pi_0"] \arrow[ld, dashed, "\gamma"]  &\\
0 \arrow[r] & X \arrow[r, "\alpha"] & Y \arrow[r, "\beta"] & Z \arrow[r] & 0,
\end{tikzcd}$$
Since $P_0$ is projective, such a lift $\gamma$ exists. We therefore have
the following commutative exact diagram
$$\begin{tikzcd}
&0\arrow[d]&0\arrow[d]&0\arrow[d]&\\
0\arrow[r]&K_1\arrow[r]\arrow[d]&\text{Ker}( \alpha d_0 \ \gamma)\arrow[r]\arrow[d]&\Omega Z\arrow[r]\arrow[d]&0\\
0\arrow[r]&D_0\arrow[r,"\begin{pmatrix}
   1\\
  0
  \end{pmatrix}"]\arrow[d, "d_0"] &D_0\oplus P_0\arrow[r,"(0\ 1 )"]\arrow[d,"(\alpha d_0 \ \gamma)"] & P_0 \arrow[d, "\pi_0"] \arrow[r]  &0.\\
0 \arrow[r] & X \arrow[r, "\alpha"] \arrow[d]& Y \arrow[r, "\beta"] \arrow[d]& Z \arrow[r]\arrow[d] & 0\\
&0&0&0&
\end{tikzcd}
$$
where the surjectivity of $( \alpha d_0 \ \gamma)$ follows from the Snake
Lemma. Repeating this pullback--kernel construction with the successive
maps $D_i\to D_{i-1}$ and $P_i\to P_{i-1}$ gives, by induction on $i$, the
exact sequence \eqref{eq:splicing}. The terms $D_i\oplus P_i$ occur in
positions $i$, and the term $\Omega^{p+1}E_j\oplus Q_j$ occurs in position
$p+1+j$.
So, for $k$-ddell $Y \leq m_1 + m_2 + 1$, it remains to check
\noindent\begin{enumerate}
    \item $(i+k)$-$\operatorname{dell} (D_i\oplus P_i) \leq m_1 + m_2 + 1 - i$ for $i = 0, \dots, p$,
    \item $(j+ p + 1 + k)$-$\operatorname{dell}  (\Omega^{p+1}E_j\oplus Q_j)\leq m_1 + m_2 - j - p$ for $j= 0, \dots, q$.
\end{enumerate}
Due to the conditions in \eqref{eq:X}, \[
(i+k)\text{-}\dell(D_i\oplus P_i)
\leq m_1-i\leq m_1+m_2+1-i.
\]
For the second statement,  choose $a_j\leq m_2-j$, an object $N_j$, and a stable retraction
\[
\Omega^{a_j}E_j\longrightarrow
\Omega^{a_j+j+k}N_j\longrightarrow
\Omega^{a_j}E_j.
\]
Since $a_j\leq m_2-j$, the integer $s_j=m_1+m_2-j+1-a_j$ 
is nonnegative. Applying $\Omega^{s_j}$ to this retraction gives
\[
\Omega^{m_1+m_2-j+1}E_j
\quad\text{as a stable retract of}\quad
\Omega^{m_1+m_2+k+1}N_j.
\]
Rewriting the two exponents gives
\[
\Omega^{m_1+m_2-p-j}(\Omega^{p+1}E_j)
\quad\text{as a stable retract of}\quad
\Omega^{m_1+m_2-p-j+p+1+j+k}N_j.
\]
The required inequality follows from Lemma~\ref{lem:stable-projective-invariance}. The exact sequence \ref{eq:splicing} therefore witnesses
\[
k\text{-}\ddell Y\leq m_1+m_2+1.
\]

It remains to  prove the second inequality.   By the syzygy-shifting lemma, there exists a projective
object $P_Z$ and an exact sequence
\[
0\longrightarrow\Omega Z
\longrightarrow X\oplus P_Z
\longrightarrow Y
\longrightarrow0.
\]
Applying the first inequality to this exact sequence yields
\[
k\text{-}\ddell_{\cA}(X\oplus P_Z)
\leq
k\text{-}\ddell_{\cA}(\Omega Z)
+k\text{-}\ddell_{\cA}Y+1.
\]
Hence, 
\[
\begin{aligned}
k\text{-}\ddell_{\cA}X
&=
k\text{-}\ddell_{\cA}(X\oplus P_Z)\\
&\leq
k\text{-}\ddell_{\cA}(\Omega Z)
+k\text{-}\ddell_{\cA}Y+1\\
&\leq
k\text{-}\ddell_{\cA}Z
+k\text{-}\ddell_{\cA}Y+1.
\end{aligned}
\]
as desired.
\end{proof}

\begin{lemma}\label{lem:subobject-estimate}
If $X\into Y$, then $\ddell_{\cA}X\leq\ddell_{\cA}Y+1.$
\end{lemma}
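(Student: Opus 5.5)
Given $X\into Y$, I would put $Z=Y/X$ and work with the short exact sequence $0\to X\to Y\to Z\to0$. The obvious first move is the second inequality of Proposition~\ref{prop:extension-estimate}. It gives $\ddell X\leq\ddell Y+\ddell Z+1$, which is weaker than the claim. So I would use a direct construction that exploits the special role of $k=1$. If $\ddell Y=\infty$ there is nothing to prove, so assume $m=\ddell_{\cA}Y<\infty$.

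The key observation is that the quotient $Z$ always has a cheap witness for ``$\ddell$-type'' purposes once we pass to a syzygy. Indeed, Lemma~\ref{lem:rotation} applied to $0\to X\to Y\to Z\to0$ gives an exact sequence
\[
0\longrightarrow\Omega Z\longrightarrow X\oplus P_Z\longrightarrow Y\longrightarrow0 .
\]
Moreover, $\Omega Z$ is a first syzygy, so $q$-$\dell\,\Omega Z\le$ something controlled. By Remark~\ref{rem:relation-known-definition}, every first syzygy has ordinary ($1$-)delooping level $0$, but this sequence has $\Omega Z$ on the wrong side.

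So instead I would build a witness for $X\oplus P_Z$ directly. Take an $m$-witness $0\to C_t\to\cdots\to C_0\to Y\to0$ with $(i+1)$-$\dell C_i\le m-i$. I would pull it back along $X\oplus P_Z\to Y$ as in the proof of Proposition~\ref{prop:extension-estimate}. Set $C_0'=C_0\times_Y(X\oplus P_Z)$. There is an exact sequence $0\to\Omega Z\to C_0'\to C_0\to 0$, and the kernels of $C_0'\to X\oplus P_Z$ and $C_0\to Y$ agree. This yields a sequence $0\to C_t\to\cdots\to C_1\to C_0'\to X\oplus P_Z\to0$.

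Now I must bound $1$-$\dell C_0'$. Rather than trying that, I would shift: use the syzygy-shifting Lemma~\ref{lem:rotation} on $0\to\Omega Z\to C_0'\to C_0\to0$ to get $0\to\Omega C_0\to\Omega Z\oplus P\to C_0'\to0$. This lengthens the witness by one step:
\[
0\to C_t\to\cdots\to C_1\oplus ?\to \Omega Z\oplus P\to X\oplus P_Z\to0 .
\]
Here the new position-$0$ term $\Omega Z\oplus P$ has $1$-$\dell=0$. The position-$(i+1)$ terms come from $C_i$, and for those the inequality $(i+2)$-$\dell\le m+1-(i+1)$ must be checked, using $(i+1)$-$\dell C_i\le m-i$ and Lemma~\ref{lem:ddell-syzygy}-style shifting. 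Each shift by one position costs exactly $1$ in the bound. Finally, Lemma~\ref{lem:stable-projective-invariance}(b) removes $P_Z$.

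The main obstacle is the splicing. We have to merge $\Omega C_0$ with $C_1$ into a single exact complex while keeping the correct delooping index. The condition at position $i+1$ requires $(i+2)$-$\dell$, which is a \emph{larger} $k$ than we have for $C_i$. I expect to handle this by replacing $C_i$ with $\Omega C_i$ throughout, via Lemma~\ref{lem:iterated-rotation}. The point is that $(q+1)$-$\dell\,\Omega C\le q$-$\dell C+1$ follows from the definition, since a retraction $\Omega^nC\to\Omega^{n+q}N$ gives $\Omega^{n}(\Omega C)\to\Omega^{n+q+1}N$ after applying $\Omega$. This produces the $+1$ in the bound.
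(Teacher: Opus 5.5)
There is a genuine gap, and it starts with a misreading. The sequence $0\to\Omega Z\to X\oplus P_Z\to Y\to0$ from Lemma~\ref{lem:rotation} does not have $\Omega Z$ on the wrong side. Here $X\oplus P_Z$ is the \emph{middle} term, so the \emph{first} inequality of Proposition~\ref{prop:extension-estimate} applies directly:
\[
\ddell_{\cA}(X\oplus P_Z)\leq\ddell_{\cA}(\Omega Z)+\ddell_{\cA}Y+1=\ddell_{\cA}Y+1 .
\]
This works because $\dell_{\cA}(\Omega Z)=0$ already gives $\ddell_{\cA}(\Omega Z)=0$ (take $t=0$). Lemma~\ref{lem:stable-projective-invariance}(b) then removes $P_Z$. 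That is the paper's entire proof. The special role of $k=1$ enters only through $\ddell_{\cA}(\Omega Z)=0$.

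The direct construction you use instead is not completed. After rotating $0\to\Omega Z\to C_0'\to C_0\to0$, consider the kernel $K$ of the composite $\Omega Z\oplus P\to C_0'\to X\oplus P_Z$. It is isomorphic to $\ker(P\to C_0\to Y)$, a first syzygy object of $Y$, and sits in $0\to\Omega C_0\to K\to\ker(C_0\to Y)\to0$. Since $C_1$ is not projective, $C_1\to\ker(C_0\to Y)$ need not lift to $K$, so the term ``$C_1\oplus ?$'' is undefined.

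The repair you gesture at is to resolve this syzygy of $Y$ by $0\to\Omega C_t\to\cdots\to\Omega C_0\oplus Q_0$ from Lemma~\ref{lem:iterated-rotation}, matching projective summands via Lemma~\ref{lem:schanuel}. That is exactly the case $p=0$, $D_0=\Omega Z$ of the construction already carried out in the proof of Proposition~\ref{prop:extension-estimate}. So once repaired, your route is a re-derivation of the proposition you set aside.

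Your bookkeeping inequality is also too weak to close the argument. With $(q+1)\text{-}\dell(\Omega C)\leq q\text{-}\dell C+1$, the term $\Omega C_i$ in position $i+1$ only gets $(i+2)\text{-}\dell\leq m-i+1$. An $(m+1)$-witness needs $(i+2)\text{-}\dell\leq(m+1)-(i+1)=m-i$. Your own computation actually yields the sharper bound $(q+1)\text{-}\dell(\Omega C)\leq q\text{-}\dell C$, since the exponent $n$ does not change. The $+1$ in the lemma comes from the extra term $\Omega Z\oplus P$ in position $0$, not from the shift.
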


\begin{proof}
Let $Z=Y/X$. By Lemma~\ref{lem:rotation}, there is an exact sequence
\[
0\longrightarrow\Omega Z\longrightarrow X\oplus P_Z\longrightarrow Y\longrightarrow0.
\]
Since $\Omega Z$ is a first syzygy, $\dell_{\cA}(\Omega Z)=0$ and hence $ \ddell_{\cA}(\Omega Z)=0.$ 
Proposition~\ref{prop:extension-estimate} and Lemma~\ref{lem:stable-projective-invariance} give
\[
\ddell X
=
\ddell(X\oplus P_Z)
\leq\ddell(\Omega Z)+\ddell Y+1
=\ddell Y+1.
\]
\end{proof}

\begin{remark}\label{rem:recovery-guo-igusa}
For $\cA=\modcat\text{-}\Lambda$, Proposition~\ref{prop:extension-estimate} and  Lemma~\ref{lem:subobject-estimate} recover, respectively, Lemma~3.1 and Theorem~5.2 of Guo--Igusa \cite{GuoIgusa2025}. The new point is that their proofs require only projective deflations, pullbacks, Snake Lemma  and Schanuel comparison; consequently the entire exact calculus is intrinsic to abelian categories with enough projective objects.
\end{remark}

The following observation is useful when the kernel of an epimorphism
is projective.

\begin{lemma}\label{L-projective-kernel-ddell}
Let
\[
0\longrightarrow P\xrightarrow{j}M\xrightarrow{\pi}N
\longrightarrow0
\]
be a short exact sequence, where $P$ is projective. Then, for every
$k\geq1$,
\[
k\text{-}\operatorname{ddell}N
\leq
\max\{1,k\text{-}\operatorname{ddell}M\}.
\]
\end{lemma}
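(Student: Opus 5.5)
The plan is to take an optimal witness for $M$ and turn it into a witness for $N$. The only change is to absorb the projective kernel $P$ into the degree-one term. Since $P$ is zero in $\underline{\cA}$, this should cost nothing, except when the witness for $M$ has length zero; that case forces the $\max\{1,-\}$.

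There is nothing to prove if $m:=k\text{-}\ddell M=\infty$, so assume $m<\infty$ and put $m'=\max\{1,m\}$. Choose an exact sequence
\[
0\to C_t\to\cdots\to C_1\xrightarrow{d_1}C_0\xrightarrow{d_0}M\to0,\qquad t\leq m,
\]
with $(i+k)\text{-}\dell C_i\leq m-i$ for $0\le i\le t$. Consider the composite epimorphism $\pi d_0:C_0\to N$ and let $K=\Ker(\pi d_0)$. The snake lemma, applied to the factorization $C_0\to M\to N$, gives a short exact sequence
\[
0\to\Ker d_0\to K\to\Ker\pi=P\to0.
\]
This sequence splits because $P$ is projective, so $K\cong\Ker d_0\oplus P$. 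Here $\Ker d_0=\operatorname{Im}d_1$, with the convention $\Ker d_0=0$ when $t=0$.

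If $t\geq1$, I will form the exact sequence
\[
0\to C_t\to\cdots\to C_2\to C_1\oplus P\xrightarrow{(d_1,\ s)}C_0\xrightarrow{\pi d_0}N\to0,
\]
where $s:P\to K\subseteq C_0$ is the splitting. Exactness needs three checks.
\begin{enumerate}[(i)]
\item At $C_0$: the map $C_1\oplus P\to K=\operatorname{Im}d_1\oplus P$ is surjective.
\item At $C_1\oplus P$: the kernel of $(d_1,s)$ is $\Ker d_1\oplus0$, which is $\operatorname{Im}(C_2\to C_1)$.
\item All further positions are unchanged from the witness for $M$.
\end{enumerate}
For the delooping bounds, Lemma~\ref{lem:stable-projective-invariance}(b) gives $(1+k)\text{-}\dell(C_1\oplus P)=(1+k)\text{-}\dell C_1\leq m-1$. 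The other terms are unchanged, so $k\text{-}\ddell N\leq m\le m'$.

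If $t=0$, then $K\cong P$, and I use the length-one sequence $0\to P\to C_0\to N\to0$. We have $k\text{-}\dell C_0\le m\le m'$. Moreover $(1+k)\text{-}\dell P=0$, since $P\cong0$ in $\underline{\cA}$ and is therefore a stable retract of any object. This meets the required bound $0\leq m'-1$ precisely because $m'\geq1$, which gives $k\text{-}\ddell N\le m'$.

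The only delicate point is the identification $K\cong\Ker d_0\oplus P$ and checking that the new differential is exact at $C_1\oplus P$. The $t=0$ case is exactly where the constant $1$ in the bound enters.
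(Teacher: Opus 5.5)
Your proposal is correct and follows essentially the paper's argument. In both, the split sequence $0\to\Ker d_0\to\Ker(\pi d_0)\to P\to0$ is used to absorb $P$ into the degree-one term $C_1\oplus P$ when the witness has positive length; when the length is zero, $0\to P\to M\to N\to0$ itself serves as a length-one witness, which is where the $\max\{1,-\}$ enters. The paper first separates $m=0$ from $m\geq1$ and then splits on the witness length, while you split directly on $t$, which amounts to the same case analysis.
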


\begin{proof}
Set
$
m=k\text{-}\operatorname{ddell}M<\infty.
$ We first consider the case $m=0$.  Using the given short exact sequence
\[
0\longrightarrow P\longrightarrow M\longrightarrow N
\longrightarrow0,
\]
we have $k\text{-}\operatorname{dell}M=0\leq1$ and $(k+1)\text{-}\operatorname{dell}P=0.$
Hence $k\text{-}\operatorname{ddell}N\leq1.$

Now assume that $m\geq1$. Choose an exact sequence
\[
0\longrightarrow C_n\longrightarrow C_{n-1}
\longrightarrow\cdots\longrightarrow C_1
\xrightarrow{d_1}C_0\xrightarrow{d_0}M
\longrightarrow0
\]
with $n\leq m$ such that
$
(i+k)\text{-}\operatorname{dell}C_i
\leq m-i$ $(0\leq i\leq n).
$

If $n=0$, then $C_0\simeq M$ and $k\text{-}\operatorname{dell}M\leq m.$ The given short exact sequence
\[
0\longrightarrow P\longrightarrow M\longrightarrow N
\longrightarrow0
\]
therefore satisfies $k\text{-}\operatorname{dell}M\leq m$ and $
(k+1)\text{-}\operatorname{dell}P=0\leq m-1.$ Thus $k\text{-}\operatorname{ddell}N\leq m.$

Assume now that $n\geq1$ and set $K=\ker(\pi d_0).$ There is a pullback diagram
\[
\xymatrix{
&0\ar[d]&0\ar[d]&&\\
&\ker p \ar[d]\ar[r]^{\cong}
    & \ker d_0\ar[d]&&\\
0\ar[r]
    &K\ar[d]^q\ar[r]^{\kappa}
    &C_0\ar[d]^{d_0}\ar[r]^{\pi d_0}
    &N\ar@{=}[d]\ar[r]&0\\
0\ar[r]
    &P\ar[r]^j\ar[d]
    &M\ar[r]^{\pi}\ar[d]
    &N\ar[r]&0\\
&0&0&&.
}
\]
Since $P$ is projective, the short exact sequence in the second column splits. Thus there is an isomophism $\iota: K\xrightarrow{\cong}\ker d_0\oplus P.$ 
The exact sequence in the third row, together with the above isomorphism, yields the following exact sequence
\begin{equation}\label{spl}
0\longrightarrow \ker d_0\oplus P \to C_0\to N\to  0.\end{equation}
 Since the original sequence is exact at
$C_0$, the morphism $d_1$ factors as
\[
C_1\xrightarrow{\overline d_1} \ker d_0
\xrightarrow{u}C_0,
\]
where $\overline d_1$ is an epimorphism. Construct a homomorphism 
\[
\alpha=\begin{pmatrix}
\overline d_1&0\\
0& 1_P
\end{pmatrix}:C_1\oplus P\to \ker d_0\oplus P.
\]
Consequently, $\alpha$ is an epimorphism and
$
\ker\alpha\simeq\ker\overline d_1=\ker d_1.
$ So there is an exact sequence 
\begin{equation}\label{spl1}
0\longrightarrow \ker d_1 \to C_1\oplus P\to \ker d_0\oplus P
\to 0.\end{equation}
Combined with exact sequences  \eqref{spl} and \eqref{spl1}, we have 
\[
0\to C_n\to\cdots
\to C_2
\to C_1\oplus P
\to 
C_0
\to N
\to0
\]
is exact.
Since adding a projective direct summand does not change any higher
delooping level,
\[
(k+1)\text{-}\operatorname{dell}(C_1\oplus P)
=
(k+1)\text{-}\operatorname{dell}C_1
\leq m-1.
\]
For all other terms, the original inequalities remain unchanged:
$
(i+k)\text{-}\operatorname{dell}C_i
\leq m-i.
$
Therefore
$
k\text{-}\operatorname{ddell}N\leq m.
$

Combining the above cases yields
\[
k\text{-}\operatorname{ddell}N
\leq
\max\{1,k\text{-}\operatorname{ddell}M\}.
\]
\end{proof}

\section{Derived delooping levels and   cleft extensions}\label{sec:examples}

In this section, we study the behavior of derived delooping levels under
cleft extensions. We then apply the resulting comparison theorem to the
arrow-removal operation and provide examples illustrating its use. We
begin by recalling the notion of a cleft extension.

\begin{definition}\label{def:cleft-extension}
{\rm\cite[Definition~2.1]{Beligiannis2000}}
A \emph{cleft extension} of an abelian category $\cB$ consists of a
diagram of functors
\[
\xymatrix@C=3.7pc{
\cB\ar[r]^-i&
\cA\ar[r]^-e&
\cB\ar@<-1.8ex>[l]_-l
}
\]
such that $e$ is faithful and exact, $(l,e)$ is an adjoint pair, and
there is a natural isomorphism
$
e i \cong \id_{\cB}.
$
We denote such a cleft extension by $(\cB,\cA,i,e,l)$.
\end{definition}

The next result collect several basic properties of a cleft extension.

\begin{lemma}\label{lem:cleft-properties}
{\rm\cite[Lemma~2.2 and sequences~(4.1)--(4.2)]{GPS2021}}
Let $(\cB,\cA,i,e,l)$ be a cleft extension. Then the following statements
hold.
\begin{enumerate}[$(a)$]
\item The functor $e\colon\cA\to\cB$ is essentially surjective.

\item The functor $i\colon\cB\to\cA$ is fully faithful and exact.

\item The functor $l\colon\cB\to\cA$ is faithful and preserves projective
objects.

\item The functor $i$ admits a left adjoint $
q\colon\cA\longrightarrow\cB, $ and there is a natural isomorphism $q l\cong\id_{\cB}.$

\item There exist endofunctors $
F\colon\cB\longrightarrow\cB$ and
$G\colon\cA\longrightarrow\cA$ 
fitting into natural exact sequences of functors
\begin{equation}\label{eq:2.1}
0\longrightarrow F\longrightarrow e l
\longrightarrow\id_{\cB}\longrightarrow0
\end{equation}
and
\begin{equation}\label{eq:2.2}
0\longrightarrow G\longrightarrow l e
\longrightarrow\id_{\cA}\longrightarrow0.
\end{equation}
\end{enumerate}
\end{lemma}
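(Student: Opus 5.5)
We will derive each of the five assertions directly from the axioms of a cleft extension: $e$ faithful and exact, $l\dashv e$, and $ei\cong\id_{\cB}$.

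\textbf{Parts (a) and (b).} For (a), every $B\in\cB$ satisfies $B\cong e(iB)$, so $e$ is essentially surjective. For (b), we first show $i$ is faithful. If $i(f)=0$, then $f\cong ei(f)=0$.

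For fullness, take $g\colon iB\to iB'$ and put $f:=e(g)$, read as a morphism $B\to B'$ via the isomorphism $ei\cong\id_{\cB}$. Then $e(i(f))=e(g)$, and since $e$ is faithful, $i(f)=g$. Here we need naturality of the isomorphism $ei\cong\id$, so that $ei(f)$ corresponds to $f$.

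For exactness of $i$, take a short exact sequence $0\to B'\to B\to B''\to0$ in $\cB$. Apply $i$ and look at the homology of the resulting complex in $\cA$. Since $e$ is exact, $e$ of that homology is the homology of $ei(\text{sequence})\cong(\text{sequence})$, which is zero. A faithful exact functor reflects zero objects: if $eX=0$ then $e(1_X)=0$, so $1_X=0$. Hence the homology vanishes and $i$ is exact.

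\textbf{Parts (c) and (d).} For (c), $l$ is a left adjoint of the exact functor $e$, so $\Hom_{\cA}(lP,-)\cong\Hom_{\cB}(P,e-)$ is exact whenever $P$ is projective. Thus $l$ preserves projective objects. For faithfulness of $l$, we use the counit and unit. The unit $\eta\colon\id\to el$ has a retraction: take the composite $el\to eie\,l\to\ldots$; more simply, compose $e$ applied to the counit $\varepsilon\colon le\to\id$ with $i$. Concretely, $\Hom(lB,iB)\cong\Hom(B,eiB)\cong\Hom(B,B)$, and the identity there gives a map $\phi\colon lB\to iB$ with $e(\phi)\circ\eta_B=\id_B$ up to the isomorphism $ei\cong\id$. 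So $\eta$ is a split monomorphism, and hence $l$ is faithful: $l(f)=0$ gives $el(f)=0$, which forces $f=0$.

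For (d), we need a left adjoint $q$ of $i$. We build it from the map $\phi\colon l\to i$ just obtained. Set
\[
qX:=\Coker\bigl(e(\ker(\text{counit } le X\to X))\ldots\bigr).
\]
It is cleaner, though, to cite the standard construction. Since this lemma is explicitly quoted from GPS2021, Lemma 2.2, the honest plan is to reproduce their argument. We define $q$ on objects of the form $lB$ by $q(lB)=B$, and extend it using the presentation $le\,le X\rightrightarrows leX\to X$. This presentation is right exact because $le$ preserves epimorphisms and the counit is epi, as $e(\varepsilon)$ is split epi and $e$ is faithful exact. Then $q$ becomes a cokernel, and adjointness is checked on the presentation. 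The identity $ql\cong\id$ follows from $\Hom(qlB,B')\cong\Hom(lB,iB')\cong\Hom(B,eiB')\cong\Hom(B,B')$ and Yoneda.

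\textbf{Part (e).} Set $F:=\Ker(el\to\id)$, where the map is the retraction of $\eta$ constructed above; this map is an epimorphism because it is split. Set $G:=\Ker(\varepsilon\colon le\to\id_{\cA})$. The counit $\varepsilon$ is an epimorphism because $e(\varepsilon_X)$ is split epi by the triangle identity $e\varepsilon\circ\eta e=\id$, and a faithful exact functor reflects epimorphisms, since $e$ of the cokernel vanishes.

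The main obstacle is part (d): constructing $q$ with $ql\cong\id$ needs care about right exactness and well-definedness on morphisms. For this step I would follow Beligiannis's construction of $q$ as the cokernel functor.
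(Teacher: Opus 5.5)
The paper does not prove this lemma. It imports it from \cite[Lemma~2.2]{GPS2021}, which goes back to Beligiannis, so your derivation from the three axioms is a genuinely different, self-contained route. For (a), (b), (c) and (e) it is correct, and it isolates exactly which ingredients are needed:
\begin{itemize}
\item naturality of $\theta\colon\id_{\cB}\cong ei$ gives fullness of $i$;
\item a faithful exact functor reflects zero objects, hence exactness and epimorphisms;
\item the adjunct $\phi_B=\varepsilon_{iB}\circ l(\theta_B)\colon lB\to iB$ of $\theta_B$ gives a natural retraction $\nu_B=\theta_B^{-1}e(\phi_B)$ of $\eta_B$, so the first sequence in (e) is even split;
\item the triangle identity $e(\varepsilon_X)\circ\eta_{eX}=1_{eX}$ makes $\varepsilon$ epic.
\end{itemize}
The citation buys brevity. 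Your version shows that everything except (d) is a few lines of adjunction calculus.

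Part (d), however, is not yet a proof. Setting $q(lB)=B$ does not tell you what $q$ does to morphisms $lC_1\to lC_0$ that are not of the form $l(f)$, and your presentation contains one, namely $\varepsilon_{leX}$. Even after fixing such values, you would still owe independence of the presentation and functoriality in $X$.

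The missing idea is to bypass the construction and use representability instead. The bijection $\Psi\colon\Hom_{\cA}(lC,iB)\to\Hom_{\cB}(C,B)$, $h\mapsto\theta_B^{-1}e(h)\eta_C$, is natural in $B$. So by Yoneda each $g\colon lC_1\to lC_0$ determines a unique $\bar g\colon C_1\to C_0$ with $\Psi(hg)=\Psi(h)\bar g$. Explicitly $\bar g=\nu_{C_0}\circ e(g)\circ\eta_{C_1}$; for instance $\overline{l(f)}=f$ and $\overline{\varepsilon_{leX}}=\nu_{eX}$.

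Now take the exact sequence $leGX\xrightarrow{g}leX\xrightarrow{\varepsilon_X}X\to0$, where $g=\iota_X\circ\varepsilon_{GX}$ and $\iota_X\colon GX\hookrightarrow leX$ is the inclusion. It is exact because $\varepsilon_{GX}$ and $\varepsilon_X$ are epic, which your argument in (e) establishes independently of (d). Left exactness of $\Hom_{\cA}(-,iB)$ then gives $\Hom_{\cA}(X,iB)\cong\Hom_{\cB}(\Coker\bar g,B)$ naturally in $B$. Hence $\Hom_{\cA}(X,i-)$ is representable for every $X$, and a left adjoint $q$ exists with functoriality for free.

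Here $\bar g=\nu_{eX}\circ e(\iota_X)$, so $qX\cong\Coker\bigl(\nu_{eX}\circ e(\iota_X)\colon eGX\to eX\bigr)$. This is exactly what your abandoned formula was reaching for; for a trivial extension $\Gamma\ltimes M$ it returns $X/XM$. Your Yoneda argument for $ql\cong\id_{\cB}$ then goes through verbatim.
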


\begin{definition}\label{def:cleft-coextension}
{\rm\cite[Definition~2.1]{Beligiannis2000}}
A cleft coextension is a diagram
\[
\xymatrix@C=3.7pc{
\cB\ar[r]^-i&\cA\ar[r]^-e&\cB\ar@<1.8ex>[l]^-r
}
\]
such that $e$ is faithful and exact, $(e,r)$ is an adjoint pair, and there is a natural isomorphism  $ei\simeq\id_{\cB}$. It is denoted by $(\cB,\cA,i,e,r)$.
\end{definition}
Like with cleft extensions, there  are endofunctors $F'$: $\mathcal{B}\to \mathcal{B}$ and $G': \mathcal{A} \to \mathcal{A}$ that appear in the following exact sequences of functors:
\begin{equation}\label{eq:2.3}
0\longrightarrow \operatorname{id}_{\mathcal{B}}\longrightarrow er\longrightarrow F' \longrightarrow 0, 
\end{equation}
\begin{equation}\label{eq:2.4}
0\longrightarrow \operatorname{id}_{\mathcal{A}}\longrightarrow re\longrightarrow G'\longrightarrow 0.
\end{equation}

\begin{lemma}\label{lem:F-G-nilpotent}
Let the cleft extension $(\cB,\cA,i,e,l)$ be the upper part of the
cleft coextension $(\cB,\cA,i,e,r)$. Then, for every $n\geq 1$,
\[
F^n=0
\quad\Longleftrightarrow\quad
G^n=0
\quad\Longleftrightarrow\quad
(F')^n=0
\quad\Longleftrightarrow\quad
(G')^n=0.
\]
\end{lemma}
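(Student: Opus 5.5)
The plan is to prove the three equivalences $F^n=0\Leftrightarrow G^n=0$, $F'^n=0\Leftrightarrow G'^n=0$ and $F^n=0\Leftrightarrow (F')^n=0$ separately. In each case we transport nilpotency along the faithful, essentially surjective functor $e$, or along an adjunction.

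\emph{Step 1: $F^n=0\Leftrightarrow G^n=0$.} Evaluate \eqref{eq:2.2} at $eX$ after applying $e$, which gives $0\to eG\to ele\to e\to 0$ since $e$ is exact. Compare it with \eqref{eq:2.1} evaluated at $eX$, namely $0\to Fe\to ele\to e\to0$. The two epimorphisms $ele\to e$ are the counit-induced maps. The first is $e$ applied to the counit $\varepsilon:le\to\id_{\cA}$, the second is the counit evaluated at $e$. By the triangle identities for $(l,e)$ these agree up to the identification of $el$ with the relevant component. Hence their kernels give a natural isomorphism $eG\cong Fe$, and iterating yields
\[
eG^n\cong F^ne.
\]
Since $e$ is faithful, and a faithful additive functor kills no nonzero object, $G^n=0$ iff $eG^n=0$. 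Since $e$ is essentially surjective (Lemma~\ref{lem:cleft-properties}(a)), $F^ne=0$ iff $F^n=0$. This gives the first equivalence.

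\emph{Step 2: $F'^n=0\Leftrightarrow G'^n=0$.} The argument is dual, using $(e,r)$, \eqref{eq:2.3} and \eqref{eq:2.4}. Applying the exact functor $e$ to \eqref{eq:2.4} gives $0\to e\to ere\to eG'\to0$. Comparing with \eqref{eq:2.3} at $e$ and using the triangle identities for the unit of $(e,r)$ gives $eG'\cong F'e$. Hence $e(G')^n\cong (F')^ne$, and the same faithfulness and essential-surjectivity argument applies.

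\emph{Step 3: $F^n=0\Leftrightarrow (F')^n=0$.} This is the bridge between the two halves, and I expect it to be the main obstacle. The aim is to show that $F$ is left adjoint to $F'$, as in Beligiannis' setup. Granting this, $F^n\dashv (F')^n$ by composing adjunctions, and then
\[
\Hom_{\cB}(F^nX,Y)\cong\Hom_{\cB}(X,(F')^nY)
\]
shows that $F^n=0$ iff $(F')^n=0$.

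To construct the adjunction I would first note that $el\dashv er$, by composing $(l,e)$ and $(e,r)$. Next I would use $ei\cong\id_{\cB}$ to split \eqref{eq:2.1} and \eqref{eq:2.3} naturally, writing $el\cong\id\oplus F$ and $er\cong\id\oplus F'$. The splitting maps should be $\id\cong ei\to eli$, adjoint to the identity of $i$ via $ei\cong\id$, and its dual $eri\to ei\cong\id$. I would then check that, under the adjunction isomorphism $\Hom(elX,Y)\cong\Hom(X,erY)$, the summand $\Hom(FX,Y)$ corresponds to $\Hom(X,F'Y)$. This amounts to checking compatibility of the two splittings with the unit and counit, which is a diagram chase with the triangle identities.

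If this chase becomes unwieldy, I would fall back on citing the adjunction $F\dashv F'$ from \cite{Beligiannis2000}. Alternatively, I would argue only at the level of vanishing: $F^n=0$ iff $(el)^n$ kills the complementary summands, and $(el)^n\dashv(er)^n$ transfers this to $er$.
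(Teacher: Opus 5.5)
Your proposal is correct and has the same architecture as the paper's proof. The paper gets $F^n=0\Leftrightarrow G^n=0$ and $(F')^n=0\Leftrightarrow(G')^n=0$ by citing Lemmas~2.2 and~2.5 of \cite{MaZhengLiu2026}. It then argues exactly as in your Step~3: it quotes $F\dashv F'$ from \cite[Remark~2.4]{MaZhengLiu2026}, passes to $F^n\dashv(F')^n$, and reads off vanishing from the Hom isomorphism.

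What you add is a self-contained proof of the first two equivalences, via $eG\cong Fe$, $eG'\cong F'e$, and the faithfulness and essential surjectivity of $e$. This is sound, but one justification is wrong as stated. Write $\eta,\varepsilon$ for the unit and counit of $l\dashv e$, and $\eta',\varepsilon'$ for those of $e\dashv r$.
\begin{itemize}
\item The epimorphisms $e\varepsilon_X$ and $el(eX)\to eX$ from \eqref{eq:2.1} need not coincide. For a trivial extension $B\ltimes M$ they differ by the action of $M$ on $X$.
\item What is true is that both are retractions of $\eta_{eX}$. The first is a retraction by the triangle identity $e\varepsilon\circ\eta e=1_e$; the second by the way \eqref{eq:2.1} is split.
\item Hence both kernels are naturally isomorphic to $\Coker\eta_{eX}$, which gives $eG\cong Fe$.
\item Dually, $e\eta'_X$ (by $\varepsilon'e\circ e\eta'=1_e$) and the monomorphism of \eqref{eq:2.3} at $eX$ (by construction) are both sections of $\varepsilon'_{eX}$. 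So both cokernels are $\Ker\varepsilon'_{eX}$, which gives $eG'\cong F'e$.
\end{itemize}

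In Step~3 your splitting maps do not type-check, since $eli$ and $eri$ are not composable. The adjunction is nevertheless easy to verify directly. Let $\Phi\colon\Hom(elX,Y)\cong\Hom(X,erY)$ be the composite adjunction. Naturality of $\varepsilon'$ and the triangle identity $\varepsilon'e\circ e\eta'=1_e$ give $\varepsilon'_Y\circ\Phi(f)=f\circ\eta_X$. So $\Phi$ restricts to a natural bijection between morphisms vanishing on $\operatorname{Im}\eta_X$, i.e.\ $\Hom(FX,Y)$, and morphisms into $\Ker\varepsilon'_Y$, i.e.\ $\Hom(X,F'Y)$.
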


\begin{proof}
By \cite[Lemma~2.2]{MaZhengLiu2026} and
\cite[Lemma~2.5]{MaZhengLiu2026}, respectively, we have
\[
F^n=0\quad\Longleftrightarrow\quad G^n=0
\ \text{and}\ 
(F')^n=0\quad\Longleftrightarrow\quad (G')^n=0.
\]
It therefore remains to relate $F^n$ and $(F')^n$. By
\cite[Remark~2.4]{MaZhengLiu2026}, the pair $(F,F')$ is an adjoint
pair. Consequently, $(F^n,(F')^n)$ is also an adjoint pair. Hence, for
all objects $X,Y\in\cB$, there is a natural isomorphism
\[
\Hom_{\cB}(F^nX,Y)
\cong
\Hom_{\cB}(X,(F')^nY).
\]
If $F^n=0$, the right-hand side vanishes for all $X$ and $Y$. Taking
$X=(F')^nY$ shows that $(F')^nY=0$, and hence $(F')^n=0$. The converse
follows similarly by taking $Y=F^nX$. Therefore
\[
F^n=0\quad\Longleftrightarrow\quad(F')^n=0,
\]
which completes the proof.
\end{proof}

The following dimension-shifting statement will be used repeatedly.

\begin{lemma}\label{lem:bounded-middle-syzygy}
Let
\[
0\longrightarrow X\longrightarrow Y\longrightarrow Z\longrightarrow0
\]
be exact and assume $\pd_{\cA}Y\leq c<\infty$. Then $
\Omega_{\cA}^{c+1}Z\simeq \Omega_{\cA}^{c}X
$ in  $\underline{\cA}$.
\end{lemma}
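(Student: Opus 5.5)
We prove the statement by induction on $c$, using the syzygy-shifting Lemma~\ref{lem:rotation} and Schanuel's Lemma~\ref{lem:schanuel}, so that all identifications hold up to projective summands, hence in $\underline{\cA}$.

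\emph{Base case $c=0$.} Here $Y$ is projective, so $0\to X\to Y\to Z\to0$ is the first step of a projective presentation of $Z$. Thus $X$ is a $1$-syzygy object of $Z$. By Lemma~\ref{lem:schanuel}, $X\simeq\Omega_{\cA}Z$ in $\underline{\cA}$, which is the claim $\Omega^{1}Z\simeq\Omega^{0}X$.

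\emph{Inductive step, $c\geq1$.} Take the given sequence and the horseshoe construction (as in the proof of Lemma~\ref{lem:iterated-rotation}) with chosen projective covers $P_X,P_Z$. This gives an exact sequence
\[
0\longrightarrow\Omega X\longrightarrow \Omega Y\oplus Q\longrightarrow\Omega Z\longrightarrow 0
\]
with $Q$ projective. Here $\Omega Y$ is computed from the projective deflation $P_X\oplus P_Z\to Y$ and agrees with the chosen $\Omega Y$ up to projective summands by Lemma~\ref{lem:schanuel}. Since $\pd Y\leq c$ and $c\geq1$, we get $\pd(\Omega Y\oplus Q)\leq c-1$: the object $\Omega Y$ has a projective resolution of length $\le c-1$ obtained by truncating one for $Y$, again comparing via Schanuel. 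The induction hypothesis applied to this sequence gives
\[
\Omega^{c}(\Omega Z)\simeq\Omega^{c-1}(\Omega X)
\quad\text{in } \underline{\cA},
\]
that is, $\Omega^{c+1}Z\simeq\Omega^{c}X$.

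\emph{Technical point.} The only delicate step is checking that the syzygy functor on $\underline{\cA}$ is well defined and compatible with composition, so that $\Omega^{c}\Omega=\Omega^{c+1}$ in $\underline{\cA}$. This also uses that objects differing by projective summands have stably isomorphic syzygies, which follows from Lemma~\ref{lem:schanuel} together with the fact that $\Omega$ of a projective is projective (indeed zero stably). With these remarks the induction closes.

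An alternative, non-inductive route is to use $\Ext$/dimension shifting. Apply Lemma~\ref{lem:rotation} $c$ times to produce $0\to\Omega^{c}Z\to\Omega^{c-1}X\oplus P\to \Omega^{c-1}Y\oplus P'$-type sequences. The inductive route above is cleaner, and I expect it to suffice.
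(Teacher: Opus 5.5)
Your inductive proof is correct, and it differs from the paper's argument only in how the dimension shift is organized. The paper first applies Lemma~\ref{lem:rotation} to get $0\to\Omega Z\to X\oplus P_Z\to Y\to 0$. It then applies the Horseshoe Lemma $c$ times to reach $0\to\Omega^{c+1}Z\to\Omega^{c}X\oplus Q_c\to\Omega^{c}Y\to 0$. Because the right-hand term is now projective, the sequence splits and the stable isomorphism follows at once. You instead apply the Horseshoe Lemma to the original sequence, which lowers the projective dimension of the \emph{middle} term by one at each step, and you finish with Schanuel's lemma once the middle term is projective. Your version avoids the rotation lemma and makes the induction explicit. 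The paper's version is non-inductive but relies on the same fact you use, namely that a $j$-th syzygy object of $Y$ has projective dimension at most $\max\{0,c-j\}$. One small point: for the horseshoe kernel $K$, Schanuel only gives $K\oplus P_Y\cong\Omega Y\oplus P_X\oplus P_Z$, not literally $K\cong\Omega Y\oplus Q$. Your argument only needs $\pd K\leq c-1$, so this is harmless, and the paper's notation has the same looseness.
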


\begin{proof}
Lemma~\ref{lem:rotation} gives
\[
0\to\Omega Z\to X\oplus P_Z\to Y\to0.
\]
Applying Horseshoe Lemma to the above short exact sequence, we get the following exact sequences
\[
0\to \Omega^{j+1} Z\to \Omega^{j} X\oplus Q_{j} \to \Omega^{j} Y\to0
\qquad( j\geq 0).
\]
Since $\pd_{\cA}Y\leq c<\infty$,  $\Omega^{j} Y$ is a  projective or zero object, for $j\geq c$.  So $\Omega_{\cA}^{c+1}Z\simeq \Omega_{\cA}^{c}X$ up to projective.
\end{proof}

Let $T\colon\cA\to\cB$ be an exact functor. We set
\[
\pd(T)
:=
\sup\bigl\{
\pd_{\cB}T(P)
\mid
P\in\Proj\cA
\bigr\}
\in\mathbb{N}\cup\{\infty\}.
\]

\begin{lemma}\label{lem:factor-finite-pd}
Let $\cC$ be an abelian category with enough projective objects, and let
$f\colon X\to Y$ be a morphism in $\cC$ that factors through an object
$E$ with
$\pd E\leq c<\infty$. Then $\Omega^c f$ factors through a projective object. Consequently, if $T:\cA\to\cB$ is exact and $\pd(T)\leq c$,  then the assignment
$X\longmapsto\Omega_{\cB}^{c}T(X)$ 
induces an additive functor
\[
\underline{T}_c:\underline{\cA}\longrightarrow\underline{\cB}.
\]
\end{lemma}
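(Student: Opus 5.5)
The first assertion reduces to showing that $\Omega^c$ sends a morphism factoring through $E$ to one factoring through a projective. I will use the functoriality of $\Omega$ on $\underline{\cC}$ together with the fact that $\Omega^cE$ is projective in $\underline{\cC}$.

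Write $f=hg$ with $g\colon X\to E$ and $h\colon E\to Y$. The chosen syzygies give an additive endofunctor $\Omega$ of $\underline{\cC}$, so in $\underline{\cC}$ we have
\[
\underline{\Omega^c f}=\underline{\Omega^c h}\,\underline{\Omega^c g}.
\]
Thus $\Omega^c f$ factors, up to a morphism through a projective, through $\Omega^cE$. Since $\pd E\leq c$, the object $\Omega^cE$ is projective. Indeed, any $c$-syzygy object of $E$ is projective, and by Lemma~\ref{lem:schanuel} the chosen one differs from it only by projective summands, so it is projective as well. Hence $\underline{\Omega^c f}=0$, meaning that $\Omega^c f$ factors through a projective object. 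The case $c=0$ is trivial, since then $E$ itself is projective.

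For the consequence, I define $\underline{T}_c$ as follows. On objects it sends $X\mapsto\Omega_{\cB}^cT(X)$. On morphisms it sends $\underline{u}\mapsto\underline{\Omega^c_{\cB}T(u)}$, that is, it is the composite of $T$ with the endofunctor $\Omega^c_{\cB}$ of $\underline{\cB}$. The functor $T$ is additive, being exact, and $\Omega^c_{\cB}$ is additive on $\underline{\cB}$. So the assignment $u\mapsto\underline{\Omega^c T(u)}$ is additive and functorial on $\cA$.

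It remains to check that this assignment vanishes on the ideal $[\Proj\cA]$, which is the only real point. Suppose $u=ba$ factors through $P\in\Proj\cA$. Then $T(u)=T(b)T(a)$ factors through $T(P)$, and $\pd_{\cB}T(P)\leq\pd(T)\leq c$. By the first part, $\Omega^c_{\cB}T(u)$ factors through a projective object. Hence the assignment descends to $\underline{\cA}=\cA/[\Proj\cA]$ and gives an additive functor $\underline{T}_c\colon\underline{\cA}\to\underline{\cB}$.

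The argument uses exactness of $T$ only for additivity. The one delicate point is the well-definedness of $\Omega$ on $\underline{\cC}$ for the chosen syzygies; the paper takes this as given in the preliminaries.
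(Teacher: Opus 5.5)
Your proof is correct and takes essentially the same route as the paper. You factor $f$ through $E$ and use functoriality of $\Omega^c$ (which the paper phrases as lifting to chain maps via the comparison theorem) to see that $\underline{\Omega^c f}$ factors through the projective object $\Omega^c E$. You then descend $\Omega^c_{\cB}\circ T$ to $\underline{\cA}$ by applying the first part to morphisms factoring through $T(P)$ with $P\in\Proj\cA$, exactly as the paper does. Your remark via Lemma~\ref{lem:schanuel}, that the chosen $\Omega^cE$ itself is projective, makes explicit a step the paper states without comment.
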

\begin{proof}
Write $f=ba$ with $a:X\to E$ and $b:E\to Y$. Choosing projective resolutions,  we can lift $a$ and $b$ to chain maps by the  comparison theorem for projective resolutions. The induced morphism on the $c$-th syzygies factors through $\Omega^cE$, which is projective  or zero because $\pd E\leq c$. Hence $\Omega^cf$ is zero in the stable category.

If $f$ and $g$ represent the same morphism in $\underline{\cA}$, then $f-g$ factors through a projective $P$. Thus $T(f-g)$ factors through $T(P)$, whose projective dimension is at most $c$, and the first assertion shows that $\Omega^cT(f)$ and $\Omega^cT(g)$ agree in $\underline{\cB}$. Additivity and compatibility with composition follow from the comparison theorem for projective resolutions.
\end{proof}

\begin{proposition}\label{prop:stable-syzygy-comparison}
Let $T:\cA\to\cB$ be exact with $\pd(T)\leq c<\infty$. Then, for every $X\in\cA$ and every $n\geq0$, there is an  isomorphism in $\underline{\cB}$
\[
\Omega_{\cB}^{c+n}T(X)
\simeq
\Omega_{\cB}^{c}T(\Omega_{\cA}^{n}X).
\]
\end{proposition}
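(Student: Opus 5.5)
We argue by induction on $n$, and it suffices to handle $n=1$. The case $n=0$ is trivial. Once the case $n=1$ is known for every object, apply it to $\Omega_{\cA}^{n-1}X$. This gives $\Omega_{\cB}^{c+1}T(\Omega_{\cA}^{n-1}X)\simeq\Omega_{\cB}^{c}T(\Omega_{\cA}^{n}X)$ in $\underline{\cB}$. Combine it with the induction hypothesis $\Omega_{\cB}^{c+n-1}T(X)\simeq\Omega_{\cB}^{c}T(\Omega_{\cA}^{n-1}X)$. Since $\Omega_{\cB}$ is a functor on $\underline{\cB}$ and so preserves stable isomorphisms, applying $\Omega_{\cB}$ to the latter yields $\Omega_{\cB}^{c+n}T(X)\simeq\Omega_{\cB}^{c+1}T(\Omega_{\cA}^{n-1}X)\simeq\Omega_{\cB}^{c}T(\Omega_{\cA}^{n}X)$.

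For $n=1$, start from the defining sequence $0\to\Omega_{\cA}X\to P_X\to X\to0$ in $\cA$. Since $T$ is exact, it gives an exact sequence
\[
0\longrightarrow T(\Omega_{\cA}X)\longrightarrow T(P_X)\longrightarrow T(X)\longrightarrow0
\]
in $\cB$, with $\pd_{\cB}T(P_X)\leq c$ by the definition of $\pd(T)$. Lemma~\ref{lem:bounded-middle-syzygy}, applied with $Y=T(P_X)$, then gives $\Omega_{\cB}^{c+1}T(X)\simeq\Omega_{\cB}^{c}T(\Omega_{\cA}X)$ in $\underline{\cB}$, which is the desired statement for $n=1$.

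The only subtle point is independence of choices. The syzygy $\Omega_{\cA}X$ is a chosen object, and $\Omega_{\cB}^{c}$ applied to different choices must agree stably. Schanuel's lemma (Lemma~\ref{lem:schanuel}) shows that two choices of $\Omega_{\cA}X$ differ by projective summands. Since $T$ sends projectives to objects of projective dimension at most $c$, the corresponding $c$-th syzygies in $\cB$ differ only by projective summands. Alternatively, this is exactly the well-definedness of the functor $\underline{T}_c$ from Lemma~\ref{lem:factor-finite-pd}, applied to the syzygy functor $\Omega_{\cA}$ on $\underline{\cA}$. I expect no real obstacle: the argument is a direct application of Lemma~\ref{lem:bounded-middle-syzygy}, and the care needed is in handling the stable isomorphisms consistently through the induction.
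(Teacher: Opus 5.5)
Your proof is correct and follows the paper's argument. The case $n=1$ comes from applying $T$ to $0\to\Omega_{\cA}X\to P_X\to X\to0$ and invoking Lemma~\ref{lem:bounded-middle-syzygy} with $\pd_{\cB}T(P_X)\leq c$, and the general case follows by iterating and using that $\Omega_{\cB}$ preserves stable isomorphisms. Your remark on independence of the choice of $\Omega_{\cA}X$ is correct but not needed, since the lemma is applied to the specific chosen syzygy sequence.
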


\begin{proof}
The assertion is immediate for $n=0$. For $n=1$, apply $T$ to the exact
sequence
\[
0\longrightarrow\Omega_{\cA}X
\longrightarrow P_X
\longrightarrow X
\longrightarrow0.
\]
Since $T$ is exact, this yields an exact sequence
\[
0\longrightarrow T(\Omega_{\cA}X)
\longrightarrow T(P_X)
\longrightarrow T(X)
\longrightarrow0.
\]
Moreover, $\pd_{\cB}T(P_X)\leq c$ 
because $P_X$ is projective in $\cA$. Lemma~\ref{lem:bounded-middle-syzygy}
therefore gives an isomorphism
\[
\Omega_{\cB}^{c+1}T(X)
\cong
\Omega_{\cB}^{c}T(\Omega_{\cA}X)
\]
in $\underline{\cB}$.

The general case follows by repeatedly applying the case $n=1$ and
using the fact that the syzygy functor preserves isomorphisms in the
stable category.
\end{proof}

\begin{proposition}\label{thm:exact-functor-transfer}
Let $T\colon\cA\to\cB$ be an exact functor between abelian categories
with enough projective objects. Then, for every $X\in\cA$ and every
$k\geq1$,
\[
k\text{-}\dell_{\cB}(T X)
\leq
\pd(T)+k\text{-}\dell_{\cA}X
\]
and
\[
k\text{-}\ddell_{\cB}(T X)
\leq
\pd(T)+k\text{-}\ddell_{\cA}X.
\]
\end{proposition}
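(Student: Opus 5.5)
We may assume $c:=\pd(T)<\infty$ and that the relevant level of $X$ is finite, since otherwise the right-hand sides are $\infty$. The plan is to transport every stable retraction and every witnessing sequence from $\cA$ to $\cB$ along the stable functor $\underline{T}_c$ of Lemma~\ref{lem:factor-finite-pd}. Proposition~\ref{prop:stable-syzygy-comparison} then converts syzygies in $\cA$ into syzygies in $\cB$, at the cost of a shift by $c$.

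\emph{The delooping level.} Put $n=k\text{-}\dell_{\cA}X$. Choose $Y\in\cA$ and morphisms $\underline{s}:\Omega_{\cA}^{n}X\to\Omega_{\cA}^{n+k}Y$ and $\underline{p}:\Omega_{\cA}^{n+k}Y\to\Omega_{\cA}^{n}X$ in $\underline{\cA}$ with $\underline{p}\,\underline{s}=\underline{1}$. Since $\underline{T}_c$ is a functor on stable categories, applying it exhibits $\Omega_{\cB}^{c}T(\Omega_{\cA}^{n}X)$ as a stable retract of $\Omega_{\cB}^{c}T(\Omega_{\cA}^{n+k}Y)$. By Proposition~\ref{prop:stable-syzygy-comparison} these objects are isomorphic in $\underline{\cB}$ to $\Omega_{\cB}^{c+n}TX$ and $\Omega_{\cB}^{c+n+k}TY$, respectively. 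Hence $\Omega_{\cB}^{c+n}TX$ is a stable retract of $\Omega_{\cB}^{(c+n)+k}(TY)$, which gives $k\text{-}\dell_{\cB}TX\leq c+n$. The same argument applied with $k$ replaced by $i+k$ will be used in the next step.

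\emph{The derived delooping level.} Put $m=k\text{-}\ddell_{\cA}X$ and choose a witness
\[
0\to C_t\to\cdots\to C_0\to X\to0,\qquad t\leq m,
\]
with $(i+k)\text{-}\dell_{\cA}C_i\leq m-i$. Because $T$ is exact, applying it gives an exact sequence
\[
0\to TC_t\to\cdots\to TC_0\to TX\to0.
\]
Here $t\leq m\leq c+m$, and the first step gives $(i+k)\text{-}\dell_{\cB}TC_i\leq c+m-i$. Therefore this sequence is a $(c+m)$-witness for $TX$, and we conclude $k\text{-}\ddell_{\cB}TX\leq c+m$.

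\emph{Main obstacle.} The only delicate point is justifying that stable retractions and stable isomorphisms are carried along compatibly. Concretely, we need the isomorphisms of Proposition~\ref{prop:stable-syzygy-comparison} to be compatible with $\underline{T}_c$ in the sense needed to transfer the identity $\underline{p}\,\underline{s}=\underline{1}$. This in fact requires no naturality: we only use that $\underline{T}_c$ preserves the retraction identity, and that a stable retract of an object remains a stable retract of any object stably isomorphic to it. Both facts follow from Lemma~\ref{lem:factor-finite-pd} by conjugating with the isomorphisms. One must also handle the edge case $c=0$, where $T$ preserves projectives and $\underline{T}_0$ is simply the induced functor; the argument goes through unchanged.
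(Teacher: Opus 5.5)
Your proposal is correct and follows the paper's proof essentially step for step. You transport the stable retraction along $\underline{T}_c$, identify the resulting objects via Proposition~\ref{prop:stable-syzygy-comparison}, and then apply $T$ to a witness sequence, using the first inequality with $i+k$ in place of $k$. Your remark that only the existence of the stable isomorphisms is needed, not any naturality, correctly spells out the step the paper phrases as ``may be identified with.''
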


\begin{proof}
If $\pd(T)=\infty$, both inequalities are immediate. We may therefore
assume that $
\pd(T)=c<\infty.$ Suppose first that  $k\text{-}\dell_{\cA}X=m<\infty.$ 
By definition, there exist an object $Y\in\cA$ and morphisms in
$\underline{\cA}$ exhibiting a stable retraction
\[
\Omega_{\cA}^{m}X
\longrightarrow
\Omega_{\cA}^{m+k}Y
\longrightarrow
\Omega_{\cA}^{m}X.
\]
Applying the functor $\underline{T}_c$ from
Lemma~\ref{lem:factor-finite-pd} gives a stable retraction
\[
\Omega_{\cB}^{c}T(\Omega_{\cA}^{m}X)
\longrightarrow
\Omega_{\cB}^{c}T(\Omega_{\cA}^{m+k}Y)
\longrightarrow
\Omega_{\cB}^{c}T(\Omega_{\cA}^{m}X).
\]
By Proposition~\ref{prop:stable-syzygy-comparison}, this stable
retraction may be identified with
\[
\Omega_{\cB}^{c+m}T(X)
\longrightarrow
\Omega_{\cB}^{c+m+k}T(Y)
\longrightarrow
\Omega_{\cB}^{c+m}T(X).
\]
Hence
\[
k\text{-}\dell_{\cB}(T X)\leq c+m.
\]

We now prove the second inequality. Suppose that  $k\text{-}\ddell_{\cA}X=m<\infty.$
Then there is an exact sequence
\[
0\longrightarrow C_t\longrightarrow\cdots
\longrightarrow C_0\longrightarrow X\longrightarrow0,
\qquad t\leq m,
\]
such that $(i+k)\text{-}\dell_{\cA}C_i\leq m-i$ $(0\leq i\leq t).$
Since $T$ is exact, applying $T$ gives an exact sequence
\[
0\longrightarrow T(C_t)\longrightarrow\cdots
\longrightarrow T(C_0)\longrightarrow T(X)\longrightarrow0.
\]
Applying the first inequality with $i+k$ in place of $k$, we obtain
\[
\begin{aligned}
(i+k)\text{-}\dell_{\cB}T(C_i)
&\leq
c+(i+k)\text{-}\dell_{\cA}C_i  \\
&\leq c+m-i.
\end{aligned}
\]
Since $t\leq m\leq c+m$,  the preceding exact sequence satisfies the defining conditions for
a $(c+m)$-derived delooping of $T(X)$. Therefore,
\[
k\text{-}\ddell_{\cB}(T X)\leq c+m.
\]
\end{proof}

\begin{lemma}\label{lem:simple-under-i}
Let $(\cB,\cA,i,e,l)$ be a cleft extension. If $S$ is simple in $\cB$, then $iS$ is simple in $\cA$.
\end{lemma}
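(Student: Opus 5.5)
We need to show that if $S$ is simple in $\cB$, then $iS$ is simple in $\cA$. The idea is to use the faithful exact functor $e$ together with the isomorphism $ei\cong\id_{\cB}$ to transport any subobject of $iS$ to a subobject of $S$, and then use simplicity of $S$.

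First, $iS\neq0$. Indeed, $e(iS)\cong S\neq0$, and $e$ is additive, so $iS$ cannot be zero.

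Next, let $u\colon U\hookrightarrow iS$ be a subobject with cokernel $v\colon iS\twoheadrightarrow V$. Since $e$ is exact, applying it gives a short exact sequence
\[
0\longrightarrow eU\longrightarrow eiS\cong S\longrightarrow eV\longrightarrow0
\]
in $\cB$. Because $S$ is simple, either $eU=0$ or $eV=0$.

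The remaining step is to show that a faithful exact functor reflects zero objects. If $eX=0$, then $e(1_X)=1_{eX}=0=e(0_X)$, and faithfulness forces $1_X=0_X$, so $X=0$.

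Applying this in the two cases:
\begin{itemize}
\item if $eU=0$, then $U=0$;
\item if $eV=0$, then $V=0$, so $u$ is an epimorphism as well as a monomorphism, hence an isomorphism in the abelian category $\cA$, and $U=iS$.
\end{itemize}
Thus $iS$ has no proper nonzero subobjects, and so it is simple.

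No serious obstacle arises. The only point needing care is that faithfulness of $e$ is what lets us reflect vanishing. Exactness of $i$ (Lemma~\ref{lem:cleft-properties}(b)) is not needed.
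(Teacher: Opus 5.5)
Your proof is correct and follows essentially the same argument as the paper. You apply the faithful exact functor $e$, use $eiS\cong S$ and the simplicity of $S$, and then use that a faithful functor reflects zero objects to conclude that either the subobject or its cokernel vanishes; you also explicitly check $iS\neq0$, which the paper leaves implicit.
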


\begin{proof}
Let $U\into iS$ be a nonzero subobject. Applying the faithful exact functor $e$ gives a nonzero subobject $eU\into eiS\simeq S$. Hence $eU\simeq S$. The quotient $iS/U$ is sent to zero by $e$. Since $e$ is faithful, an object whose identity morphism is sent to zero must itself be zero. Thus $U=iS$.
\end{proof}

\begin{lemma}\label{lem:length-bound}
Let $\cA$ be a length category with finitely many simple objects, and fix
an integer $k\geq1$. Suppose that
$
k\text{-}\ddell(\cA)=d<\infty.$ 
Then every nonzero object $X\in\cA$ of length $
\ell_{\cA}(X)=t$
satisfies
\[
k\text{-}\ddell_{\cA}X
\leq
t(d+1)-1.
\]
\end{lemma}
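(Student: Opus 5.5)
We argue by induction on the length $t=\ell_{\cA}(X)$, using Proposition~\ref{prop:extension-estimate} as the only tool.

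\emph{Base case.} For $t=1$ the object $X$ is simple. It is therefore isomorphic to some $S\in\mathcal S(\cA)$. Since derived delooping levels are invariant under isomorphism (this is a special case of Lemma~\ref{lem:stable-projective-invariance}(b) with $P=Q=0$), we get
\[
k\text{-}\ddell_{\cA}X=k\text{-}\ddell_{\cA}S\leq d=1\cdot(d+1)-1 .
\]

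\emph{Inductive step.} Let $t\geq2$ and assume the bound holds for all nonzero objects of length less than $t$. Since $\cA$ is a length category, $X$ has a composition series. We choose a simple subobject $S'\into X$ and obtain a short exact sequence
\[
0\longrightarrow S'\longrightarrow X\longrightarrow X''\longrightarrow0
\]
with $\ell_{\cA}(X'')=t-1\geq1$, so $X''\neq0$. The first inequality of Proposition~\ref{prop:extension-estimate} gives
\[
k\text{-}\ddell_{\cA}X\leq k\text{-}\ddell_{\cA}S'+k\text{-}\ddell_{\cA}X''+1 .
\]
By the base case, $k\text{-}\ddell_{\cA}S'\leq d$. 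By the induction hypothesis, $k\text{-}\ddell_{\cA}X''\leq (t-1)(d+1)-1$. Substituting these bounds yields
\[
k\text{-}\ddell_{\cA}X\leq d+(t-1)(d+1)-1+1=t(d+1)-1,
\]
which is exactly the claimed bound.

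The only points needing care are bookkeeping. We must use the correct direction of Proposition~\ref{prop:extension-estimate}, namely the middle term bounded by the two ends. We must also keep the quotient $X''$ nonzero so that the induction hypothesis applies to it. We expect no real obstacle beyond these two checks.
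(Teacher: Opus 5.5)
Your proof is correct and is essentially the paper's argument: induction on length, with the inductive step given by the first inequality of Proposition~\ref{prop:extension-estimate}. The only difference is that the paper splits off a simple quotient $0\to X'\to X\to S\to0$ where you split off a simple subobject. This is immaterial, since that inequality treats the two end terms symmetrically and the arithmetic is identical.
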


\begin{proof}
We argue by induction on $t$. If $t=1$, then $X$ is simple, and hence
\[
k\text{-}\ddell_{\cA}X\leq d=t(d+1)-1.
\]
Suppose that $t>1$. Choose a simple quotient $S$ of $X$ and write the
corresponding exact sequence as
\[
0\longrightarrow X'
\longrightarrow X
\longrightarrow S
\longrightarrow0.
\]
Then $\ell_{\cA}(X')=t-1.$
By the induction hypothesis and Proposition~\ref{prop:extension-estimate},
we obtain
\[
\begin{aligned}
k\text{-}\ddell_{\cA}X
&\leq
k\text{-}\ddell_{\cA}X'
+
k\text{-}\ddell_{\cA}S
+1\\
&\leq
\bigl((t-1)(d+1)-1\bigr)+d+1\\
&=
t(d+1)-1.
\end{aligned}
\]
This completes the induction.
\end{proof}

\begin{lemma}\label{lem:cleft-transfer}
Let $(\cB,\cA,i,e,l)$ be a cleft extension of abelian categories with
enough projective objects. Assume that $l$ is exact and that $e$
preserves projective objects.
\begin{enumerate}[$(a)$]
\item For every $X\in\cA$, every $Y\in\cB$, and every $k\geq1$,
\[
k\text{-}\ddell_{\cB}(eX)
\leq
k\text{-}\ddell_{\cA}X
\]
and
\[
k\text{-}\ddell_{\cA}(lY)
\leq
k\text{-}\ddell_{\cB}Y.
\]

\item Suppose, in addition, that $e$ admits an exact right adjoint $r$.
Then, for every $X\in\cA$,
\[
\ddell_{\cA}X
\leq
\pd(r)+\ddell_{\cB}(eX)+1.
\]
Consequently, if $\pd(r)<\infty$, then
\[
\ddell_{\cA}X<\infty
\quad\Longleftrightarrow\quad
\ddell_{\cB}(eX)<\infty.
\]
\end{enumerate}
\end{lemma}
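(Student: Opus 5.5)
Part (a) follows from Proposition~\ref{thm:exact-functor-transfer} once we know $\pd(e)=0$ and $\pd(l)=0$. The functor $e$ is exact because $(\cB,\cA,i,e,l)$ is a cleft extension, and it preserves projectives by hypothesis, so $\pd(e)=0$. Applying the second inequality of Proposition~\ref{thm:exact-functor-transfer} with $T=e$ gives $k\text{-}\ddell_{\cB}(eX)\leq k\text{-}\ddell_{\cA}X$. Likewise, $l$ is exact by assumption and preserves projective objects by Lemma~\ref{lem:cleft-properties}(c), so $\pd(l)=0$. The same proposition with $T=l$ yields $k\text{-}\ddell_{\cA}(lY)\leq k\text{-}\ddell_{\cB}Y$.

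For part (b) the plan is to use the cleft coextension sequence \eqref{eq:2.4}, namely $0\to X\to reX\to G'X\to 0$, which realizes $X$ as a subobject of $r(eX)$.

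First, note that $r$ is exact and $\pd(r)$ is computed on projectives of $\cB$. Proposition~\ref{thm:exact-functor-transfer} with $T=r$ then gives
\[
\ddell_{\cA}(reX)\leq \pd(r)+\ddell_{\cB}(eX).
\]

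Next, Lemma~\ref{lem:subobject-estimate} applied to the monomorphism $X\into reX$ gives
\[
\ddell_{\cA}X\leq \ddell_{\cA}(reX)+1\leq \pd(r)+\ddell_{\cB}(eX)+1,
\]
which is the asserted bound. This is also why (b) is stated only for $k=1$: Lemma~\ref{lem:subobject-estimate} is only available for $k=1$.

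For the consequence, suppose $\pd(r)<\infty$. The implication $\ddell_{\cB}(eX)<\infty\Rightarrow\ddell_{\cA}X<\infty$ follows from the bound just proved. The converse follows from part (a) with $k=1$.

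The main point needing care is the existence of the monomorphism $X\to reX$, i.e.\ that the unit of $(e,r)$ is a monomorphism. I would justify this directly from faithfulness of $e$: the unit $\eta_X$ composed with the counit satisfies $\varepsilon_{eX}\circ e(\eta_X)=1_{eX}$, so $e(\eta_X)$ is a split monomorphism. Since $e$ is exact and faithful, it reflects monomorphisms: $e(\Ker\eta_X)=\Ker e(\eta_X)=0$ forces $\Ker\eta_X=0$. This is exactly the first map in \eqref{eq:2.4}.

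A minor secondary check is that Proposition~\ref{thm:exact-functor-transfer} needs only exactness of $T$ together with enough projectives, and both are granted here.
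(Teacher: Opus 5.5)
Your proposal is correct and follows the paper's proof essentially verbatim. Part (a) uses Proposition~\ref{thm:exact-functor-transfer} with $\pd(e)=\pd(l)=0$, and part (b) uses the transfer estimate for $r$ followed by Lemma~\ref{lem:subobject-estimate} applied to the unit $X\to reX$; the only addition is that you verify directly that the unit is a monomorphism (via the triangle identity and the fact that a faithful exact functor reflects monomorphisms), whereas the paper simply cites the sequence \eqref{eq:2.4}.
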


\begin{proof}
Since $e$ is exact and preserves projective objects, we have
\(\pd(e)=0\). Proposition~\ref{thm:exact-functor-transfer}, applied to
$e$, gives $k\text{-}\ddell_{\cB}(eX)
\leq
k\text{-}\ddell_{\cA}X.
$ 
By Lemma~\ref{lem:cleft-properties}, the functor $l$ preserves
projective objects. Since $l$ is exact by assumption, we have
\(\pd(l)=0\). Applying Proposition~\ref{thm:exact-functor-transfer} to
$l$ gives
$
k\text{-}\ddell_{\cA}(lY)
\leq
k\text{-}\ddell_{\cB}Y.
$ 
This proves~$(a)$.

For~$(b)$, apply Proposition~\ref{thm:exact-functor-transfer} to the
exact functor $r\colon\cB\to\cA$. We obtain
\[
\ddell_{\cA}(reX)
\leq
\pd(r)+\ddell_{\cB}(eX).
\]
Let $\eta_X\colon X\longrightarrow reX$ 
be the unit of the adjunction $e\dashv r$. By~\eqref{eq:2.4},
$\eta_X$ is a monomorphism. Lemma~\ref{lem:subobject-estimate} therefore
gives
\[
\begin{aligned}
\ddell_{\cA}X
&\leq
\ddell_{\cA}(reX)+1\\
&\leq
\pd(r)+\ddell_{\cB}(eX)+1.
\end{aligned}
\]
If $\pd(r)<\infty$, the preceding inequality proves
\[
\ddell_{\cB}(eX)<\infty
\Longrightarrow
\ddell_{\cA}X<\infty.
\]
The converse follows from~$(a)$, applied to the ordinary derived
delooping level. This proves the equivalence.
\end{proof}

Next, we consider the case   $\operatorname{Im}G\subseteq\operatorname{Proj}\mathcal A$, where $G:\mathcal A\to\mathcal A$ is determined by the natural exact
sequence in \eqref{eq:2.2}
\[
0\longrightarrow GX\longrightarrow leX
\xrightarrow{ }X\longrightarrow0.
\]
\begin{proposition}\label{P-cleft-ddell}
Let
$
(\mathcal B,\mathcal A,i,e,l)
$
be a cleft extension of abelian categories with enough projective
objects. Assume that  $l$ is exact, $e$ preserves projective objects  and $\operatorname{Im}G\subseteq\operatorname{Proj}\mathcal A$. Then, for every $X\in\mathcal A$ and every $k\geq1$,
\[
k\text{-}\operatorname{ddell}_{\mathcal B}(eX)
\leq
k\text{-}\operatorname{ddell}_{\mathcal A}(X)
\leq
\max\bigl\{
1,
k\text{-}\operatorname{ddell}_{\mathcal B}(eX)
\bigr\}.
\]
\end{proposition}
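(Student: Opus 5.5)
The left inequality is already available: since $l$ is exact and $e$ preserves projective objects, Lemma~\ref{lem:cleft-transfer}(a) gives $k\text{-}\ddell_{\cB}(eX)\leq k\text{-}\ddell_{\cA}X$ directly. The work is in the right inequality. For it, we take the natural short exact sequence \eqref{eq:2.2} evaluated at $X$:
\[
0\longrightarrow GX\longrightarrow leX\longrightarrow X\longrightarrow0 .
\]
By hypothesis $GX\in\Proj\cA$, so this is exactly the situation of Lemma~\ref{L-projective-kernel-ddell}, with $P=GX$, $M=leX$ and $N=X$. That lemma yields
\[
k\text{-}\ddell_{\cA}X\leq\max\bigl\{1,\,k\text{-}\ddell_{\cA}(leX)\bigr\}.
\]

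It then remains to bound $k\text{-}\ddell_{\cA}(leX)$ by $k\text{-}\ddell_{\cB}(eX)$. Applying the second inequality of Lemma~\ref{lem:cleft-transfer}(a) with $Y=eX\in\cB$ gives
\[
k\text{-}\ddell_{\cA}(l(eX))\leq k\text{-}\ddell_{\cB}(eX).
\]
This rests on $l$ being exact and preserving projectives (Lemma~\ref{lem:cleft-properties}(c)), so that $\pd(l)=0$ in Proposition~\ref{thm:exact-functor-transfer}. Substituting into the previous display, and using that $\max$ is monotone, gives the claimed upper bound. If $k\text{-}\ddell_{\cB}(eX)=\infty$, the statement is vacuous.

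I expect no real obstacle, since the earlier lemmas were evidently set up for this. The only point needing care is the case $k\text{-}\ddell_{\cA}(leX)=0$ in Lemma~\ref{L-projective-kernel-ddell}, where the bound $1$ genuinely appears. This case explains the $\max\{1,\cdot\}$ and is already handled inside that lemma.
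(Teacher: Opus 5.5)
Your proposal is correct and matches the paper's proof. The left inequality comes from Lemma~\ref{lem:cleft-transfer}(a). The right inequality comes from applying Lemma~\ref{L-projective-kernel-ddell} to $0\to GX\to leX\to X\to0$ and then bounding $k\text{-}\ddell_{\cA}(leX)$ by $k\text{-}\ddell_{\cB}(eX)$ via the second inequality of Lemma~\ref{lem:cleft-transfer}(a).
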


\begin{proof}
Since $e$ is exact and preserves projective objects,
Lemma~\ref{lem:cleft-transfer}  gives
\[
k\text{-}\operatorname{ddell}_{\mathcal B}(eX)
\leq
k\text{-}\operatorname{ddell}_{\mathcal A}(X),
\]
and
\[
k\text{-}\operatorname{ddell}_{\mathcal A}(leX)
\leq
k\text{-}\operatorname{ddell}_{\mathcal B}(eX).
\]
On the other hand, the defining exact sequence of the cleft extension
is
\[
0\longrightarrow GX\longrightarrow leX
\longrightarrow X\longrightarrow0.
\]
By assumption, $GX$ is projective. Hence
Lemma~\ref{L-projective-kernel-ddell} gives
\[
k\text{-}\operatorname{ddell}_{\mathcal A}(X)
\leq
\max\bigl\{
1,
k\text{-}\operatorname{ddell}_{\mathcal A}(leX)
\bigr\}.
\]
Combining the last two inequalities, we obtain
\[
k\text{-}\operatorname{ddell}_{\mathcal A}(X)
\leq
\max\bigl\{
1,
k\text{-}\operatorname{ddell}_{\mathcal B}(eX)
\bigr\}.
\]
Therefore
\[
k\text{-}\operatorname{ddell}_{\mathcal B}(eX)
\leq
k\text{-}\operatorname{ddell}_{\mathcal A}(X)
\leq
\max\bigl\{
1,
k\text{-}\operatorname{ddell}_{\mathcal B}(eX)
\bigr\}.
\]
\end{proof}
Applying the above results, we get the following 
 comparison of the  derived delooping
levels under a cleft extension.

\begin{theorem}\label{thm:cleft-transfer}
Let $(\cB,\cA,i,e,l)$ be a cleft extension of length categories with finitely many isomorphism classes of simple objects and enough projective objects. Assume that $l$ is exact and that $e$ preserves projective objects. Set
\[
L_e
=
\max\bigl\{
\ell_{\cB}(eS)
\mid
S\in\operatorname{simp}\cA
\bigr\},
\]
where $\ell_{\cB}(eS)$ denotes the composition length of an object
$eS$ in $\cB$. 
\begin{enumerate}[(a)]
\item Suppose that $e$ admits an exact right adjoint $r$. Then
\[
\ddell(\cB)
\leq
\ddell(\cA)
\leq
\pd(r)+L_e\bigl(\ddell(\cB)+1\bigr).
\]
Moreover, if $F^n=0$ for some $n\geq1$ and $\pd(r)<\infty$, then, for every $k\geq1$,
\[
k\text{-}\ddell(\cA)<\infty
\quad\Longleftrightarrow\quad
k\text{-}\ddell(\cB)<\infty.
\]

\item  Suppose that
$
\operatorname{Im}G\subseteq\operatorname{Proj}\cA,
$ Then, for every $k\geq1$,
\[
k\text{-}\ddell(\cB)
\leq
k\text{-}\ddell(\cA)
\leq
\max\bigl\{
1,\,
L_e\bigl(k\text{-}\ddell(\cB)+1\bigr)-1
\bigr\}.
\]
In particular,
\[
k\text{-}\ddell(\cA)<\infty
\quad\Longleftrightarrow\quad
k\text{-}\ddell(\cB)<\infty.
\]
\end{enumerate}
\end{theorem}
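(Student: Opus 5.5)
Both parts follow the same pattern: bound $k$-$\ddell$ on simples of $\cA$ through $e$, then use Lemma~\ref{lem:length-bound} in $\cB$ to control $eS$, whose length is at most $L_e$.

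\emph{Lower bounds.} For a simple $T\in\mathcal S(\cB)$, Lemma~\ref{lem:simple-under-i} shows that $iT$ is simple in $\cA$. Since $eiT\cong T$, Lemma~\ref{lem:cleft-transfer}(a) gives
\[
k\text{-}\ddell_{\cB}T=k\text{-}\ddell_{\cB}(e\,iT)\leq k\text{-}\ddell_{\cA}(iT)\leq k\text{-}\ddell(\cA).
\]
Taking the supremum over $T$ yields $k\text{-}\ddell(\cB)\leq k\text{-}\ddell(\cA)$. This settles the left inequality in both (a) and (b).

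\emph{Upper bound in (a).} Let $S\in\mathcal S(\cA)$ and put $d=\ddell(\cB)$, which we may assume finite. Since $e$ is faithful, $eS\neq0$, and $\ell_{\cB}(eS)\leq L_e$. Lemma~\ref{lem:length-bound} with $k=1$ gives $\ddell_{\cB}(eS)\leq L_e(d+1)-1$. Lemma~\ref{lem:cleft-transfer}(b) then gives
\[
\ddell_{\cA}S\leq \pd(r)+L_e(d+1)-1+1=\pd(r)+L_e(d+1).
\]
Taking the supremum over $S$ proves the displayed inequality.

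\emph{Upper bound in (b).} The same argument for general $k$ uses Proposition~\ref{P-cleft-ddell} instead of Lemma~\ref{lem:cleft-transfer}(b):
\[
k\text{-}\ddell_{\cA}S\leq\max\{1,k\text{-}\ddell_{\cB}(eS)\}\leq\max\{1,L_e(k\text{-}\ddell(\cB)+1)-1\}.
\]
The equivalence of finiteness follows immediately, since $L_e<\infty$ because there are finitely many simples.

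\emph{Main obstacle: the $k$-version in (a).} Lemma~\ref{lem:cleft-transfer}(b) is only stated for $k=1$, because it relies on Lemma~\ref{lem:subobject-estimate}, which is a $1$-statement. The direction $\Rightarrow$ is already covered by the lower bound, so only $\Leftarrow$ needs work. I would repeat the argument of Lemma~\ref{lem:cleft-transfer}(b) for general $k$:
\begin{enumerate}
\item Proposition~\ref{thm:exact-functor-transfer} gives $k\text{-}\ddell_{\cA}(reX)\leq\pd(r)+k\text{-}\ddell_{\cB}(eX)$.
\item For the monomorphism $X\into reX$ with cokernel $G'X$, Lemma~\ref{lem:rotation} gives $0\to\Omega G'X\to X\oplus P\to reX\to0$.
\item The second inequality of Proposition~\ref{prop:extension-estimate} then gives
\[
k\text{-}\ddell_{\cA}X\leq k\text{-}\ddell_{\cA}(reX)+k\text{-}\ddell_{\cA}(G'X)+1,
\]
where I have used $k\text{-}\ddell(\Omega G'X)\leq k\text{-}\ddell(G'X)$ from Lemma~\ref{lem:ddell-syzygy}.
\end{enumerate}
To control $G'X$, I would use nilpotency. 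By Lemma~\ref{lem:F-G-nilpotent}, $F^n=0$ implies $(G')^n=0$. Induct on the least $j$ with $(G')^jX=0$, applying the displayed estimate to $X$, $G'X$, $(G')^2X,\dots$. This reduces the problem to showing that each $k\text{-}\ddell_{\cB}(e(G')^jX)$ is finite.

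For $X=S$ simple, the objects $e(G')^jS$ lie in $\cB$ and have finite length. Lemma~\ref{lem:length-bound} bounds them whenever $k\text{-}\ddell(\cB)<\infty$. Every term is therefore finite, which gives $k\text{-}\ddell(\cA)<\infty$. The hypothesis $c_r<\infty$ in the introduction's formulation, i.e.\ $\pd(r)<\infty$, is exactly what keeps the term $\pd(r)$ in step 1 finite.
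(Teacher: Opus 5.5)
Your proposal is correct and follows the paper's proof essentially step for step. The shared structure is: the lower bounds via the simple objects $iT$ and Lemma~\ref{lem:cleft-transfer}(a); the upper bounds via Lemma~\ref{lem:length-bound} combined with Lemma~\ref{lem:cleft-transfer}(b), respectively Proposition~\ref{P-cleft-ddell}; and the $k$-version of (a) by downward induction along $0\to(G')^jS\to re\bigl((G')^jS\bigr)\to(G')^{j+1}S\to0$, using $(G')^n=0$.

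One small citation slip: in your steps 2--3, the paper simply applies the second inequality of Proposition~\ref{prop:extension-estimate} directly to $0\to X\to reX\to G'X\to0$. If you first rotate to $0\to\Omega G'X\to X\oplus P\to reX\to0$, the inequality you are actually applying is the first one.
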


\begin{proof} ($a$)
Let $T$ be a simple object of $\cB$. By
Lemma~\ref{lem:simple-under-i}, the object $iT$ is simple in $\cA$.
Moreover, Lemma~\ref{lem:cleft-transfer}(a) gives
\[
\ddell_{\cB}T
=
\ddell_{\cB}(eiT)
\leq
\ddell_{\cA}(iT)
\leq
\ddell(\cA).
\]
Taking the supremum over all simple objects $T$ of $\cB$, we obtain
\[
\ddell(\cB)\leq\ddell(\cA).
\]
For the upper bound, there is nothing to prove if either $\ddell(\cB)=\infty$ or $\pd(r)=\infty.$ 
We may therefore assume that $d:=\ddell(\cB)<\infty$ and $\pd(r)<\infty.$ 
Let $S$ be a simple object of $\cA$. Since $\ell_{\cB}(eS)\leq L_e,$  Lemma~\ref{lem:length-bound}, applied with $k=1$, gives
\[
\ddell_{\cB}(eS)
\leq
L_e(d+1)-1.
\]
Hence Lemma~\ref{lem:cleft-transfer}(b) yields
\[
\begin{aligned}
\ddell_{\cA}S
&\leq
\pd(r)+\ddell_{\cB}(eS)+1\\
&\leq
\pd(r)+L_e(d+1).
\end{aligned}
\]
Taking the supremum over all simple objects $S$ of $\cA$, we obtain
\[
\ddell(\cA)
\leq
\pd(r)+L_e\bigl(\ddell(\cB)+1\bigr).
\]

We now prove the later  assertion of (a) . Fix $k\geq1$. If $k\text{-}\ddell(\cA)<\infty,$ 
then Lemma~\ref{lem:cleft-transfer}(a), applied to the simple objects
$iT$ with $T\in\mathcal{S}(\cB)$, gives
\[
k\text{-}\ddell(\cB)
\leq
k\text{-}\ddell(\cA)
<\infty.
\]
Conversely, suppose that $d_k:=k\text{-}\ddell(\cB)<\infty.$
Let $S$ be a simple object of $\cA$. For each $0\leq j<n$, the defining
exact sequence of the cleft coextension gives
\[
0
\longrightarrow
(G')^j(S)
\longrightarrow
re\bigl((G')^j(S)\bigr)
\longrightarrow
(G')^{j+1}(S)
\longrightarrow
0.
\]
Since $F^n=0$, Lemma~\ref{lem:F-G-nilpotent} implies that $(G')^n=0.$ 
We first show that
\[
k\text{-}\ddell_{\cA}
re\bigl((G')^j(S)\bigr)<\infty
\qquad
(0\leq j<n).
\]
Since $\cA$ has only finitely many simple objects and $0\leq j<n$, the
objects $e(G')^j(S)$ form a finite family of finite-length objects of
$\cB$. The assumption $
k\text{-}\ddell(\cB)<\infty,$ 
together with Lemma~\ref{lem:length-bound}, therefore implies that
their $k$-derived delooping levels admit a finite uniform bound.
Since $r$ is exact and $\pd(r)<\infty$, 
the transfer estimate for $r$
then gives
\[
k\text{-}\ddell_{\cA}
re\bigl((G')^j(S)\bigr)<\infty
\qquad
(0\leq j<n).
\]
Starting with $(G')^n(S)=0$ and applying the second inequality of
Proposition~\ref{prop:extension-estimate} successively for
$j=n-1,n-2,\ldots,0$, we obtain
\[
k\text{-}\ddell_{\cA}(G')^j(S)<\infty
\qquad
(0\leq j\leq n).
\]
In particular, $
k\text{-}\ddell_{\cA}S<\infty.$ 
Since $\cA$ has only finitely many isomorphism classes of simple
objects, taking the maximum over $S\in\mathcal{S}(\cA)$ gives
\[
k\text{-}\ddell(\cA)<\infty.
\]

This completes the proof of ($a$).

($b$)
By Proposition~\ref{P-cleft-ddell}, for every $X\in\mathcal A$ and
every $k\geq1$,
\begin{equation}\label{eq:4.5}
k\text{-}\operatorname{ddell}_{\mathcal B}(eX)
\leq
k\text{-}\operatorname{ddell}_{\mathcal A}(X)
\leq
\max\bigl\{
1,
k\text{-}\operatorname{ddell}_{\mathcal B}(eX)
\bigr\}.
\end{equation}

We first prove
$k\text{-}\operatorname{ddell}\mathcal B
\leq
k\text{-}\operatorname{ddell}\mathcal A.
$
Let $T$ be a simple object of $\mathcal B$. By Lemma \ref{lem:simple-under-i}, $iT$ is
simple in $\mathcal A$.  Applying the left-hand inequality of \eqref{eq:4.5}  to $iT$ and using
$eiT\simeq T$, we obtain
\[
k\text{-}\operatorname{ddell}_{\mathcal B}T
\leq
k\text{-}\operatorname{ddell}_{\mathcal A}(iT)
\leq
k\text{-}\operatorname{ddell}\mathcal A.
\]
Taking the supremum over all simple objects $T$ of $\mathcal B$
gives
\[
k\text{-}\operatorname{ddell}\mathcal B
\leq
k\text{-}\operatorname{ddell}\mathcal A.
\]

We now assume that
$d=k\text{-}\operatorname{ddell}\mathcal B<\infty$
and prove the converse bound. Let $S$ be a simple object of
$\mathcal A$. By Lemma \ref{lem:length-bound},
 \[
k\text{-}\operatorname{ddell}_{\mathcal B}(eS)
\leq \ell_{\mathcal B}(eS)\cdot (d+1)-1
\leq
L_e \cdot (d+1)-1.
\]
Using the right-hand inequality of  \eqref{eq:4.5}, we therefore obtain
\[
k\text{-}\operatorname{ddell}_{\mathcal A}S
\leq
\max\bigl\{
1,
k\text{-}\operatorname{ddell}_{\mathcal B}(eS)
\bigr\}
\leq
\max\{1,L_e \cdot (d+1)-1\}.
\]
Taking the supremum over all simple objects $S$ of $\mathcal A$
gives
\[
k\text{-}\operatorname{ddell}\mathcal A
\leq
\max\{1,L_e \cdot (d+1)-1\}.
\]

It follows immediately that
$k\text{-}\operatorname{ddell}\mathcal A<\infty
\quad\Longleftrightarrow\quad
k\text{-}\operatorname{ddell}\mathcal B<\infty.
$
\end{proof}

We now turn to cleft extensions of module categories. For
finite-dimensional algebras, such extensions restrict to the categories
of finitely generated modules and are related to cleft coextensions as
follows.

\begin{lemma}\label{lem:module-cleft-coextension}
{\rm\cite[Remark~2.9]{MaZhengLiu2026}}
Let $\Gamma$ and $\Lambda$ be finite-dimensional algebras over a field
$\Bbbk$, and let
$(\operatorname{Mod}\text{-}\Gamma,
 \operatorname{Mod}\text{-}\Lambda,i,e,l)$ 
be a cleft extension of module categories. Then the associated functors
restrict to the categories of finitely generated modules and induce a
cleft extension
$(\operatorname{mod}\text{-}\Gamma,
 \operatorname{mod}\text{-}\Lambda,i,e,l).$
Moreover, the induced cleft extension is the upper part of a cleft
coextension.
\end{lemma}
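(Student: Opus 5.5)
The plan is to reduce everything to the ring-theoretic description of cleft extensions of module categories, and then to check finiteness directly. By \cite[Proposition~6.9]{KP2025}, a cleft extension $(\operatorname{Mod}\text{-}\Gamma,\operatorname{Mod}\text{-}\Lambda,i,e,l)$ is, up to equivalence, a $\theta$-extension. Concretely, there are algebra homomorphisms $\sigma\colon\Gamma\to\Lambda$ and $\pi\colon\Lambda\to\Gamma$ with $\pi\sigma=\id_\Gamma$, and the functors are given as follows:
\begin{itemize}
\item $i$ is restriction of scalars along $\pi$;
\item $e$ is restriction of scalars along $\sigma$;
\item $l\cong -\otimes_\Gamma\Lambda_\Lambda$.
\end{itemize}
Indeed, the Eilenberg--Watts theorem identifies $e$ with $-\otimes_\Lambda{}_\Lambda\Lambda_\Gamma$ for a bimodule structure on $\Lambda$, because $e$ is exact, preserves coproducts, and has a left adjoint. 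I will fix this description first, so that all four functors become explicit bimodule functors.

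Next I verify that each functor restricts to finitely generated modules. Over a finite-dimensional $\Bbbk$-algebra, a module is finitely generated exactly when it is finite-dimensional over $\Bbbk$.
\begin{itemize}
\item Restriction of scalars along $\sigma$ or $\pi$ does not change the underlying vector space, so $e$ and $i$ preserve $\modcat$.
\item For $M\in\modcat\text{-}\Gamma$, we have $\dim_\Bbbk(M\otimes_\Gamma\Lambda)\le\dim_\Bbbk M\cdot\dim_\Bbbk\Lambda<\infty$, so $l$ preserves $\modcat$.
\end{itemize}
The defining properties then persist on the full subcategories. The restricted $e$ is still faithful and exact, the isomorphism $ei\cong\id$ restricts, and the adjunction $l\dashv e$ restricts because $\modcat$ is a full subcategory of $\operatorname{Mod}$. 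This gives the cleft extension $(\modcat\text{-}\Gamma,\modcat\text{-}\Lambda,i,e,l)$.

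For the coextension, I take $r=\Hom_\Gamma({}_\Lambda\Lambda_\Gamma,-)$. This is right adjoint to $e=-\otimes_\Lambda\Lambda_\Gamma$ by tensor--Hom adjunction. For $N\in\modcat\text{-}\Gamma$, the space $\Hom_\Gamma(\Lambda,N)$ embeds in $\Hom_\Bbbk(\Lambda,N)$, which is finite-dimensional, so $r$ restricts to $\modcat$. The adjunction $e\dashv r$ again restricts by fullness. Together with the faithful exact $e$ and $ei\cong\id$, this gives the cleft coextension $(\modcat\text{-}\Gamma,\modcat\text{-}\Lambda,i,e,r)$, and its upper part is exactly the cleft extension above.

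The main obstacle is the first step: identifying the abstract cleft extension with a $\theta$-extension in a way compatible with the given functors $i,e,l$, rather than only up to some unspecified equivalence. I would handle this by invoking \cite[Proposition~6.9]{KP2025} together with \cite[Remark~2.9]{MaZhengLiu2026}, and by checking that replacing the functors with naturally isomorphic ones preserves all the axioms, since every axiom is stated up to natural isomorphism. Once this identification is in place, every remaining step is a dimension count.
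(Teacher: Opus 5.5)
The paper gives no argument of its own here; it only cites \cite[Remark~2.9]{MaZhengLiu2026}. Your outline ($\theta$-extension description, then dimension counts, then coinduction as right adjoint) is the natural argument behind that citation. Once $i$, $e$, $l$ really are restriction along $\Bbbk$-algebra maps $\pi,\sigma$ and induction along $\sigma$, your second and third steps are fine.

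The gap is in the first step, and your justification of it is wrong. A left adjoint only gives $e\cong\Hom_\Lambda(l(\Gamma),-)$. Eilenberg--Watts makes $e$ a tensor functor exactly when $e$ also preserves coproducts, i.e.\ when the projective module $l(\Gamma)$ is finitely generated, and nothing in Definition~\ref{def:cleft-extension} forces that. In fact the statement fails for cleft extensions in the bare sense of that definition. Take $\Gamma=\Bbbk$ and $\Lambda=\Bbbk\times\Bbbk$, write right $\Lambda$-modules as pairs $(M_1,M_2)$ of vector spaces, and put $i(V)=(V,0)$, $e(M_1,M_2)=M_1\oplus M_2^{\mathbb N}$, $l(V)=(V,V^{(\mathbb N)})$. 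Then $e$ is faithful and exact, $ei=\id$, and
\[
\Hom_\Lambda\bigl(lV,(M_1,M_2)\bigr)\cong\Hom_\Bbbk(V,M_1)\times\Hom_\Bbbk(V,M_2)^{\mathbb N}\cong\Hom_\Bbbk\bigl(V,e(M_1,M_2)\bigr).
\]
Yet $l(\Bbbk)$ and $e(0,\Bbbk)$ are not finitely generated, and $e$ has no right adjoint. So the reduction to a $\theta$-extension is not bookkeeping about natural isomorphisms. It needs a genuine extra hypothesis: $e$ preserves coproducts, equivalently $e$ has a right adjoint on $\operatorname{Mod}$. It also needs $\Bbbk$-linearity of the functors, which your dimension counts use silently. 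You have to add these hypotheses explicitly, or check that they are part of what the cited sources \cite{KP2025,MaZhengLiu2026} mean by a cleft extension of module categories.

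Note also that the identification holds only up to equivalence of categories. A Morita equivalence $\operatorname{Mod}\text{-}\Bbbk\to\operatorname{Mod}\text{-}M_2(\Bbbk)$ is itself a cleft extension, and there is no algebra map $M_2(\Bbbk)\to\Bbbk$. So you also need that such equivalences preserve finite generation, which your natural-isomorphism remark does not cover.

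Under that hypothesis you can bypass \cite{KP2025} altogether. Put $P=l(\Gamma)$, with its left $\Gamma$-action through $l$.
\begin{enumerate}
\item[(i)] $P$ is projective because $e$ is exact, a generator because $e\cong\Hom_\Lambda(P,-)$ is faithful, and finitely generated because $\Hom_\Lambda(P,-)$ preserves coproducts.
\item[(ii)] Hence $l\cong-\otimes_\Gamma P$ and $e\cong-\otimes_\Lambda P^{*}$ with $P^{*}=\Hom_\Lambda(P,\Lambda)=e(\Lambda)$, and $r=\Hom_\Gamma(P^{*},-)$ is right adjoint to $e$.
\item[(iii)] $l$ preserves finite generation because it is right exact with $l(\Gamma)=P$. $e$ and $r$ preserve it by counting $\Bbbk$-dimensions.
\item[(iv)] $i$ preserves it because $\Lambda$ is a direct summand of some $P^m$, so $\dim_\Bbbk i(X)\leq m\dim_\Bbbk ei(X)=m\dim_\Bbbk X$.
\end{enumerate}
From there your restriction argument for the two adjunctions and for $ei\cong\id$ goes through unchanged.
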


Since the categories of finitely generated modules over
finite-dimensional algebras are length categories with finitely many isomorphism classes of simple objects and enough projective objects,
Theorem~\ref{thm:cleft-transfer} yields the following consequence.

\begin{corollary}\label{cor:cleft-findim}
Let $(\operatorname{mod}\text{-}\Gamma,
 \operatorname{mod}\text{-}\Lambda,i,e,l)$ be a cleft extension of finite-dimensional algebras over a field
$\Bbbk$. Assume that $l$ is exact, and $e$ preserves
projective modules.
\begin{enumerate}[(a)]
\item Suppose that $e$ admits an exact right adjoint $r$. Then
\[
\ddell(\Gamma)
\leq
\ddell(\Lambda)
\leq
\pd_{\Lambda}r(\Gamma)+L_e\bigl(\ddell(\Gamma)+1\bigr).
\]
Moreover, if $F^n=0$ for some $n\geq1$ and $\pd_{\Lambda}r(\Gamma)<\infty$, then, for every $k\geq1$,
\[k\text{-}\ddell\Lambda<\infty
\quad\Longleftrightarrow\quad
k\text{-}\ddell\Gamma<\infty.\]
\item  Suppose that
$
\operatorname{Im}G\subseteq\operatorname{proj}(\Lambda),
$ Then, for every $k\geq1$,
\[
k\text{-}\ddell(\Gamma)
\leq
k\text{-}\ddell(\Lambda)
\leq
\max\bigl\{
1,\,
L_e\bigl(k\text{-}\ddell(\Gamma)+1\bigr)-1
\bigr\}.
\]
In particular,
\[
k\text{-}\ddell\Lambda<\infty
\quad\Longleftrightarrow\quad
k\text{-}\ddell\Gamma<\infty.\]
\end{enumerate}
Consequently, if,  for some $k\geq1$, either $k\text{-}\ddell\Gamma<\infty$ or $k\text{-}\ddell\Lambda<\infty$, then  $
\Findim\Gamma^{\op}<\infty \ \text{and}\
\Findim\Lambda^{\op}<\infty.
$
\end{corollary}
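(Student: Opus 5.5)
The plan is to reduce everything to Theorem~\ref{thm:cleft-transfer}. The first step is Lemma~\ref{lem:module-cleft-coextension}: the categories $\modcat\text{-}\Gamma$ and $\modcat\text{-}\Lambda$ are length categories with finitely many isomorphism classes of simple objects and enough projective objects. The cleft extension $(\modcat\text{-}\Gamma,\modcat\text{-}\Lambda,i,e,l)$ is the upper part of a cleft coextension, so the functors $F,G,F',G'$ are available and Lemma~\ref{lem:F-G-nilpotent} applies. With $\cB=\modcat\text{-}\Gamma$ and $\cA=\modcat\text{-}\Lambda$, the hypotheses that $l$ is exact and $e$ preserves projectives are exactly those of Theorem~\ref{thm:cleft-transfer}. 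Part~(b) of the corollary is then a verbatim specialization of Theorem~\ref{thm:cleft-transfer}(b), since $\Proj\cA=\operatorname{proj}(\Lambda)$.

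For part~(a), the only translation needed is $\pd(r)=\pd_{\Lambda}r(\Gamma)$. Every finitely generated projective $\Gamma$-module $P$ is a direct summand of some $\Gamma^m$, and $r$ is additive, so $r(P)$ is a summand of $r(\Gamma)^m$. This gives $\pd_\Lambda r(P)\le\pd_\Lambda r(\Gamma)$. Conversely, $\Gamma$ is itself projective, so the supremum defining $\pd(r)$ equals $\pd_\Lambda r(\Gamma)$. Both the numerical bound and the finiteness equivalence under $F^n=0$ then follow from Theorem~\ref{thm:cleft-transfer}(a).

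For the final consequence, I would use the Guo--Igusa inequality quoted in the introduction, $\Findim\Lambda^{\op}\le\ddell\Lambda$. I would also need its $k$-version, $\Findim\Lambda^{\op}\le k\text{-}\ddell\Lambda$, and this is the step I expect to be the main obstacle, since it is not proved in the paper so far. The simplest route is to check from the definitions that $k\text{-}\dell_\Lambda X\ge\dell_\Lambda X$ for $k\ge1$. Indeed, if $\Omega^nX$ is a stable retract of $\Omega^{n+k}Y$, then it is a stable retract of $\Omega^{n+1}(\Omega^{k-1}Y)$. Hence $(i+k)\text{-}\dell C_i\ge(i+1)\text{-}\dell C_i$ termwise, every $k$-witness is a $1$-witness, and $\ddell X\le k\text{-}\ddell X$. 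It follows that $\ddell\Lambda\le k\text{-}\ddell\Lambda$, so $\Findim\Lambda^{\op}\le k\text{-}\ddell\Lambda$ for every $k$.

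Now assume $k\text{-}\ddell\Gamma<\infty$ or $k\text{-}\ddell\Lambda<\infty$ for some $k$. In case~(b) the finiteness equivalence gives both finite. In case~(a) it also gives both finite, using the extra hypotheses $F^n=0$ and $\pd_\Lambda r(\Gamma)<\infty$. If those hypotheses are not meant to be assumed in the final clause, I would fall back on the inequality $k\text{-}\ddell\Gamma\le k\text{-}\ddell\Lambda$ from Lemma~\ref{lem:cleft-transfer}(a), applied to the simples $iT$. That covers the direction from $\Lambda$ to $\Gamma$; the reverse direction needs the finiteness hypotheses, and I would state that restriction explicitly. Once both $k$-ddell values are finite, the inequality $\Findim(-)^{\op}\le k\text{-}\ddell(-)$ gives $\Findim\Gamma^{\op}<\infty$ and $\Findim\Lambda^{\op}<\infty$.
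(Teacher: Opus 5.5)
Your proposal is correct and follows the paper's proof. Like the paper, you specialize Theorem~\ref{thm:cleft-transfer} via $\pd(r)=\pd_\Lambda r(\Gamma)$ and then use Guo--Igusa's $\Findim(-)^{\op}\le\ddell(-)\le k\text{-}\ddell(-)$; the only difference is that you verify the second inequality directly instead of citing it. Your caution about the final clause is more careful than the paper, which silently assumes $\pd_\Lambda r(\Gamma)<\infty$, and it can be sharpened: $F^n=0$ is not needed there, because $k\text{-}\ddell\Gamma<\infty$ gives $\ddell\Gamma<\infty$, and then the first inequality of (a) already yields $\ddell\Lambda<\infty$ once $\pd_\Lambda r(\Gamma)<\infty$.
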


\begin{proof}
Since $\pd(r)=
\sup\bigl\{
\pd_{\Lambda}r(P)
\mid
P\in\operatorname{proj}\text{-}\Gamma
\bigr\}=
\pd_{\Lambda}r(\Gamma)
<\infty,$  this is a direct result of Theorem~\ref{thm:cleft-transfer}. By Theorem~1.1 of Guo--Igusa \cite{GuoIgusa2025}, one has
\[
\Findim\Gamma^{\op}\leq\ddell\Gamma\leq k\text{-}\ddell\Gamma
\qquad\text{and}\qquad
\Findim\Lambda^{\op}\leq\ddell\Lambda\leq k\text{-}\ddell\Lambda.
\]
Hence the finiteness of the two derived delooping levels implies
\[
\Findim\Gamma^{\op}<\infty
\qquad\text{and}\qquad
\Findim\Lambda^{\op}<\infty.
\] \end{proof}

The arrow-removal operation and its homological properties have been
studied in \cite{EPS2022,GPS2021}. We now apply Theorem~\ref{thm:cleft-transfer} (b) to the arrow removal
operation.

\begin{theorem}[Derived-delooping arrow removal]
\label{thm:arrow-removal}
Let $Q$ be a finite quiver, and let $\Lambda=\Bbbk Q/I$ be an admissible
quotient of the path algebra $\Bbbk Q$ over a field $\Bbbk$.  Suppose that there are arrows $a_i\colon v_{e_i}\longrightarrow v_{f_i},$
 $i=1,2,\ldots,t,$
in $Q$ which do not occur in a set of minimal generators of $I$ in
$\Bbbk Q$ and satisfy
$
\Hom_{\Lambda}(e_i\Lambda,f_j\Lambda)=0\
\text{for all }i,j\in\{1,2,\ldots,t\}.
$  Define
\[
\Gamma
=
\Lambda/
\Lambda\{\overline a_i\}_{i=1}^t\Lambda,
\]
where $\overline a_i=a_i+I$.
Then, for every $k\geq1$,
\[
k\text{-}\operatorname{ddell}\Gamma
\leq
k\text{-}\operatorname{ddell}\Lambda
\leq
\max\bigl\{
1,
k\text{-}\operatorname{ddell}\Gamma
\bigr\}.
\]
\end{theorem}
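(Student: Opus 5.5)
The plan is to realize arrow removal as a cleft extension satisfying the hypotheses of Theorem~\ref{thm:cleft-transfer}(b), namely $l$ exact, $e$ preserving projectives, and $\operatorname{Im}G\subseteq\operatorname{Proj}\cA$. We then observe that $L_e=1$ in this situation, which turns the bound $\max\{1,L_e(k\text{-}\ddell\Gamma+1)-1\}$ into the claimed $\max\{1,k\text{-}\ddell\Gamma\}$.

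\textbf{Setting up the cleft extension.} By the results on arrow removal in \cite{GPS2021,EPS2022}, under the hypotheses that the $a_i$ do not occur in minimal generators of $I$, the quotient map $\Lambda\to\Gamma$ splits as algebras. The inclusion $\Gamma\to\Lambda$ comes from $\Bbbk Q'/I'\subseteq\Bbbk Q/I$, where $Q'$ is $Q$ with the arrows $a_i$ removed. Moreover, $\Lambda$ is the tensor ring $T_\Gamma(M)$ of the $\Gamma$-bimodule
\[
M=\textstyle\bigoplus_i \Gamma f_i\otimes_{\Bbbk} e_i\Gamma
\]
(up to the conventions for right modules). The condition $\Hom_\Lambda(e_i\Lambda,f_j\Lambda)=0$, i.e.\ $f_j\Lambda e_i=0$, gives $M\otimes_\Gamma M=0$. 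Hence $\Lambda\cong\Gamma\ltimes M$ is a trivial extension.

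This yields a cleft extension $(\modcat\text{-}\Gamma,\modcat\text{-}\Lambda,i,e,l)$ with
\[
i=\text{restriction along }\Lambda\to\Gamma,\qquad e=\text{restriction along }\Gamma\to\Lambda,\qquad l=-\otimes_\Gamma\Lambda.
\]
Since $\Lambda\cong\Gamma\oplus M$ and $M_\Gamma$ is a direct sum of copies of $e_i\Gamma$, the module $\Lambda_\Gamma$ is projective. Therefore $l$ is exact and $e$ preserves projectives: $e(\Lambda)=\Lambda_\Gamma$ is projective.

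\textbf{Checking $\operatorname{Im}G\subseteq\operatorname{proj}\Lambda$.} This is the key step. For $X\in\modcat\text{-}\Lambda$, the sequence \eqref{eq:2.2} is
\[
0\to GX\to X\otimes_\Gamma\Lambda\to X\to0,
\]
and for trivial extensions one has $GX\cong X\otimes_\Gamma M\otimes_\Gamma\Lambda$ (the standard description when $M\otimes_\Gamma M=0$, or via the kernel of multiplication). Now
\[
X\otimes_\Gamma M\cong\bigoplus_i (Xf_i)\otimes_\Bbbk e_i\Gamma,
\]
which is a projective $\Gamma$-module, so $l$ of it, namely $\bigoplus_i (Xf_i)\otimes_\Bbbk e_i\Lambda$, is projective over $\Lambda$. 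I expect the main obstacle to be verifying this identification of $G$ carefully, in particular that the kernel is exactly $X\otimes_\Gamma M\otimes_\Gamma\Lambda$ using $M\otimes_\Gamma M=0$. If needed, I would instead cite the corresponding computation in \cite{GPS2021}, where $\operatorname{Im}G\subseteq\operatorname{proj}\Lambda$ is used for arrow removal in the finitistic dimension setting.

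\textbf{Computing $L_e$ and concluding.} Every simple $\Lambda$-module is the vertex simple $S_v$. Restricted to $\Gamma$, it is still the simple at $v$, because $\Lambda$ and $\Gamma$ have the same vertices and the removed arrows act as zero on $S_v$. Hence $\ell_\Gamma(eS)=1$ and $L_e=1$. Theorem~\ref{thm:cleft-transfer}(b), or equivalently Corollary~\ref{cor:cleft-findim}(b), then gives, for every $k\geq1$,
\[
k\text{-}\ddell\Gamma\leq k\text{-}\ddell\Lambda\leq\max\{1,\,1\cdot(k\text{-}\ddell\Gamma+1)-1\}=\max\{1,k\text{-}\ddell\Gamma\}.
\]
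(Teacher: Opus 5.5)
Your proposal is correct and is essentially the paper's proof. The paper also applies Theorem~\ref{thm:cleft-transfer}(b) to the arrow-removal cleft extension and uses $L_e=1$, but it simply cites \cite[Proposition~4.6]{GPS2021} for $l$ exact, $e$ preserving projectives and $\operatorname{Im}G\subseteq\operatorname{proj}\Lambda$, instead of deriving these from $\Lambda\cong\Gamma\ltimes M$. If you keep your direct verification, fix two slips: with the paper's left-to-right composition $a_i=e_ia_if_i$, so $M=\bigoplus_i\Gamma e_i\otimes_{\Bbbk}f_i\Gamma$ (only then does $f_j\Lambda e_i=0$ give $M\otimes_\Gamma M=0$), and exactness of $l=-\otimes_\Gamma\Lambda$ needs ${}_\Gamma\Lambda$ (not $\Lambda_\Gamma$) to be projective, which also holds since ${}_\Gamma M$ is a sum of copies of $\Gamma e_i$.
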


\begin{proof}
Put $J=\Lambda\{\overline a_i\}_{i=1}^t\Lambda$.
 The arrow-removal construction gives the following diagram of functors:
\[
\begin{tikzcd}[column sep=5.5em]
\mathrm{mod}\text{-}\Gamma
\arrow[bend left=0]{r}{i=\operatorname{Hom}_\Gamma({}_\Lambda \Gamma_\Gamma,-)}
&
\mathrm{mod}\text{-}\Lambda
\arrow[bend left=0]{r}{e=\operatorname{Hom}_\Lambda({}_\Gamma \Lambda_\Lambda,-)}
\arrow[bend left=40]{l}{p=\operatorname{Hom}_\Lambda({}_\Gamma \Gamma_\Lambda,-)}
\arrow[bend right=40]{l}[above]{q=-\otimes_\Lambda {}_\Lambda \Gamma_\Gamma}
&
\mathrm{mod}\text{-}\Gamma.
\arrow[bend left=40]{l}{r=\operatorname{Hom}_\Gamma({}_\Lambda \Lambda_\Gamma,-)}
\arrow[bend right=40]{l}[above]{l=-\otimes_\Gamma {}_\Gamma \Lambda_\Lambda}
\end{tikzcd}
\]
By \cite[Proposition~4.6]{GPS2021}, the following statements hold:
\begin{itemize}
\item
$
(\operatorname{mod}\text{-}\Gamma,
 \operatorname{mod}\text{-}\Lambda,i,e,l)
$
is a cleft extension;
\item the functors $l$ and $r$ are exact, and $e$ preserves projective
modules;
\item $\operatorname{Im}G\subseteq\operatorname{proj}\Lambda.$
\end{itemize}
Hence the hypotheses of
Theorem~\ref{thm:cleft-transfer} (b) are satisfied. 
Since $J\subseteq\operatorname{rad}\Lambda$, every simple
$\Lambda$-module is annihilated by $J$. Therefore the simple
$\Lambda$-modules and the simple $\Gamma=\Lambda/J$-modules are
naturally identified. More precisely, if $S_v$ is the simple module
corresponding to a vertex $v$ of $Q$, then $e(S_v)\simeq S_v$ as a $\Gamma$-module. Hence $\ell_{\Gamma}(eS_v)=1$
for every simple $\Lambda$-module $S_v$, and consequently $L_e=1.$  Applying TTheorem~\ref{thm:cleft-transfer} (b), we obtain
\[
k\text{-}\operatorname{ddell}\Gamma
\leq
k\text{-}\operatorname{ddell}\Lambda
\leq
\max\bigl\{
1,
k\text{-}\operatorname{ddell}\Gamma
\bigr\}.
\]
\end{proof}
\begin{remark}\label{R-arrow-removal-kddell}
Theorem~\ref{thm:arrow-removal} shows that arrow removal
preserves the finiteness of all higher derived delooping levels.
Thus an arrow addition or removal satisfying the above hypotheses
cannot turn an algebra with finite $k$-derived delooping level into
one with infinite $k$-derived delooping level, or conversely. In fact, the only possible change in the numerical value occurs at
the boundary value zero: if
$k\text{-}\operatorname{ddell}\Gamma\geq1,$
then
$
k\text{-}\operatorname{ddell}\Lambda
=
k\text{-}\operatorname{ddell}\Gamma,
$
whereas if
$
k\text{-}\operatorname{ddell}\Gamma=0,
$
then
$
0
\leq
k\text{-}\operatorname{ddell}\Lambda
\leq1.
$
\end{remark}

The following example shows that the arrow removal operation  can simplify the estimation for  the finiteness of the derived delooping level.
\begin{example}\label{ex:GPS-arrow-removal}\cite[Example 6.3]{GPS2021}
Let $\Lambda=\Bbbk Q/I$, where $\Bbbk$ is a field and $Q$ is the quiver
\[
\begin{tikzcd}
6 \arrow[rr,"g"] && 1 \arrow[rd,"b"] \arrow[ld,"a"] & \\
&2 \arrow[rd,"c"] & & 3\arrow[ld,"d"] \\
5\arrow[uu,shift left=4pt,"f_1"]
\arrow[uu,shift right=4pt,"f_2" ']&&  \arrow[ll,"e"] 4 & \\
\end{tikzcd}
\]
and
$I=\langle ac-bd,\, cef_1,\, de,\, ef_1g,\, f_1gb,\, ga\rangle.$
Let $\overline f_2=f_2+I\in\Lambda$, and set
\[
J=\Lambda\overline f_2\Lambda,
\qquad
\Gamma=\Lambda/J.
\]
The arrow $f_2$ does not occur in the displayed set of minimal
generators of $I$. Moreover,
$\Hom_\Lambda(e_5\Lambda,e_6\Lambda)=0.$
Thus $f_2$ satisfies the arrow-removal conditions.
We next compute the derived delooping level of $\Gamma$. The
indecomposable projective right $\Gamma$-modules have the following
Loewy series:
\[
\begin{array}{ccccccccccccc}
1 && 2 &&   3 && 4&& 5&& 6  \\
2 \ 3&& 4 && 4 && 5&&6&&1  \\
4&&5  && && 6 && 1&&3 \end{array}.
\]
It follows directly from the above list that
$\operatorname{dell}_{\Gamma}S_i=0$ for
$i=1,3,4,5,6$. Since $S_2$ is not a submodule of any projective
$\Gamma$-module, $\operatorname{ddell}_{\Gamma}S_2\neq0$. On the other
hand,
\[
\Omega_\Gamma S_2
=
\begin{array}{c}
4\\
5
\end{array}
=
\Omega_\Gamma^2S_1.
\]
Thus  $\operatorname{dell}_\Gamma S_2\leq1.$ 
Since the derived delooping level is bounded above by the ordinary
delooping level, we have
\[
\operatorname{ddell}_\Gamma S_2
\leq
\operatorname{dell}_\Gamma S_2
\leq1.
\]
Combining this with
\(\operatorname{ddell}_\Gamma S_2\neq0\), we obtain
\[
\operatorname{dell}_\Gamma S_2
=
\operatorname{ddell}_\Gamma S_2
=
1.
\]
It follows that $\operatorname{ddell}\Gamma=1.$ 
Applying Theorem~\ref{thm:arrow-removal}, we obtain
$\operatorname{ddell}\Lambda=1$. 
Furthermore,
\[
\Findim\Lambda^{\op}\leq1
\qquad\text{and}\qquad
\Findim\Gamma^{\op}\leq1.
\]

Establishing the finiteness of $\operatorname{ddell}\Lambda$ directly
would require computing successive stable syzygies for all six simple
right $\Lambda$-modules. These computations are complicated by the
additional nonzero paths involving the arrow $f_2$. By contrast, the
arrow-removal theorem reduces the problem to the quotient algebra
$
\Gamma=\Lambda/\Lambda\overline f_2\Lambda,
$
whose projective modules and syzygies are considerably easier to
describe. Thus the reduction avoids the more involved direct
calculation over $\Lambda$.
 \end{example}

\section{Derived delooping levels and recollements}
\label{sec:recollements}

In this section, we first work with abelian categories having enough
projective objects. The corresponding results for finite-dimensional
algebras are then obtained by applying the categorical results to
recollements induced by idempotents.

\begin{definition}\label{def:recollement}
{\rm\cite[Definition~2.1]{PH}}
A \emph{recollement} of abelian categories $\cA$, $\cB$, and $\cC$ is a
diagram
\begin{equation}\label{recollement}
\begin{tikzcd}[column sep=5.5em]
\cA
\arrow[r,"i"]
&
\cB
\arrow[l,bend left=40,"p"]
\arrow[l,bend right=40,swap,"q"]
\arrow[r,"e"]
&
\cC
\arrow[l,bend left=40,"r"]
\arrow[l,bend right=40,swap,"l"]
\end{tikzcd}
\end{equation}
satisfying the following conditions:
\begin{enumerate}
\item[\rm(RA1)] $(q,i,p)$ and $(l,e,r)$ are adjoint triples;
\item[\rm(RA2)] The functors $i$, $l$, and $r$ are fully faithful.

\item[\rm(RA3)] $\operatorname{Im}i=\operatorname{Ker}e.
$
\end{enumerate}
We denote this recollement by
$
\mathsf{R}(\cA,\cB,\cC).
$
\end{definition}

The following proposition collects some standard consequences of the
definition; see, for example, \cite{FP,PH,PV}.

\begin{proposition}\label{P2.2}
{\rm\cite{FP,PH,PV}}
Let $\mathsf{R}(\cA,\cB,\cC)$ be a recollement. Then the following
statements hold.
\begin{enumerate}[$(a)$]
\item The functors $i$ and $e$ are exact.

\item There are natural isomorphisms
$q l\cong0$  and $p r\cong0.$

\item The following units and counits are natural isomorphisms:
\[
\operatorname{Id}_{\cA}
\xrightarrow{\sim}
p i,
\qquad
\operatorname{Id}_{\cC}
\xrightarrow{\sim}
e l,
\]
and
\[
q i
\xrightarrow{\sim}
\operatorname{Id}_{\cA},
\qquad
e r
\xrightarrow{\sim}
\operatorname{Id}_{\cC}.
\]

\item For every $X\in\cB$, there exist objects
$Y_X,Y'_X\in\cA$ and exact sequences
\[
0
\longrightarrow
i(Y_X)
\longrightarrow
l e(X)
\longrightarrow
X
\longrightarrow
i q(X)
\longrightarrow
0
\]
and
\[
0
\longrightarrow
i p(X)
\longrightarrow
X
\longrightarrow
r e(X)
\longrightarrow
i(Y'_X)
\longrightarrow
0.
\]
The maps involving $le(X)$ and $re(X)$ are induced by the counit of
$l\dashv e$ and the unit of $e\dashv r$, respectively.
\end{enumerate}
\end{proposition}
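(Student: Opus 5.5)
The plan is to derive each of (a)--(d) from the adjunction axioms (RA1)--(RA3), using the standard arguments of \cite{FP,PH}.

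For (a), $i$ has both a left adjoint $q$ and a right adjoint $p$. Hence $i$ preserves all limits and colimits, in particular kernels and cokernels, so it is exact. The same argument applies to $e$, which sits in the middle of the adjoint triple $(l,e,r)$.

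For (c), we use the standard fact that for an adjunction $F\dashv G$, the right adjoint $G$ is fully faithful if and only if the counit $FG\to\operatorname{Id}$ is an isomorphism. Dually, the left adjoint $F$ is fully faithful if and only if the unit $\operatorname{Id}\to GF$ is an isomorphism. By (RA2), $i$ is fully faithful. As the right adjoint in $q\dashv i$, this gives $qi\xrightarrow{\sim}\operatorname{Id}_{\cA}$. As the left adjoint in $i\dashv p$, it gives $\operatorname{Id}_{\cA}\xrightarrow{\sim}pi$. Likewise $l$ fully faithful in $l\dashv e$ gives $\operatorname{Id}_{\cC}\xrightarrow{\sim}el$, and $r$ fully faithful in $e\dashv r$ gives $er\xrightarrow{\sim}\operatorname{Id}_{\cC}$.

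For (b), take $C\in\cC$ and $A\in\cA$. Then
\[
\Hom_{\cA}(qlC,A)\cong\Hom_{\cB}(lC,iA)\cong\Hom_{\cC}(C,eiA)=0,
\]
since $eiA=0$ by (RA3). By Yoneda, $qlC=0$. Symmetrically,
\[
\Hom_{\cA}(A,prC)\cong\Hom_{\cB}(iA,rC)\cong\Hom_{\cC}(eiA,C)=0,
\]
so $prC=0$.

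Part (d) is the main step. Take $X\in\cB$ and the counit $\varepsilon_X\colon leX\to X$. First, we identify its cokernel. Applying the exact functor $e$ to $\varepsilon_X$ gives a morphism $eleX\to eX$. By the triangle identity together with $el\cong\operatorname{Id}_{\cC}$, this morphism is an isomorphism. Since $e$ is exact, it follows that $e(\Ker\varepsilon_X)=0$ and $e(\Coker\varepsilon_X)=0$. By (RA3), both $\Ker\varepsilon_X$ and $\Coker\varepsilon_X$ therefore lie in $\operatorname{Im}i$. Write $\Ker\varepsilon_X\cong i(Y_X)$.

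It remains to show $\Coker\varepsilon_X\cong iq(X)$. Let $Z\in\cA$ and apply $\Hom_{\cB}(-,iZ)$ to the right exact sequence $leX\to X\to\Coker\varepsilon_X\to0$. Because $\Hom_{\cB}(leX,iZ)\cong\Hom_{\cC}(eX,eiZ)=0$, we get
\[
\Hom_{\cB}(\Coker\varepsilon_X,iZ)\cong\Hom_{\cB}(X,iZ)\cong\Hom_{\cA}(qX,Z).
\]
Since $\Coker\varepsilon_X=iW$ for some $W\in\cA$, the left-hand side equals $\Hom_{\cA}(W,Z)$ by full faithfulness of $i$. Yoneda then gives $W\cong qX$, hence $\Coker\varepsilon_X\cong iqX$. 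One should also check that the induced map $X\to iqX$ is the unit of $q\dashv i$; this is compatible with the Yoneda identification.

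The second sequence is obtained dually, using the unit $\eta_X\colon X\to reX$, the adjunction $i\dashv p$, and $\Hom_{\cB}(iZ,reX)\cong\Hom_{\cC}(eiZ,eX)=0$. This identifies $\Ker\eta_X\cong ipX$, while $\Coker\eta_X\cong i(Y'_X)$. The main obstacle is this careful bookkeeping of adjunction identities and the Yoneda identifications; the other parts are formal.
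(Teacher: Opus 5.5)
Your proposal is correct. Each of (a)--(d) follows from (RA1)--(RA3) as you describe, and the Yoneda identifications $\Coker\varepsilon_X\cong iq(X)$ and $\Ker\eta_X\cong ip(X)$ are legitimate because the relevant Hom-isomorphisms are natural in $Z$. The paper gives no proof of its own and simply cites \cite{FP,PH,PV}; your argument is essentially the standard one given there, in particular in \cite{PH}. Two small remarks: the triangle-identity step needs the unit $\operatorname{Id}_{\cC}\to el$ itself to be invertible, not just some isomorphism $el\cong\operatorname{Id}_{\cC}$, and this is what your part (c) supplies; and identifying the map $X\to iq(X)$ with the unit of $q\dashv i$ is not required by the statement.
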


We now give quantitative bounds for derived delooping levels in a
recollement.  For the second bound below, if $S$ is a simple object of
$\cB$, we fix an object $Y'_S\in\cA$ occurring in the second exact
sequence of Proposition~\ref{P2.2}(d).

\begin{theorem}\label{T1}
Let \eqref{recollement} be a recollement of length categories with
finitely many isomorphism classes of simple objects and enough
projective objects, and fix $k\geq1$. Set
\[ L_l = \max\bigl\{ \ell_{\cB}(lT) \mid T\in\operatorname{simp}\cC \bigr\} \quad \text{and}\quad L_r =
\max\bigl\{
\ell_{\cA}(Y'_S)
\mid
S\in\operatorname{simp}\cB
\bigr\}.
\]
Then the following statements hold.
\begin{enumerate}[$(a)$]

\item If $\pd(e)<\infty$, then $k\text{-}\ddell(\cC)
\leq
\pd(e)
+
L_l\bigl(k\text{-}\ddell(\cB)+1\bigr)-1.
$
\item If $q$ is exact, then $k\text{-}\ddell(\cA)
\leq
k\text{-}\ddell(\cB).$

\item Suppose that $r$ is exact and that $\pd(i)<\infty,$ $\pd(r)<\infty.$ 
Then
\[
\begin{aligned}
k\text{-}\ddell(\cB)
\leq
\max\Bigl\{&
\pd(i)+k\text{-}\ddell(\cA),\\
&
\pd(i)+\pd(r)+k\text{-}\ddell(\cC)
+
L_r\bigl(k\text{-}\ddell(\cA)+1\bigr)
\Bigr\}.
\end{aligned}
\]
In particular, if both
$k\text{-}\ddell(\cA)$ and $k\text{-}\ddell(\cC)$ are finite, then
$k\text{-}\ddell(\cB)$ is finite.
\end{enumerate}
\end{theorem}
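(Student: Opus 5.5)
The plan is to bound $k\text{-}\ddell_{\cB}$, $k\text{-}\ddell_{\cA}$ or $k\text{-}\ddell_{\cC}$ of a single simple object in each part, and then take the supremum over the finitely many simples. Throughout, the tools are Proposition~\ref{thm:exact-functor-transfer} for exact functors, the length estimate Lemma~\ref{lem:length-bound}, and the isomorphisms of Proposition~\ref{P2.2}. Each part also needs a small check that the relevant functor sends simple objects to simple objects (or to zero).

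For $(a)$, let $T$ be a simple object of $\cC$. By Proposition~\ref{P2.2}(c) we have $T\cong elT$, and $e$ is exact by Proposition~\ref{P2.2}(a). Proposition~\ref{thm:exact-functor-transfer} applied to $e$ gives
\[
k\text{-}\ddell_{\cC}T\leq \pd(e)+k\text{-}\ddell_{\cB}(lT).
\]
Assume $d=k\text{-}\ddell(\cB)<\infty$; otherwise there is nothing to prove. Since $\ell_{\cB}(lT)\leq L_l$, Lemma~\ref{lem:length-bound} gives $k\text{-}\ddell_{\cB}(lT)\leq L_l(d+1)-1$. Taking the supremum over $T$ yields the bound in $(a)$.

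For $(b)$, first note that $q$ is left adjoint to the exact functor $i$, so it preserves projective objects. Since $q$ is exact by hypothesis, $\pd(q)=0$. Next I show that $iS$ is simple for every simple $S\in\cA$. The subcategory $\operatorname{Im}i=\Ker e$ is closed under subobjects because $e$ is exact. So any subobject of $iS$ has the form $iU$, and $U\to S$ is a monomorphism because $i$ is fully faithful and exact. Hence $iS$ is simple. Using $S\cong qiS$ from Proposition~\ref{P2.2}(c) and Proposition~\ref{thm:exact-functor-transfer} for $q$, I get
\[
k\text{-}\ddell_{\cA}S\leq k\text{-}\ddell_{\cB}(iS)\leq k\text{-}\ddell(\cB).
\]

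For $(c)$, let $S$ be simple in $\cB$, and set $a=k\text{-}\ddell(\cA)$ and $c=k\text{-}\ddell(\cC)$, both assumed finite. I split into two cases.

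If $eS=0$, then $S\cong iY$ for some $Y\in\cA$. This $Y$ is simple, since $i$ is fully faithful, exact and reflects subobjects, as in $(b)$. Proposition~\ref{thm:exact-functor-transfer} for $i$ then gives $k\text{-}\ddell_{\cB}S\leq\pd(i)+a$.

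If $eS\neq0$, consider the second sequence of Proposition~\ref{P2.2}(d). Here $ipS$ is a subobject of the simple object $S$. If $ipS=S$, then $eS=eipS=0$, which is excluded, so $ipS=0$. This leaves a short exact sequence
\[
0\to S\to reS\to iY'_S\to 0 .
\]
The second inequality of Proposition~\ref{prop:extension-estimate} gives
\[
k\text{-}\ddell_{\cB}S\leq k\text{-}\ddell_{\cB}(reS)+k\text{-}\ddell_{\cB}(iY'_S)+1.
\]
I bound the two terms separately.
\begin{itemize}
\item For the first term, $e$ is exact and $S$ is simple, so $eS$ is a nonzero quotient-image of a simple object and hence simple. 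Proposition~\ref{thm:exact-functor-transfer} applied to the exact functor $r$ gives $k\text{-}\ddell_{\cB}(reS)\leq\pd(r)+c$.
\item For the second term, Proposition~\ref{thm:exact-functor-transfer} applied to $i$, together with Lemma~\ref{lem:length-bound} in $\cA$ and $\ell_{\cA}(Y'_S)\leq L_r$, gives $k\text{-}\ddell_{\cB}(iY'_S)\leq\pd(i)+L_r(a+1)-1$. If $Y'_S=0$, this term is simply $0$.
\end{itemize}
Adding the bounds gives $\pd(i)+\pd(r)+c+L_r(a+1)$. Taking the maximum over the two cases proves $(c)$.

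I expect the main obstacle to be the simplicity and vanishing claims: that $i$ reflects simple objects, that $eS$ is simple whenever it is nonzero, and that $ipS=0$ when $eS\neq0$. These are where the recollement axioms are actually used, namely that $\operatorname{Im}i=\Ker e$ is a Serre subcategory; the rest is bookkeeping with the earlier estimates.
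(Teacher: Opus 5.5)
Your plan follows the paper's proof step for step. It uses the same transfer estimates in $(a)$ and $(b)$. In $(c)$ it has the same case split on $eS$, the same reduction to $ipS=0$, and the same use of the second inequality of Proposition~\ref{prop:extension-estimate} on $0\to S\to reS\to iY'_S\to0$.

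\textbf{The gap.} The step that does not hold up is the claim that $eS$ is simple when $eS\neq0$. You need it to bound $k\text{-}\ddell_{\cC}(eS)$ by $k\text{-}\ddell(\cC)$.
\begin{enumerate}
\item Your reason, that $eS$ is ``a nonzero quotient-image of a simple object'' because $e$ is exact, is not valid. The object $eS$ is not a quotient of $S$, and exact functors do not preserve simplicity in general. For example, the exact functor $V\mapsto V\oplus V$ on $\modcat\text{-}\Bbbk$ sends $\Bbbk$ to $\Bbbk^2$.
\item The ingredient you name at the end, that $\Ker e=\operatorname{Im}i$ is a Serre subcategory, does not repair this. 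The functor $V\mapsto V\oplus V$ is faithful, so its kernel is the zero subcategory, which is trivially Serre, yet it still fails to preserve simples.
\end{enumerate}
What is actually needed is that every subobject of $eS$ is the image under $e$ of a subobject of $S$. This comes from the adjoint $l$ with $el\cong\id_{\cC}$, and it is what the paper proves. Take a nonzero monomorphism $u\colon U\to eS$ and set $f=\varepsilon_S\circ l(u)\colon lU\to S$. Naturality of the unit $\eta$ and the triangle identity give $e(f)\circ\eta_U=u$, so $f\neq0$. Since $S$ is simple, $f$ is an epimorphism. Hence $e(f)$ is an epimorphism, and so is $u=e(f)\circ\eta_U$, because $\eta_U$ is an isomorphism. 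Thus $u$ is an isomorphism. Alternatively, you may cite that $e$ induces an equivalence $\cB/\Ker e\simeq\cC$, but that fact also rests on the adjoints, not on the Serre property alone.

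\textbf{A smaller slip.} When $Y'_S=0$, your sum gives $\pd(r)+c+1$. This exceeds the stated bound when $L_r=0$ and $\pd(i)=0$, which is exactly the situation used later in Corollary~\ref{C1}$(c)$. In that case, observe instead that $S\cong reS$. This gives $k\text{-}\ddell_{\cB}S\leq\pd(r)+c$ directly, with no extension estimate. The paper's proof passes over the same point: it applies Lemma~\ref{lem:length-bound} to a possibly zero $Y'_S$ without comment.
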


\begin{proof}
\noindent
$(a)$ Let $T$ be a simple object of $\cC$. Since $el\cong\operatorname{Id}_{\cC},$ 
we have
$
T\cong e(lT).
$
By Lemma~\ref{lem:length-bound},
\[
k\text{-}\ddell_{\cB}(lT)
\leq
\ell_{\cB}(lT)
\bigl(k\text{-}\ddell(\cB)+1\bigr)-1.
\]
Proposition~\ref{thm:exact-functor-transfer} therefore gives
\[
\begin{aligned}
k\text{-}\ddell_{\cC}T
&=
k\text{-}\ddell_{\cC}(e(lT))\\
&\leq
\pd(e)+k\text{-}\ddell_{\cB}(lT)\\
&\leq
\pd(e)
+
\ell_{\cB}(lT)
\bigl(k\text{-}\ddell(\cB)+1\bigr)-1\\
&\leq
\pd(e)
+
L_l\bigl(k\text{-}\ddell(\cB)+1\bigr)-1.
\end{aligned}
\]
Taking the maximum over all simple objects of $\cC$ yields
\[
k\text{-}\ddell(\cC)
\leq
\pd(e)
+
L_l\bigl(k\text{-}\ddell(\cB)+1\bigr)-1.
\]

\medskip
\noindent
$(b)$ Since $q$ is left adjoint to the exact functor $i$, it preserves
projective objects; hence $\pd(q)=0.$ 
Let $T$ be a simple object of $\cA$. We first note that $iT$ is simple
in $\cB$. Indeed, if $X\hookrightarrow iT$ is a subobject, then the
exactness of $e$ gives
$eX\hookrightarrow eiT=0.$ 
Thus $X\in\ker e=\operatorname{Im}i$. Since $i$ is fully faithful and
exact, the subobjects of $iT$ lying in $\operatorname{Im}i$ correspond
to subobjects of $T$. Hence $X=0$ or $X=iT$. Using $qi\cong\operatorname{Id}_{\cA}$ and Proposition~\ref{thm:exact-functor-transfer}, we obtain
\[
\begin{aligned}
k\text{-}\ddell_{\cA}T=
k\text{-}\ddell_{\cA}(qiT) \leq
k\text{-}\ddell_{\cB}(iT)
\leq
k\text{-}\ddell(\cB).
\end{aligned}
\]
Taking the maximum over all simple objects $T$ of $\cA$ gives
$
k\text{-}\ddell(\cA)
\leq
k\text{-}\ddell(\cB).
$

\medskip
\noindent
$(c)$ Put $d_{\cA}=k\text{-}\ddell(\cA),$ $d_{\cC}=k\text{-}\ddell(\cC).$
Let $S$ be a simple object of $\cB$. Suppose first that $eS=0$. Then $S\in\ker e=\operatorname{Im}i.$
Hence $S\simeq iT$ for some $T\in\cA$. Since $i$ is fully faithful and
exact and $S$ is simple, $T$ is simple. Therefore,
Proposition~\ref{thm:exact-functor-transfer} gives
\[
k\text{-}\ddell_{\cB}S
=
k\text{-}\ddell_{\cB}(iT)
\leq
\pd(i)+k\text{-}\ddell_{\cA}T
\leq
\pd(i)+d_{\cA}.
\]

Assume now that $eS\neq0$. We claim that $eS$ is simple in $\cC$.
Let $0\neq u:U\longrightarrow eS$ be a monomorphism. Denote by
\[
\eta:\operatorname{Id}_{\cC}\longrightarrow el
\qquad\text{and}\qquad
\varepsilon:le\longrightarrow\operatorname{Id}_{\cB}
\]
the unit and counit of the adjunction $l\dashv e$, respectively.
Since $l$ is fully faithful, $\eta$ is a natural isomorphism. Applying $l$ to $u$ and composing with the counit gives a morphism
\[
f
:=
\varepsilon_S\circ l(u)
:
lU\longrightarrow S.
\]
We show that $f\neq0$. Using the naturality of $\eta$, we have the following commutative diagram 
\[
\begin{tikzcd}
U \arrow[r,"\eta_U", "\cong" '] \arrow[d,"u"']
&
el(U) \arrow[d,"el(u)"]
\\
eS \arrow[r,"\eta_{eS}","\cong" ']
&
el(eS),
\end{tikzcd}
\]
i.e. 
 $ el(u)\circ\eta_U = \eta_{eS}\circ u.$ 
Hence
\[
\begin{aligned}
e(f)\circ\eta_U=
e(\varepsilon_S)\circ el(u)\circ\eta_U=
e(\varepsilon_S)\circ\eta_{eS}\circ u=
u,
\end{aligned}
\]
where the last equality follows from the triangle identity $ e(\varepsilon_S)\circ\eta_{eS}=1_{eS}.$ 
Since $u\neq0$ and $\eta_U$ is an isomorphism, it follows that $e(f)\neq0,$  and therefore $f\neq0$.
 Since $S$ is simple in $\cB$, every nonzero morphism into $S$ is an
epimorphism. Thus $f:lU\to S$ is an epimorphism. As $e$ is exact,
$
e(f):elU\longrightarrow eS
$
is also an epimorphism. Since $\eta_U$ is isomorphic,  the equality
$u=e(f)\circ\eta_U$ 
shows that $u$ is epic. 
Hence $u$ is both a monomorphism and an epimorphism, and therefore an
isomorphism. Consequently, $eS$ is simple in $\cC$.

By Proposition~\ref{P2.2}(d), there is an exact sequence
\[
0
\longrightarrow
ip(S)
\longrightarrow
S
\longrightarrow
r(eS)
\longrightarrow
i(Y'_S)
\longrightarrow
0.
\]
Since $eS\neq0$, the monomorphism $ip(S)\to S$ cannot be nonzero;
otherwise, the simplicity of $S$ would imply
$ip(S)\simeq S$, and hence $eS=0$. Thus $p(S)=0$, and the preceding
sequence reduces to a short exact sequence
\[
0
\longrightarrow
S
\longrightarrow
r(eS)
\longrightarrow
i(Y'_S)
\longrightarrow
0.
\]
Since $eS$ is simple, $k\text{-}\ddell_{\cC}(eS)\leq d_{\cC},$
and therefore Proposition~\ref{thm:exact-functor-transfer} gives
\[
k\text{-}\ddell_{\cB}r(eS)
\leq
\pd(r)+d_{\cC}.
\]
On the other hand, Lemma~\ref{lem:length-bound} gives
\[
k\text{-}\ddell_{\cA}Y'_S
\leq
\ell_{\cA}(Y'_S)(d_{\cA}+1)-1,
\]
and hence
\[
\begin{aligned}
k\text{-}\ddell_{\cB}i(Y'_S)
&\leq
\pd(i)+k\text{-}\ddell_{\cA}Y'_S\\
&\leq
\pd(i)
+
\ell_{\cA}(Y'_S)(d_{\cA}+1)-1.
\end{aligned}
\]
Applying Proposition~\ref{prop:extension-estimate} to
\[
0
\longrightarrow
S
\longrightarrow
r(eS)
\longrightarrow
i(Y'_S)
\longrightarrow
0,
\]
we obtain
\[
\begin{aligned}
k\text{-}\ddell_{\cB}S
&\leq
k\text{-}\ddell_{\cB}r(eS)
+
k\text{-}\ddell_{\cB}i(Y'_S)
+1\\
&\leq
\pd(i)+\pd(r)+d_{\cC}
+
\ell_{\cA}(Y'_S)(d_{\cA}+1)\\
&\leq
\pd(i)+\pd(r)+d_{\cC}
+
L_r(d_{\cA}+1).
\end{aligned}
\]
Combining the two cases and taking the maximum over all simple objects
$S$ of $\cB$ yields
\[
\begin{aligned}
k\text{-}\ddell(\cB)
\leq
\max\Bigl\{&
\pd(i)+d_{\cA},\\
&
\pd(i)+\pd(r)+d_{\cC}
+
L_r(d_{\cA}+1)
\Bigr\}.
\end{aligned}
\]
\end{proof}

We next study recollements for which the $\cB$-relative projective global dimension of $\cA$ is at most one.
\begin{theorem}\label{thm:2}
Let \eqref{recollement} be a recollement of length categories with
finitely many isomorphism classes of simple objects,  enough
projective objects and enough injective objects.  Assume that
\[
\operatorname{pgl.dim}_{\cB}\cA
:=
\sup\bigl\{
\pd_{\cB}i(A)
\mid
A\in\cA
\bigr\}
\leq1.
\]
Let $L_l$ and $L_r$ be the constants defined in
Theorem~\ref{T1}. Then $\operatorname{gl.dim}\cA\leq1.$ Moreover, for every $k\geq1$,
\[
k\text{-}\ddell(\cC)
\leq
L_l\bigl(k\text{-}\ddell(\cB)+1\bigr)-1
\quad \text{and}\quad k\text{-}\ddell(\cB)
\leq
k\text{-}\ddell(\cC)+2L_r+3.
\]
Consequently, $k\text{-}\ddell(\cB)<\infty
\quad\Longleftrightarrow\quad
k\text{-}\ddell(\cC)<\infty.$
\end{theorem}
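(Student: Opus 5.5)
The proof has four steps: show $\operatorname{gl.dim}\cA\leq1$, show $r$ is exact and $e$ preserves projectives, bound $\pd(r)$ by $2$, and then feed these into Theorem~\ref{T1}(a),(c). Throughout I write $d_{\cA}=k\text{-}\ddell(\cA)$ and $d_{\cC}=k\text{-}\ddell(\cC)$.

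\emph{Step 1: $\operatorname{gl.dim}\cA\leq1$.} The subcategory $\operatorname{Im}i=\Ker e$ is a Serre subcategory of $\cB$, because $e$ is exact. The functor $i$ is exact and fully faithful, and $\operatorname{Im}i$ is closed under extensions. Hence the map $\Ext^1_{\cA}(X,Y)\to\Ext^1_{\cB}(iX,iY)$ is an isomorphism and the map on $\Ext^2$ is injective. I will verify the injectivity on Yoneda $2$-extensions, using that subobjects and quotients of objects of $\operatorname{Im}i$ stay in $\operatorname{Im}i$. Since $\pd_{\cB}iX\leq1$, we get $\Ext^2_{\cA}(X,-)=0$ for every $X\in\cA$, so $\operatorname{gl.dim}\cA\leq1$.

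It follows that $\Omega_{\cA}X$ is projective for every $X$, so it is zero in $\underline{\cA}$. Hence $k\text{-}\dell_{\cA}X\leq1$, and by Remark~\ref{rem:relation-known-definition}, $d_{\cA}\leq1$.

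\emph{Step 2: $r$ is exact, so $e$ preserves projectives.} Fix $A\in\cA$. First I show that $\Hom_{\cB}(iA,rZ)=0$ and $\Ext^1_{\cB}(iA,rZ)=0$ for all $Z\in\cC$.
\begin{itemize}
\item The Hom-vanishing is adjunction: $\Hom_{\cB}(iA,rZ)\cong\Hom_{\cC}(eiA,Z)=0$.
\item For the $\Ext^1$-vanishing, embed $Z$ into an injective $I$ (using enough injectives) and set $Z'=I/Z$. Since $r$ is a right adjoint of the exact functor $e$, the object $rI$ is injective. Left exactness of $r$ gives $0\to rZ\to rI\to W\to0$ with $W\subseteq rZ'$. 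Then $\Hom(iA,W)\subseteq\Hom(iA,rZ')=0$, so $\Ext^1(iA,rZ)$ embeds into $\Ext^1(iA,rI)=0$.
\end{itemize}

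Now let $0\to X'\to X\to X''\to0$ be exact in $\cC$ and let $N$ be the image of $rX\to rX''$. Applying $e$ kills the cokernel, since $er\cong\id$. So there are exact sequences $0\to N\to rX''\to iA\to0$ and $0\to rX'\to rX\to N\to0$. From the second sequence and $\pd iA\leq1$ we get an exact sequence $0=\Ext^1(iA,rX)\to\Ext^1(iA,N)\to\Ext^2(iA,rX')=0$, so $\Ext^1(iA,N)=0$. Therefore the first sequence splits, and $iA$ embeds into $rX''$. Then $iA=0$ because $\Hom(iA,rX'')=0$. So $r$ is exact. Consequently $e$, being left adjoint to an exact functor, preserves projectives, and $\pd(e)=0$.

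\emph{Step 3: $\pd(r)\leq2$.} Let $Q\in\Proj\cC$. Then $lQ$ is projective in $\cB$, and $relQ\cong rQ$. Proposition~\ref{P2.2}(d) applied to $lQ$ gives an exact sequence $0\to ip(lQ)\to lQ\to rQ\to i(Y'_{lQ})\to0$. Both outer terms have projective dimension at most $1$ by hypothesis. Splitting at the image $M$ of $lQ\to rQ$, we get $\pd M\leq2$ from $0\to ip(lQ)\to lQ\to M\to0$. Then $\pd rQ\leq2$ from $0\to M\to rQ\to i(Y'_{lQ})\to0$.

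\emph{Step 4: the inequalities.} Theorem~\ref{T1}(a) with $\pd(e)=0$ gives the first inequality.
\begin{itemize}
\item Theorem~\ref{T1}(c) applies, with $r$ exact, $\pd(i)\leq1$ and $\pd(r)\leq2$.
\item Using $d_{\cA}\leq1$, its first term is $\pd(i)+d_{\cA}\leq2$.
\item Its second term is at most $1+2+d_{\cC}+2L_r$.
\item Both terms are at most $k\text{-}\ddell(\cC)+2L_r+3$, which is the second inequality.
\end{itemize}
The equivalence of finiteness follows from the two inequalities.

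I expect Step 2, the exactness of $r$, to be the main obstacle. The key input is the vanishing $\Ext^1_{\cB}(iA,r(-))=0$, which is exactly where the hypothesis of enough injectives is used. The $\Ext^2$-injectivity claim in Step 1 is the other point that needs care.
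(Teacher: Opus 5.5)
Your proof is correct. Its second half matches the paper: the bound $\pd(r)\le 2$ comes from splitting $0\to ip(lQ)\to lQ\to rQ\to i(Y'_{lQ})\to 0$, and you then feed $\pd(e)=0$, $\pd(i)\le 1$, $\pd(r)\le 2$ and $k\text{-}\ddell(\cA)\le 1$ into Theorem~\ref{T1}(a),(c).

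You differ from the paper in how you obtain the two structural inputs. The paper cites \cite[Proposition~3.5]{GPS2021} for two facts: $e$ preserves projectives, and $i$ is a homological embedding. It reads off $\operatorname{gl.dim}\cA\le 1$ from the second fact. It then deduces that $r$ is exact from the first fact, which uses enough projectives in $\cB$.

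You run the implication the other way and prove everything by hand. First you prove $\Ext^2$-injectivity for the extension-closed subcategory $\Ker e$. Next you prove $r$ exact directly, by combining $\Hom_{\cB}(iA,r(-))=0=\Ext^1_{\cB}(iA,r(-))$ with $\Ext^2_{\cB}(iA,-)=0$ to split off and kill the cokernel of $rX\to rX''$. Projective-preservation of $e$ then follows formally. The citation buys brevity. Your version is self-contained and shows exactly where $\operatorname{pgl.dim}_{\cB}\cA\le 1$ enters.

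Two simplifications are available.
\begin{enumerate}
\item The $\Ext^2$-injectivity is cleanest via the long exact sequence rather than a chase through Yoneda equivalences. Write $\xi=\delta(\alpha)$, where $\delta$ is the connecting map of $0\to K\to E_0\to X\to 0$ and $\alpha\in\Ext^1_{\cA}(K,Y)$. If $i\xi=0$, then $i\alpha$ lifts to $\Ext^1_{\cB}(iE_0,iY)$. The isomorphisms on $\Ext^1$ pull this lift back to $\cA$, so $\xi=0$. This needs only extension-closure of $\Ker e$, not closure under subobjects and quotients.
\item Enough injectives are not actually needed for $\Ext^1_{\cB}(iA,rZ)=0$. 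Take an extension $0\to rZ\xrightarrow{j}E\to iA\to 0$ and let $\eta$ be the unit of $e\dashv r$. Since $e(j)$ is invertible, $\eta_E\circ j=re(j)\circ\eta_{rZ}$ is an isomorphism, so $j$ splits.
\end{enumerate}
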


\begin{proof}
We first prove that $\operatorname{gl.dim}\cA\leq1.$ By \cite[Proposition~3.5]{GPS2021}, the assumption
$\operatorname{pgl.dim}_{\cB}\cA\leq1$
implies that $e$ preserves projective objects and that
$i\colon\cA\to\cB$ is a homological embedding, i.e.,  for every
$X,Y\in\cA$ and every $n\geq0$, the canonical map
\[
\Ext_{\cA}^{n}(X,Y)
\longrightarrow
\Ext_{\cB}^{n}(iX,iY)
\]
is an isomorphism. Let $A\in\cA$. By assumption,
$\pd_{\cB}i(A)\leq1.$ 
Hence
\[
\Ext_{\cB}^{n}(i(A),i(X))=0
\qquad
\text{for every }X\in\cA\text{ and every }n\geq2.
\]
The homological-embedding property therefore gives
\[
\Ext_{\cA}^{n}(A,X)
\cong
\Ext_{\cB}^{n}(i(A),i(X))
=
0
\]
for every $X\in\cA$ and every $n\geq2$. Consequently,
 $\pd_{\cA}A\leq1.$ 
Since $A$ was arbitrary, we conclude that
$\operatorname{gl.dim}\cA\leq1.$ 
In particular, $k\text{-}\ddell(\cA)\leq1.$ 

Since $e$ preserves projective objects, 
Theorem~\ref{T1}(a) therefore yields
\[
k\text{-}\ddell(\cC)
\leq
L_l\bigl(k\text{-}\ddell(\cB)+1\bigr)-1.
\]
We next establish the converse bound. The relative projective-dimension
assumption gives
\[
\pd(i)
=
\sup\bigl\{
\pd_{\cB}i(P)
\mid
P\in\Proj\cA
\bigr\}
\leq1.
\]
 Since $r$ is a right adjoint of $e$
and $e$ preserves projective objects, $r$ is exact. To apply Theorem~\ref{T1}(c), it remains to estimate $\pd(r)$.  Let $Q\in\Proj\cC$. Since $l$ is left
adjoint to the exact functor $e$, the object $P:=l(Q)$ is projective in $\cB$. Moreover, $e(P)=el(Q)\cong Q.$ 
Applying the standard recollement sequence to $P$, we obtain an exact
sequence
\[
0
\longrightarrow
ip(P)
\longrightarrow
P
\longrightarrow
r(Q)
\longrightarrow
i(A')
\longrightarrow
0
\]
for some $A'\in\cA$. Set $ K=\operatorname{Im}\bigl(P\longrightarrow r(Q)\bigr).$ 
Then there are short exact sequences
\[
0
\longrightarrow
ip(P)
\longrightarrow
P
\longrightarrow
K
\longrightarrow
0
\]
and
\[
0
\longrightarrow
K
\longrightarrow
r(Q)
\longrightarrow
i(A')
\longrightarrow
0.
\]
By the relative projective-dimension assumption, $\pd_{\cB}ip(P)\leq1,$  and $\pd_{\cB}i(A')\leq1.$
Since $P$ is projective, the first short exact sequence gives
\[
\pd_{\cB}K
\leq
\max\bigl\{
\pd_{\cB}P,\,
\pd_{\cB}ip(P)+1
\bigr\}
\leq2.
\]
The second short exact sequence then gives
\[
\pd_{\cB}r(Q)
\leq
\max\bigl\{
\pd_{\cB}K,\,
\pd_{\cB}i(A')
\bigr\}
\leq2.
\]
Taking the supremum over all $Q\in\Proj\cC$, we obtain $\pd(r)\leq2.$  Applying Theorem~\ref{T1}(c), together with $k\text{-}\ddell(\cA)\leq1,$  $\pd(i)\leq1,$ and $\pd(r)\leq2,$
gives
\[
\begin{aligned}
k\text{-}\ddell(\cB)
&\leq
\max\Bigl\{
2,\,
1+2+k\text{-}\ddell(\cC)+2L_r
\Bigr\}\\
&=
k\text{-}\ddell(\cC)+2L_r+3.
\end{aligned}
\]

The asserted equivalence of finiteness follows immediately.

\end{proof}

In what follows, all modules are right modules. We apply the above theorem to
finite-dimensional algebras.   Let $\Lambda$ be a finite-dimensional algebra, and let $e\in\Lambda$ be an
idempotent element. To avoid confusion with the idempotent $e$, we denote the
recollement functor
$e:\operatorname{mod}\Lambda
\longrightarrow
\operatorname{mod}e\Lambda e
$ by $\mathsf e$ throughout in the following.  Put
\[
\overline\Lambda=\Lambda/\Lambda e\Lambda,
\qquad E=e\Lambda e.
\]
The standard recollement of finitely generated right module categories has
\[
i=\operatorname{inc},\quad
\mathsf e (-)=\Hom_\Lambda(e\Lambda,-)\simeq -\otimes_\Lambda\Lambda e,
\quad q=-\otimes_\Lambda\overline\Lambda,
\quad r=\Hom_E(\Lambda e,-).
\]
The projective dimensions of these functors are
\[
\pd(e)=\pd_E(\Lambda e),\ \
\pd(i)=\pd_\Lambda\overline{\Lambda},
\ \
\text{and}\ \
\pd(r)
=
\pd_\Lambda\Hom_E(\Lambda e,E).
\]
Here $\overline{\Lambda}$ and
$\Hom_E(\Lambda e,E)$ are regarded as right $\Lambda$-modules in the
natural way.
Moreover, $q$ is exact when ${}_\Lambda\overline\Lambda$ is projective, and
$r$ is exact when $(\Lambda e)_E$ is projective.

For this recollement, write
\[
L_l(e)
=
\max\bigl\{
\ell_{\Lambda}(lT)
\mid
T\in\operatorname{simp}E
\bigr\}
 \quad \text{and}\quad 
L_r(e)
=
\max\bigl\{
\ell_{\overline{\Lambda}}(Y'_S)
\mid
S\in\operatorname{simp}\Lambda
\bigr\},
\]
where $Y'_S$ is determined by the standard recollement sequence
\[
0\longrightarrow ip(S)
\longrightarrow S
\longrightarrow re(S)
\longrightarrow i(Y'_S)
\longrightarrow0.
\]

\begin{corollary}[Derived-delooping vertex removal]
\label{cor:idempotent-recollement}
Let $\Lambda$ be a finite-dimensional algebra with an idempotent $e$,
and fix $k\geq1$. 
\begin{enumerate}[(a)]

\item If $\pd_E(\Lambda e)<\infty$, then
$k\text{-}\ddell E
\leq
\pd_E(\Lambda e)
+
L_l(e)\bigl(k\text{-}\ddell\Lambda+1\bigr)-1.
$

\item If ${}_\Lambda\overline{\Lambda}$ is projective, then
$
k\text{-}\ddell\overline{\Lambda}
\leq
k\text{-}\ddell\Lambda.
$

\item Suppose that $(\Lambda e)_E$ is projective and
$\pd_\Lambda\overline{\Lambda}<\infty,$ $\pd_\Lambda\Hom_E(\Lambda e,E)<\infty.$ Then
\[
\begin{aligned}
k\text{-}\ddell\Lambda
\leq
\max\Bigl\{&
\pd_\Lambda\overline{\Lambda}
+
k\text{-}\ddell\overline{\Lambda}, \pd_\Lambda\overline{\Lambda}
+
\pd_\Lambda\Hom_E(\Lambda e,E)\\
&\qquad\qquad\qquad\qquad
+
k\text{-}\ddell E 
+
L_r(e)\bigl(k\text{-}\ddell\overline{\Lambda}+1\bigr)
\Bigr\}.
\end{aligned}
\]
\item Assume that $\sup\bigl\{
\pd_\Lambda S
\mid
S\text{ is a simple }\overline{\Lambda}\text{-module}
\bigr\}
\leq1.$ 
Then
\[
k\text{-}\ddell E
\leq
L_l(e)\bigl(k\text{-}\ddell\Lambda+1\bigr)-1\ 
\text{and}\ 
k\text{-}\ddell\Lambda
\leq
k\text{-}\ddell E+2L_r(e)+3.
\]
\end{enumerate}
\end{corollary}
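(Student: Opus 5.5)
The plan is to specialize Theorems~\ref{T1} and~\ref{thm:2} to the standard idempotent recollement
\[
(\operatorname{mod}\overline\Lambda,\ \operatorname{mod}\Lambda,\ \operatorname{mod}E),
\]
with $i=\operatorname{inc}$, $\mathsf e=\Hom_\Lambda(e\Lambda,-)$, $q=-\otimes_\Lambda\overline\Lambda$ and $r=\Hom_E(\Lambda e,-)$. All three categories are module categories of finite-dimensional algebras. They are therefore length categories with finitely many simple objects and enough projective and injective objects, so the standing hypotheses hold. The main work is to translate each functorial hypothesis into the module-theoretic condition in the statement.

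First record the projective-dimension translations.

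Since $\mathsf e(P)\in\operatorname{add}(\Lambda e)$ for $P\in\operatorname{proj}\Lambda$, we get $\pd(\mathsf e)=\pd_E(\Lambda e)$. Every projective $\overline\Lambda$-module lies in $\operatorname{add}\overline\Lambda$, so $\pd(i)=\pd_\Lambda\overline\Lambda$. Likewise $r$ is additive and $\operatorname{proj}E=\operatorname{add}E$, so $\pd(r)=\pd_\Lambda\Hom_E(\Lambda e,E)$.

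For exactness, $q=-\otimes_\Lambda\overline\Lambda$ is exact when ${}_\Lambda\overline\Lambda$ is projective (hence flat). The functor $r=\Hom_E(\Lambda e,-)$ is exact when $(\Lambda e)_E$ is projective.

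Finally, the constants $L_l,L_r$ of Theorem~\ref{T1} become exactly $L_l(e)$ and $L_r(e)$ by definition.

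With these in hand, parts (a), (b), (c) follow by direct substitution into Theorem~\ref{T1}(a), (b), (c) respectively.

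For (d), I would verify the hypothesis $\operatorname{pgl.dim}_{\operatorname{mod}\Lambda}(\operatorname{mod}\overline\Lambda)\leq1$ of Theorem~\ref{thm:2}. This means showing $\pd_\Lambda A\leq1$ for every $\overline\Lambda$-module $A$. The natural route is induction on the composition length of $A$:

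\begin{itemize}
\item Every $\overline\Lambda$-module has a composition series whose factors are simple $\overline\Lambda$-modules.
\item These factors are simple $\Lambda$-modules annihilated by $\Lambda e\Lambda$, so each has $\pd_\Lambda\leq 1$ by hypothesis.
\item For a short exact sequence $0\to A'\to A\to S\to0$ with $\pd_\Lambda A'\leq1$ and $\pd_\Lambda S\leq1$, we get $\pd_\Lambda A\leq\max\{\pd A',\pd S\}\leq1$.
\end{itemize}

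This is the standard horseshoe estimate, or equivalently the vanishing of $\Ext^2_\Lambda(A,-)$ from the long exact sequence. Theorem~\ref{thm:2} then gives both displayed inequalities in (d).

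The only points needing care are minor identifications. One is that $\overline\Lambda$-simples coincide with $\Lambda$-simples killed by $e$. The other is that $\pd(r)$ is computed on $\operatorname{add}E$ alone, which uses only additivity of $r$. So I expect no genuine obstacle: the step requiring the most attention is the induction in (d), and that one is routine.
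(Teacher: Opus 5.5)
Your proposal is correct and follows the paper's proof. Parts (a)--(c) come from identifying $\pd(\mathsf e)$, $\pd(i)$, $\pd(r)$ with $\pd_E(\Lambda e)$, $\pd_\Lambda\overline\Lambda$, $\pd_\Lambda\Hom_E(\Lambda e,E)$ via $\operatorname{proj}=\operatorname{add}$ of the regular module and substituting into Theorem~\ref{T1}; part (d) follows from Theorem~\ref{thm:2} once $\operatorname{pgl.dim}_{\Lambda}\overline\Lambda\leq1$ is checked. The only difference is that you make explicit the induction on composition length, using $\Ext^2_\Lambda$-vanishing along $0\to A'\to A\to S\to0$, that extends the hypothesis from simple $\overline\Lambda$-modules to all $\overline\Lambda$-modules, whereas the paper just asserts this step.
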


\begin{proof}
Parts~(a)--(c) follow directly from Theorem~\ref{T1}, since projective
modules over either outer algebra are direct summands of finite direct
sums of the corresponding regular module. Hence the displayed
projective dimensions provide precisely the required uniform bounds.

For part~(d), the assumption gives
\[
\operatorname{pgl.dim}_{\Lambda}\overline{\Lambda}
:=
\sup\bigl\{
\pd_\Lambda X
\mid
X\in\operatorname{mod}\text{-}\overline{\Lambda}
\bigr\}
\leq1.
\]
Thus the assertion follows from Theorem~\ref{thm:2}.
\end{proof}
 
We now specialize the preceding recollement results to triangular matrix algebras. In this setting, the two diagonal idempotents give rise to two canonical recollements of module categories, and the corresponding length constants can be computed explicitly.  For the lower triangular matrix algebra $
\Lambda=
\begin{pmatrix}
A&0\\
{}_BM_A&B
\end{pmatrix}$,  the two standard idempotents $e_1=
\begin{pmatrix}
1&0\\
0&0
\end{pmatrix}$ and $e_2=
\begin{pmatrix}
0&0\\
0&1
\end{pmatrix}$ induce two canonical recollements.   The description of the $\Lambda$-module  can be given, see the details in \cite[III.2]{ARS},  in terms of a triple 
\[
(X,Y,\varphi),
\qquad
\varphi:Y\otimes_BM\longrightarrow X.
\]

For $e_1$, we have $e_1\Lambda e_1\cong A,$ $\Lambda/\Lambda e_1\Lambda\cong B,$  
and hence a recollement
\[
\begin{tikzcd}
\operatorname{mod}B
\arrow[r,"i_1"]
&
\operatorname{mod}\Lambda
\arrow[l,bend left=40,"p_1"]
\arrow[l,bend right=40,swap,"q_1"]
\arrow[r,"e_1"]
&
\operatorname{mod}A
\arrow[l,bend left=40,"r_1"]
\arrow[l,bend right=40,swap,"l_1"] .
\end{tikzcd}
\]
The functors are given by
\[
i_1(Y)=(0,Y,0),
\qquad
e_1(X,Y,\varphi)=X,  \qquad
l_1(X)=(X,0,0),
\]
while
$
r_1(X)
=
\bigl(
X,\Hom_A(M,X),\operatorname{ev}
\bigr),
$
where $\operatorname{ev}:
\Hom_A(M,X)\otimes_BM\longrightarrow X$
 is the evaluation map. If $\widetilde{\varphi}:
Y\longrightarrow\Hom_A(M,X)$  denotes the morphism adjoint to $\varphi$, then
\[
q_1(X,Y,\varphi)=Y,
\qquad
p_1(X,Y,\varphi)=\ker\widetilde{\varphi}.
\]
In particular, the standard recollement sequence
\[
0\longrightarrow i_1p_1(Z)
\longrightarrow Z
\longrightarrow r_1e_1(Z)
\longrightarrow i_1(Y'_Z)
\longrightarrow0
\]
takes the explicit form
\[
0
\longrightarrow
(0,\ker\widetilde{\varphi},0)
\longrightarrow
(X,Y,\varphi)
\longrightarrow
\bigl(
X,\Hom_A(M,X),\operatorname{ev}
\bigr)
\longrightarrow
(0,\Coker\widetilde{\varphi},0)
\longrightarrow0.
\]
Thus
$Y'_Z\cong\Coker\widetilde{\varphi}.
$

For $e_2$, we have
$e_2\Lambda e_2\cong B,$   $\Lambda/\Lambda e_2\Lambda\cong A,$
and hence a recollement
\[
\begin{tikzcd}
\operatorname{mod}A
\arrow[r,"i_2"]
&
\operatorname{mod}\Lambda
\arrow[l,bend left=40,"p_2"]
\arrow[l,bend right=40,swap,"q_2"]
\arrow[r,"e_2"]
&
\operatorname{mod}B
\arrow[l,bend left=40,"r_2"]
\arrow[l,bend right=40,swap,"l_2"] .
\end{tikzcd}
\]
Here
\[
i_2(X)=(X,0,0),
\qquad
e_2(X,Y,\varphi)=Y,
\qquad
l_2(Y)
=
\bigl(
Y\otimes_BM,\,
Y,\,
\operatorname{id}_{Y\otimes_BM}
\bigr),
\]
\[
r_2(Y)=(0,Y,0), 
\qquad
p_2(X,Y,\varphi)=X,
\qquad
q_2(X,Y,\varphi)=\Coker\varphi.
\]
In this case, the second standard recollement sequence reduces to the
short exact sequence
\[
0
\longrightarrow
(X,0,0)
\longrightarrow
(X,Y,\varphi)
\longrightarrow
(0,Y,0)
\longrightarrow0.
\]
Consequently, $Y'_Z=0$ for every $Z\in\operatorname{mod}\Lambda$ in the recollement induced
by $e_2$.

\begin{lemma}\label{lem:triangular-length-constants}
Let $\Lambda=
\begin{pmatrix}
A&0\\
{}_BM_A&B
\end{pmatrix}$ 
be a finite-dimensional lower triangular matrix algebra, and let $e_1=
\begin{pmatrix}
1&0\\
0&0
\end{pmatrix},$ $e_2=
\begin{pmatrix}
0&0\\
0&1
\end{pmatrix}.
$ For $i=1,2$, let $L_l(e_i)$ and $L_r(e_i)$ be the constants associated
with the idempotent recollement determined by $e_i$. Then
\[
L_l(e_1)=1, \qquad 
L_r(e_1)
=
\max\bigl\{
\ell_B(\Hom_A(M,T))
\mid
T\in\operatorname{simp}A
\bigr\},
\]
\[
L_l(e_2)
=
1+
\max\bigl\{
\ell_A(T\otimes_BM)
\mid
T\in\operatorname{simp}B
\bigr\},
\qquad L_r(e_2)=0.
\]
\end{lemma}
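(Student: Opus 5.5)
The plan is to compute the four constants directly from the explicit triple descriptions of $l_1,l_2$ and of the standard sequence recalled above. We use the standard description of simple $\Lambda$-modules. A simple $\Lambda$-module is either $(T,0,0)$ with $T$ simple over $A$, or $(0,T,0)$ with $T$ simple over $B$. Indeed, for a simple $(X,Y,\varphi)$, the subobject $(X,0,0)$ forces either $Y=0$ or $X=0$. Lengths of $\Lambda$-modules are additive over the two components, so $\ell_\Lambda(X,Y,\varphi)=\ell_A(X)+\ell_B(Y)$. This follows from the exact sequence $0\to(X,0,0)\to(X,Y,\varphi)\to(0,Y,0)\to0$ together with the fact that $(X,0,0)$ and $(0,Y,0)$ have lengths $\ell_A X$ and $\ell_B Y$.

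\emph{The constants $L_l$.} For $e_1$, the simple objects of $E=A$ are simple $A$-modules $T$, and $l_1(T)=(T,0,0)$ has length $1$, so $L_l(e_1)=1$. For $e_2$, with $T$ simple over $B$, we have $l_2(T)=(T\otimes_BM,T,\id)$. By additivity its length is $\ell_A(T\otimes_BM)+1$. Maximizing over $T$ gives the stated formula for $L_l(e_2)$.

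\emph{The constants $L_r$.} For $e_2$, we showed above that $Y'_Z=0$ for every $Z$, so $L_r(e_2)=0$. For $e_1$, we have $Y'_Z\cong\Coker\widetilde\varphi$ with $\widetilde\varphi\colon Y\to\Hom_A(M,X)$, so we run through the simple $\Lambda$-modules $Z$.
\begin{itemize}
\item If $Z=(0,T,0)$, then $X=0$, so $\Hom_A(M,X)=0$ and $Y'_Z=0$.
\item If $Z=(T,0,0)$ with $T$ simple over $A$, then $Y=0$, so $\widetilde\varphi=0$ and $Y'_Z=\Hom_A(M,T)$, of length $\ell_B(\Hom_A(M,T))$.
\end{itemize}
Taking the maximum over both types of simples gives $L_r(e_1)=\max\{\ell_B(\Hom_A(M,T))\mid T\in\operatorname{simp}A\}$. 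The maximum is taken over a nonempty family, and the zero contributions from the second type do not affect it.

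The only point requiring care is a bookkeeping issue: the paper's convention that $Y'_S$ is a fixed choice in the second recollement sequence. We must check that the explicit sequences displayed before the lemma are legitimate choices. They are, since they are the canonical unit sequences, and $\Coker$ is determined up to isomorphism, which is all that matters for length. The rest is routine, and I expect no real obstacle beyond stating the classification of simples and the length additivity cleanly.
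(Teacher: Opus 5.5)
Your proposal is correct and follows the paper's proof essentially step for step. You read off $l_1(T)=(T,0,0)$ and $l_2(T)=(T\otimes_BM,T,\id)$, get $\ell_\Lambda(l_2T)=\ell_A(T\otimes_BM)+1$ from the sequence $0\to(T\otimes_BM,0,0)\to l_2(T)\to(0,T,0)\to0$, and compute $Y'_S\cong\Coker\widetilde\varphi$ on the two kinds of simple $\Lambda$-modules (with $Y'_Z=0$ throughout for $e_2$); your explicit classification of the simples and your remark that $Y'_S$ is determined up to isomorphism (because $i$ is fully faithful) only spell out what the paper uses tacitly.
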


\begin{proof}
We first consider $e_1$. For every $T\in\operatorname{simp}A$, $l_1(T)= (T,0,0)$ is a simple $\Lambda$-module. 
Therefore  $\ell_\Lambda(l_1(T))=1,$ and hence $L_l(e_1)=1.$

 For a $\Lambda$-module $(X,Y,\varphi),$ let $\widetilde{\varphi}:
Y\longrightarrow\Hom_A(M,X)$ 
be the morphism adjoint to $\varphi$. The standard recollement sequence yields
$Y'_{(X,Y,\varphi)}
\cong
\operatorname{Coker}\widetilde{\varphi}.
$ 
If $S=(T,0,0)$ with $T\in\operatorname{simp}A$, then $Y'_S
\cong
\Hom_A(M,T),$  whereas if $S=(0,U,0)$ with $U\in\operatorname{simp}B$, then $Y'_S=0.$ 
Thus
\[
L_r(e_1)
=
\max\bigl\{
\ell_B\Hom_A(M,T)
\mid
T\in\operatorname{simp}A
\bigr\}.
\]

We next consider $e_2$. For every $T\in\operatorname{simp}B$, $l_2(T)
=
\bigl(
T\otimes_BM,\,
T,\,
\operatorname{id}
\bigr).$
There is a short exact sequence
\[
0
\longrightarrow
(T\otimes_BM,0)
\longrightarrow
l_2(T)
\longrightarrow
(0,T)
\longrightarrow
0.
\]
Since $(0,T)$ is a simple $\Lambda$-module and $\ell_\Lambda(T\otimes_BM,0)
=
\ell_A(T\otimes_BM),$ we obtain $\ell_\Lambda(l_2(T))
=
\ell_A(T\otimes_BM)+1.$ 
Therefore
\[
L_l(e_2)
=
1+
\max\bigl\{
\ell_A(T\otimes_BM)
\mid
T\in\operatorname{simp}B
\bigr\}.
\]
Finally, for every $\Lambda$-module $(X,Y,\varphi)$,  the standard
recollement sequence yields  $Y'_{(X,Y,\varphi)}=0$,  and consequently $L_r(e_2)=0.$
\end{proof}

Using the explicit constants above, the general recollement estimates give the following triangular reduction.

\begin{corollary}[Triangular reduction]
\label{C1}
Let
$
\Lambda
=
\begin{pmatrix}
A&0\\
{}_BM_A&B
\end{pmatrix}
$
be a finite-dimensional lower triangular matrix algebra. Set
$L=1+
\max\left\{
\ell_A(T\otimes_BM)
\mid
T\in\operatorname{simp}B
\right\}
$ 
and fix
$k\geq1$.  Then:
\begin{enumerate}[$(a)$]
\item $k\text{-}\ddell B\leq L\bigl(k\text{-}\ddell\Lambda+1\bigr)-1.$  
\item If $\pd_A M<\infty$, then $ k\text{-}\ddell A \leq \pd_A M + k\text{-}\ddell\Lambda.$
\item If $\pd_A M<\infty$, then
$\begin{aligned}
k\text{-}\ddell\Lambda
\leq
\max\Bigl\{
k\text{-}\ddell A,\pd_A M+1+
k\text{-}\ddell B \Bigr\}.
\end{aligned}
$
\end{enumerate}
In particular, if $\operatorname{gl.dim}A\leq1, $ $k\text{-}\operatorname{ddell}\Lambda
\leq
k\text{-}\operatorname{ddell}B+2.
$
\end{corollary}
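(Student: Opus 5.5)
The plan is to obtain all three parts by specializing Theorem~\ref{T1} to the two idempotent recollements induced by $e_1$ and $e_2$ described above. The needed projective-dimension constants are computed from the triple description, and the length constants come from Lemma~\ref{lem:triangular-length-constants}. Throughout, the right-hand sides are read as $\infty$ whenever a term involved is infinite, so only finite cases need care.

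For $(a)$ I would use the $e_2$-recollement, whose quotient category is $\operatorname{mod}B$. I need $\pd(e_2)$, i.e.\ the projective dimensions of $e_2$ applied to projective $\Lambda$-modules. The indecomposable projective $\Lambda$-modules are summands of $l_1(A)=(A,0,0)$ and $l_2(B)=(M,B,\operatorname{id})$. Since $e_2(A,0,0)=0$ and $e_2 l_2(B)=B$, the functor $e_2$ preserves projectives and $\pd(e_2)=0$. Theorem~\ref{T1}(a) with $L_l(e_2)=L$ from Lemma~\ref{lem:triangular-length-constants} then gives $(a)$.

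For $(b)$ I would use the $e_1$-recollement, whose quotient category is $\operatorname{mod}A$. Here $e_1(A,0,0)=A$ and $e_1(M,B,\operatorname{id})=M$, so $\pd(e_1)=\pd_A M$, which agrees with $\pd_A(\Lambda e_1)=\pd_A(A\oplus M)$. Theorem~\ref{T1}(a) with $L_l(e_1)=1$ gives $k\text{-}\ddell A\le \pd_AM+(k\text{-}\ddell\Lambda+1)-1$, which is $(b)$.

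For $(c)$ I would return to the $e_2$-recollement, now with $\cA=\operatorname{mod}A$ embedded by $i_2$ and $\cC=\operatorname{mod}B$. The functor $r_2(Y)=(0,Y,0)$ is visibly exact. Next, $i_2(A)=(A,0,0)$ is projective, so $\pd(i_2)=0$. To bound $\pd(r_2)$, I would use the short exact sequence
\[
0\to(M,0,0)\to(M,B,\operatorname{id})\to(0,B,0)\to0,
\]
whose middle term is projective. The functor $i_2$ is exact and sends projectives to projectives, so it carries a projective resolution of $M$ to one of $(M,0,0)$; hence $\pd_\Lambda(M,0,0)\le\pd_AM$ and $\pd(r_2)\le\pd_AM+1$. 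Since $L_r(e_2)=0$, Theorem~\ref{T1}(c) yields $\max\{k\text{-}\ddell A,\ \pd_AM+1+k\text{-}\ddell B\}$. The only delicate point is the product $L_r(e_2)(k\text{-}\ddell A+1)$ when $k\text{-}\ddell A=\infty$, but then the bound is trivially $\infty$.

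For the final assertion, $\operatorname{gl.dim}A\le1$ gives $\pd_AM\le1$. It also gives $k\text{-}\ddell A\le k\text{-}\dell A\le1$: for any $X$, $\Omega X$ is projective, hence $0$ in $\underline{\cA}$, hence a stable retract of any $\Omega^{1+k}Y$. Substituting into $(c)$ gives $\max\{1,\,2+k\text{-}\ddell B\}=k\text{-}\ddell B+2$. I expect the main care to lie in the projective-dimension computations for $\pd(e_1)$ and $\pd(r_2)$, which require correctly identifying the indecomposable projective triples.
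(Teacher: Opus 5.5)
Your proposal is correct and follows essentially the same route as the paper. Parts $(a)$ and $(b)$ come from Theorem~\ref{T1}(a) applied to the $e_2$- and $e_1$-recollements, using $\pd(e_2)=0$, $\pd(e_1)=\pd_AM$, $L_l(e_2)=L$ and $L_l(e_1)=1$. Part $(c)$ comes from Theorem~\ref{T1}(c) for the $e_2$-recollement, using $\pd(i_2)=0$, $L_r(e_2)=0$ and $\pd(r_2)\leq\pd_AM+1$ via $0\to(M,0,0)\to e_2\Lambda\to(0,B,0)\to0$; this is exactly the paper's sequence $0\to M\otimes_Ae_1\Lambda\to e_2\Lambda\to B\to0$.

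The only difference is presentation. The paper goes through Corollary~\ref{cor:idempotent-recollement} and phrases the constants as $\pd_E(\Lambda e)$, $\pd_\Lambda\overline{\Lambda}$ and $\pd_\Lambda\Hom_E(\Lambda e,E)$, while you read them off the triple description. You also justify $k\text{-}\ddell A\leq1$ for every $k$ a little more explicitly than the paper does.
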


\begin{proof}
Let $e_1=
\begin{pmatrix}
1&0\\
0&0
\end{pmatrix},$  $e_2=
\begin{pmatrix}
0&0\\
0&1
\end{pmatrix}.$ 
Then $e_1\Lambda e_1\cong A,$ $e_2\Lambda e_2\cong B.$ Moreover, as right $A$-modules,
$
(\Lambda e_1)_A
\cong
A_A\oplus M_A,$ and therefore $\pd_A(\Lambda e_1)=\pd_A M.$ 
On the other hand,
$
(\Lambda e_2)_B\cong B_B,
$
so $\Lambda e_2$ is projective as a right $B$-module. We will also use the exact sequence of right $\Lambda$-modules
\[
0
\longrightarrow
M\otimes_A e_1\Lambda
\longrightarrow
e_2\Lambda
\longrightarrow
B
\longrightarrow
0,
\]
where $B$ is regarded as a right $\Lambda$-module via the canonical
projection. The bimodule ${}_Ae_1\Lambda_\Lambda$ is projective as a left
$A$-module, and the functor $-\otimes_A e_1\Lambda
\colon
\operatorname{mod}\text{-}A
\longrightarrow
\operatorname{mod}\text{-}\Lambda$ 
is exact and sends projective right $A$-modules to projective right
$\Lambda$-modules. Consequently, $\pd_\Lambda(M\otimes_Ae_1\Lambda)
\leq
\pd_A M.$ 
The preceding exact sequence therefore gives
\[
\pd_\Lambda B
\leq
\pd_A M+1.
\]

For~$(a)$, apply
Corollary~\ref{cor:idempotent-recollement}$(a)$ with $e=e_2$ and $L_l(e_2)=L$. Since  
$(\Lambda e_2)_B\cong B_B,
$
we have $\pd_B(\Lambda e_2)=0$, and hence
\[
k\text{-}\ddell B
\leq
L\bigl(k\text{-}\ddell\Lambda+1\bigr)-1.
\]

For~$(b)$, apply
Corollary~\ref{cor:idempotent-recollement}$(a)$ with $e=e_1$ and $L_l(e_1)=1$. Since $e_1\Lambda e_1\cong A$ and $\pd_A(\Lambda e_1)=\pd_A M<\infty,$
we obtain
\[
k\text{-}\ddell A
\leq
\pd_A M + k\text{-}\ddell\Lambda.
\]

For~$(c)$, take $e=e_2$. There are algebra isomorphisms $\Lambda/\Lambda e_2\Lambda\cong A,$ $e_2\Lambda e_2\cong B.$
Moreover, as right $\Lambda$-modules, $\Lambda/\Lambda e_2\Lambda
\cong
e_1\Lambda,$  so $\pd_\Lambda(\Lambda/\Lambda e_2\Lambda)=0.$ We also have $(\Lambda e_2)_B\cong B_B.$
Hence
$\Hom_B(\Lambda e_2,B)\cong B$ as right $\Lambda$-modules. Therefore
\[
\pd_\Lambda\Hom_B(\Lambda e_2,B)
=
\pd_\Lambda B
\leq
\pd_A M+1.
\]
Applying Corollary~\ref{cor:idempotent-recollement}(c) with $L_r(e_2)=0$ gives
\[
\begin{aligned}
k\text{-}\ddell\Lambda
\leq
\max\Bigl\{
k\text{-}\ddell A,\pd_A M+1+k\text{-}\ddell B\Bigl\}.
\end{aligned}
\]

Finally, if $\operatorname{gl.dim}A\leq1, $ then $\pd_A M\leq1$ and $\ddell A\leq1$. By ($c$), $k\text{-}\operatorname{ddell}\Lambda
\leq
k\text{-}\operatorname{ddell}B+2.
$

\end{proof}

We illustrate Corollary~\ref{C1} with a simple example in which the estimate in part~$(c)$ is attained.
\begin{example}
Let $r\geq1$ and consider the lower triangular matrix algebra
\[
\Lambda_r=
\begin{pmatrix}
\Bbbk&0\\
\Bbbk^r&\Bbbk
\end{pmatrix}.
\]
Equivalently, $\Lambda_r$ is the path algebra of the acyclic quiver
\[
\begin{tikzcd}
2
\arrow[r,shift left=6pt,"\alpha_1"]
\arrow[r,shift left=2pt,"\alpha_2"]
\arrow[r,shift right=2pt,"\cdots" description]
\arrow[r,shift right=6pt,"\alpha_r"']
&
1 .
\end{tikzcd}
\]
Here
$A=B=\Bbbk,$ ${}_BM_A=\Bbbk^r.$
Since $A$ and $B$ are semisimple,
$
k\text{-}\ddell A
=
k\text{-}\ddell B
=
0
$
for every $k\geq1$, and $\pd_A M=0.$ Hence Corollary~\ref{C1}(c) gives
\[
k\text{-}\ddell\Lambda_r
\leq
\max\{0,1\}
=
1.
\]

On the other hand, $\Lambda_r$ is a non-semisimple hereditary algebra.
Let $S$ be its non-projective simple module. Since every positive
syzygy over a hereditary algebra is projective, $S$ cannot be a
stable direct summand of a positive syzygy. Hence $ k\text{-}\dell_{\Lambda_r}S>0 $
for every $k\geq1$. Consequently,
$
k\text{-}\ddell\Lambda_r\geq1.
$
Therefore
\[
k\text{-}\ddell\Lambda_r=1
\qquad
\text{for every }k\geq1.
\]
Thus this example  makes  the estimate in
Corollary~\ref{C1}(c) is sharp.
\end{example}

We next apply Corollary~\ref{C1} to the homological heart of a bound quiver
algebra. The homological heart of a quotient of a path algebra is introduced
in \cite{GM2017}.  Let $\Lambda=\Bbbk Q/I$ be an admissible quotient of the path algebra
$\Bbbk Q$ over a field $\Bbbk$. Define
\[
X
=
\bigl\{
v\in Q_0
\mid
v\text{ lies on a nontrivial oriented cycle in }Q
\bigr\}
\]
and
\[
Y
=
\bigl\{
v\in Q_0
\mid
v\text{ lies on a path whose initial and terminal vertices belong to }
X
\bigr\}.
\]
The \emph{homological heart} $H=H(Q)$ is the full subquiver of $Q$ with
vertex set
\[
H_0=Y.
\]
We also consider the full subquivers $H^+$, $H^-$, and $H^o$ determined
by the following vertex sets:
\begin{align*}
H_0^+
&=
\bigl\{
v\in Q_0\setminus H_0
\mid
\text{there exists a path }H_0\rightsquigarrow v
\bigr\},\\
H_0^-
&=
\bigl\{
v\in Q_0\setminus H_0
\mid
\text{there exists a path }v\rightsquigarrow H_0
\bigr\},\\
H_0^o
&=
\bigl\{
v\in Q_0\setminus H_0
\mid
\text{there is no path }H_0\rightsquigarrow v
\text{ and no path }v\rightsquigarrow H_0
\bigr\}.
\end{align*}

Let $e$, $e^+$, $e^-$, and $e^o$ be the sums of the vertex idempotents
corresponding to the vertices in $H_0$, $H_0^+$, $H_0^-$, and $H_0^o$,
respectively. Then
\[
1_\Lambda=e^++e^o+e+e^-.
\]
With respect to this decomposition, $\Lambda$ has the block form
\[
\Lambda
\cong
\begin{pmatrix}
e^+\Lambda e^+&0&0&0\\
e^o\Lambda e^+&e^o\Lambda e^o&0&0\\
e\Lambda e^+&0&e\Lambda e&0\\
e^-\Lambda e^+&e^-\Lambda e^o&e^-\Lambda e&e^-\Lambda e^-
\end{pmatrix}.
\]
By \cite[Proposition~5.1(4)]{GM2017}, the full subquiver of $Q$ with
vertex set
\[
H_0^+\cup H_0^-\cup H_0^o
\]
has no oriented cycles. Consequently, the algebras
\[
\begin{pmatrix}
e^+\Lambda e^+&0\\
e^o\Lambda e^+&e^o\Lambda e^o
\end{pmatrix}
\qquad\text{and}\qquad
e^-\Lambda e^-
\]
have finite global dimension.  By two applications of the Corollary~\ref{C1}, one obtains explicit upper bounds relating $k\text{-}\ddell\Lambda$ and $k\text{-}\ddell(e\Lambda e)$.
For the present application, we only record the resulting finiteness equivalence.
\begin{corollary}
\label{cor:ddell-homological-heart}
Let $\Lambda=\Bbbk Q/I$ be an admissible quotient of a path algebra,
let $H=H(Q)$ be its homological heart, and let $e$ and $e^-$ be the sums
of the vertex idempotents corresponding to $H_0$ and $H_0^-$,
respectively. If
$\pd_{e\Lambda e}(e^-\Lambda e)<\infty,$
then, for every $k\geq1$,
\[
k\text{-}\ddell\Lambda<\infty
\quad\Longleftrightarrow\quad
k\text{-}\ddell(e\Lambda e)<\infty.
\]
\end{corollary}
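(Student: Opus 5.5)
The plan is to peel off the acyclic parts of $\Lambda$ in two steps, each handled by Corollary~\ref{C1}. Both steps use the block form displayed before the statement.

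\textbf{First reduction.} Set $f=e^++e^o+e$. Since $e^-\Lambda f$ is the only nonzero off-diagonal block in the last row, $\Lambda$ is a lower triangular matrix algebra
\[
\Lambda\cong\begin{pmatrix} A_1&0\\ M_1&B_1\end{pmatrix},
\qquad A_1=f\Lambda f,\quad B_1=e^-\Lambda e^-,\quad M_1=e^-\Lambda f .
\]
Here $B_1$ has finite global dimension, so $k\text{-}\ddell B_1\le\gldim B_1<\infty$. The bound $k\text{-}\ddell\le\gldim$ holds because $\Omega^{n}S$ is projective for $n=\gldim$, hence a stable retract of $0=\Omega^{n+k}$ of anything.

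Next I would check $\pd_{A_1}M_1<\infty$. The algebra $A_1$ is again block triangular:
\[
A_1\cong\begin{pmatrix} C&0\\ N&e\Lambda e\end{pmatrix},
\qquad C=\begin{pmatrix} e^+\Lambda e^+&0\\ e^o\Lambda e^+&e^o\Lambda e^o\end{pmatrix},\quad N=e\Lambda(e^++e^o)=(e\Lambda e^+,\,0).
\]
As a right $A_1$-module, $M_1=(e^-\Lambda(e^++e^o),\,e^-\Lambda e,\,\varphi)$ is an extension of the $C$-part $(X,0,0)$ by $(0,e^-\Lambda e,0)$. The $C$-part has finite projective dimension over $A_1$: $\gldim C<\infty$, and $i(X)=(X,0,0)$ with $A_1/A_1e_HA_1\cong C$ projective on that side. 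The module $(0,e^-\Lambda e,0)$ has finite projective dimension because $\pd_{e\Lambda e}(e^-\Lambda e)<\infty$ by hypothesis, and $r_2$ sends projective $e\Lambda e$-modules to modules of projective dimension at most $\pd_C N+1<\infty$, as in the proof of Corollary~\ref{C1}(c). Hence $\pd_{A_1}M_1<\infty$.

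With this, Corollary~\ref{C1}(b),(c) give that $k\text{-}\ddell\Lambda<\infty$ if and only if $k\text{-}\ddell A_1<\infty$. For the forward direction, (b) bounds $k\text{-}\ddell A_1$ by $\pd M_1+k\text{-}\ddell\Lambda$. For the reverse, (c) bounds $k\text{-}\ddell\Lambda$ using $k\text{-}\ddell B_1<\infty$.

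\textbf{Second reduction.} Apply Corollary~\ref{C1} to $A_1$ with $A=C$, $B=e\Lambda e$ and $M=N$. We have $\pd_C N\le\gldim C<\infty$ and $k\text{-}\ddell C<\infty$. Part (a) gives $k\text{-}\ddell(e\Lambda e)\le L(k\text{-}\ddell A_1+1)-1$. Part (c) gives $k\text{-}\ddell A_1\le\max\{k\text{-}\ddell C,\ \pd_C N+1+k\text{-}\ddell(e\Lambda e)\}$. So $k\text{-}\ddell A_1<\infty$ if and only if $k\text{-}\ddell(e\Lambda e)<\infty$. Chaining the two reductions proves the corollary.

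\textbf{Main obstacle.} I expect the hardest step to be verifying $\pd_{A_1}M_1<\infty$, since this is where the hypothesis on $e^-\Lambda e$ enters. If the module-theoretic argument above becomes messy, I would fall back on Corollary~\ref{cor:idempotent-recollement}(c) for the idempotent $f$. That requires checking that $\Lambda f$ is projective over $f\Lambda f$, that $\pd_\Lambda\overline\Lambda<\infty$ (true since $\overline\Lambda=e^-\Lambda e^-$ has finite global dimension and is a triangular corner), and that $\pd_\Lambda\Hom_{A_1}(\Lambda f,A_1)<\infty$.
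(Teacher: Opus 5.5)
Your proof is correct, but it peels the two acyclic pieces off in the opposite order from the paper.

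The paper first writes $\Lambda$ as triangular with top-left block the corner algebra on $H_0^+\cup H_0^o$. Its bottom-right block is $(e+e^-)\Lambda(e+e^-)=\begin{pmatrix} e\Lambda e&0\\ e^-\Lambda e&e^-\Lambda e^-\end{pmatrix}$. Since the top block has finite global dimension, the condition $\pd_AM<\infty$ is automatic, and Corollary~\ref{C1}(a),(c) reduce $\Lambda$ to this corner. A second application of Corollary~\ref{C1}(b),(c) then puts $e\Lambda e$ on top and $e^-\Lambda e^-$ at the bottom. There the hypothesis $\pd_{e\Lambda e}(e^-\Lambda e)<\infty$ is exactly the required $\pd_AM<\infty$ condition.

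You split off $e^-\Lambda e^-$ first, so you must prove $\pd_{f\Lambda f}(e^-\Lambda f)<\infty$ over the larger algebra $A_1=f\Lambda f$. Your argument for this is sound. It uses the extension $0\to(X,0,0)\to M_1\to(0,e^-\Lambda e,0)\to0$, the fact that $i_2$ is exact and preserves projectives, and the fact that $r_2$ is exact and sends $e\Lambda e$ to a module of projective dimension at most $\pd_CN+1$. So both routes reach the same conclusion.

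The paper's ordering is more economical, since it needs no projective-dimension computation beyond the hypothesis. Yours gives, as a by-product, that finiteness of $\pd_{e\Lambda e}(e^-\Lambda e)$ lifts to finiteness of $\pd_{f\Lambda f}(e^-\Lambda f)$.

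Your fallback is not needed, and as written it would not help. First, $(\Lambda f)_{f\Lambda f}\cong A_1\oplus M_1$ is projective only if $M_1$ is. Second, $\Lambda/\Lambda f\Lambda\cong(0,e^-\Lambda e^-,0)$ is the cokernel of $(M_1,0,0)\hookrightarrow e^-\Lambda$. Its projective dimension is therefore tied to $\pd_{A_1}M_1$, not to $\gldim(e^-\Lambda e^-)$, so the fallback leads back to the same obstacle.
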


\begin{proof}
Set
\[
A
=
\begin{pmatrix}
e^+\Lambda e^+&0\\
e^o\Lambda e^+&e^o\Lambda e^o
\end{pmatrix},
\quad
B
=
\begin{pmatrix}
e\Lambda e&0\\
e^-\Lambda e&e^-\Lambda e^-
\end{pmatrix},
\quad 
\text{and}
\quad
M
=
\begin{pmatrix}
e\Lambda e^+&0\\
e^-\Lambda e^+&e^-\Lambda e^o
\end{pmatrix}.
\]
Then $\Lambda
\cong
\begin{pmatrix}
A&0\\
{}_BM_A&B
\end{pmatrix}.
$
As observed above, $\operatorname{gl.dim}A<\infty$ and $\operatorname{gl.dim}(e^-\Lambda e^-)<\infty$.
Hence $k\text{-}\ddell A<\infty,$ $k\text{-}\ddell(e^-\Lambda e^-)<\infty.$ 
Moreover, $M$ is a finitely generated right $A$-module. Since $A$ has
finite global dimension,
$\pd_A M<\infty.$
Corollary~\ref{C1}, applied to
$\Lambda
\cong
\begin{pmatrix}
A&0\\
M&B
\end{pmatrix},
$
therefore gives
\begin{equation}\label{eq:heart-first-reduction}
k\text{-}\ddell\Lambda<\infty
\quad\Longleftrightarrow\quad
k\text{-}\ddell B<\infty.
\end{equation}
By assumption,
$\pd_{e\Lambda e}(e^-\Lambda e)<\infty$.
A second application  of Corollary~\ref{C1}, together with
$k\text{-}\ddell e^-\Lambda e^-<\infty$, gives
\[
k\text{-}\ddell B<\infty
\quad\Longleftrightarrow\quad
k\text{-}\ddell e\Lambda e<\infty.
\tag{5.3}
\]
Combining  (5.2) and  (5.3) proves the assertion.
\end{proof}

Finally, we illustrate the applications of vertex reduction to ddell with several examples. Throughout, modules are right modules and paths are composed from left to right.

\begin{example}\label{ex:vertex-reduction-arrow-core}
Let
$\Lambda=\Bbbk Q/I,$
where \(Q\) is the quiver
\[
\begin{tikzcd}
9\arrow[r,"s"]&6 \arrow[rr,"g"] && 1 \arrow[rd,"a"] \arrow[ld,"b"] & \\
&&3 \arrow[rd,"d"] & & 2\arrow[ld,"c"] &7\arrow[l,"h"]\\
&5\arrow[uu,shift left=4pt,"f_1"]
 \arrow[uu,shift right=4pt,"f_2"']
&&4\arrow[ll,"e"]& 8\arrow[l,"t"]
\end{tikzcd}
\]
and
\[
I=
\left\langle
ac-bd,\,
cef_1,\,
de,\,
ef_1g,\,
f_1gb,\,
ga
\right\rangle.
\]
Put
\[
\varepsilon=e_1+e_2+e_3+e_4+e_5+e_6
\]
and set $A=\varepsilon\Lambda\varepsilon.$
Then \(A\) is the six-vertex algebra $A\cong
\Bbbk Q'/I,$ 
where \(Q'\) is the full subquiver of \(Q\) supported on
\(\{1,2,3,4,5,6\}\).  Since 
$\pd_\Lambda S_7=\pd_\Lambda S_8=\pd_\Lambda S_9=1.$
Apply
Corollary~\ref{cor:idempotent-recollement}(d) to remove the vertices
\(7,8,9\). We have for every
\(k\geq1\),
\[
k\text{-}\ddell\Lambda<\infty
\quad\Longleftrightarrow\quad
k\text{-}\ddell A<\infty.
\tag{5.5}
\]
The corner algebra \(A=\varepsilon\Lambda\varepsilon\) is precisely the
six-vertex algebra considered in
\cite[Example~6.3]{GPS2021} and studied in
Example~\ref{ex:GPS-arrow-removal}. 
It follows from \((5.5)\),  with $\ddell A=1$, that
$\ddell\Lambda<\infty.$
\end{example}

\begin{example}\cite[Example 1]{XI}
  Let \(\Lambda=\Bbbk Q/I\), where
\[
\begin{tikzcd}[column sep=3.7em,row sep=2.8em]
 &1 \arrow[dl,bend right=13,"\gamma"']
    \arrow[dl,bend left=13,"\beta"]
    \arrow[dr,"\eta"]&\\
2 \arrow[loop left,"\alpha"]\arrow[dr,"\delta"']
 &&3\arrow[dl,"\xi"]&5\arrow[l]\\
 &4&&
\end{tikzcd}
\]
and
$ I=\langle
       \alpha^3,\,\alpha\delta,\,\beta\delta,
       \,\eta\xi-\gamma\delta
    \rangle. $
We use a   reduction to the homological heart to prove that
\[
k\text{-}\ddell\Lambda<\infty
\qquad
\text{for every }k\geq1.
\]

The only vertex lying on a nontrivial oriented cycle is vertex \(2\).
Moreover, no path leaving vertex \(2\) can return to it. Hence the
homological heart of \(Q\) consists only of vertex \(2\):
$
H_0=\{2\}.
$
The remaining vertices decompose as
\[
H_0^-=\{1\},
\qquad
H_0^+=\{4\},
\qquad
H_0^\circ=\{3,5\}.
\]
Put $e=e_2,$ $e^-=e_1,$ and  $A=e\Lambda e.$ 
The algebra supported on the homological heart is
\[
A
\cong
\Bbbk[\alpha]/(\alpha^3).
\]
If \(T\) denotes its unique simple module, then
$ \Omega T\cong\begin{array}{ccccccccccccc}
2  \\
2  \end{array},$ and $ \Omega^2T\cong 2 \cong T. $ 
Thus \(T\) is two-periodic.  For every \(k\geq1\), it is a \(k\)-th
syzygy of either 2  or $\begin{array}{ccccccccccccc}
2  \\
2  \end{array}$, according to the parity of
\(k\).  Directly from the definition, $ k\text{-}\ddell\bigl(\Bbbk[\alpha]/(\alpha^3)\bigr)=0.$ 
Since $ e^-\Lambda e=e_1\Lambda e_2
 \cong A^{\oplus2},$  $\pd_A(e^-\Lambda e)=0.$ 
The homological-heart reduction therefore gives directly
\[
 k\text{-}\ddell\Lambda<\infty
 \quad\Longleftrightarrow\quad
 k\text{-}\ddell A<\infty.
\]
Since \(k\text{-}\ddell A=0\), we conclude that
$k\text{-}\ddell\Lambda<\infty$ for every $k\geq1.$
\end{example}

\begin{example}[Reduction to the homological heart]\label{ex:seven-vertex-homological-heart}
Let \(\Lambda=\Bbbk Q/I\), where \(Q\) is the quiver
\[
\begin{tikzcd}[column sep=3.1em,row sep=3em]
1
 \arrow[d,"u_1"']
 \arrow[r,"c_1"]
&
4
 \arrow[r,bend left=18,"\alpha"]
&
5
 \arrow[l,bend left=18,"\beta"]
 \arrow[r,"v"]
&
6
&
7\arrow[l,"w"']
\\
2
 \arrow[r,"u_2"']
 \arrow[ur,"c_2"]
&
3\arrow[u,"c_3"']
\end{tikzcd}
\]
and
$I=
\left\langle
\alpha\beta,\,
\beta\alpha,\,
u_1c_2,\,
u_2c_3
\right\rangle.$ 
We apply Corollary~\ref{cor:ddell-homological-heart} to prove 
\[k\text{-}\ddell\Lambda<\infty\ \ \text{for every} \ \ k\geq1.\]

The only vertices lying on nontrivial oriented cycles are \(4\) and
\(5\). Moreover, no path leaving these two vertices can return to
either of them. Hence
\[
H_0=\{4,5\},
\qquad
H_0^-=\{1,2,3\},
\qquad
H_0^+=\{6\},
\qquad
H_0^\circ=\{7\}.
\]
Put $e=e_4+e_5,$ $e^-=e_1+e_2+e_3$, $e^+=e_6,$ $e^\circ=e_7,$ and $A=e\Lambda e.$ 
The algebra supported on the homological heart is
\[
A
\cong
\Bbbk
\left(
4\mathrel{\substack{\xrightarrow{\alpha}\\[-1mm]
                    \xleftarrow[\beta]{}}}5
\right)
\big/
\langle\alpha\beta,\beta\alpha\rangle.
\]
Let \(T_4\) and \(T_5\) denote its simple right
modules. Their projective covers give $\Omega_A T_4\cong T_5,$
$\Omega_A T_5\cong T_4.$  It follows directly from the definition that
\[
 k\text{-}\operatorname{dell}_A T_4
 =k\text{-}\operatorname{dell}_A T_5=0,
 \qquad
 k\text{-}\ddell A=0.
\]
The algebra supported on \(H^+\cup H^\circ\) is
\[
D_0
=
(e^++e^\circ)\Lambda(e^++e^\circ)
\cong
\Bbbk(7\xrightarrow{w}6),
\]
whereas the algebra supported on \(H^-\) is
\[
D_-
=
e^-\Lambda e^-
\cong
\Bbbk(1\xrightarrow{u_1}2\xrightarrow{u_2}3).
\]
Thus $\gldim D_0=\gldim D_-=1.$ 
Since 
$e^-\Lambda e\cong(e_4A)^{\oplus3}$, we obtain $\pd_A(e^-\Lambda e)=0.$
All the hypotheses of
Corollary~\ref{cor:ddell-homological-heart} are now satisfied. We conclude that
\[
k\text{-}\ddell\Lambda<\infty
\qquad
\text{for every }k\geq1.
\]

Thus a direct calculation involving all seven simple \(\Lambda\)-modules
is replaced by a periodicity calculation for the two simple modules in
the homological heart and the projectivity of \(e^-\Lambda e\).\end{example}


\begin{thebibliography}{99}


\bibitem{APT}
M. Auslander, M. I. Platzeck and G. Todorov,
\emph{Homological theory of idempotent ideals},
Trans. Amer. Math. Soc. \textbf{332} (1992), 667--692.


\bibitem{ARS}
M. Auslander, I. Reiten and S. O. Smal\o,
\emph{Representation Theory of Artin Algebras},
Corrected reprint of the 1995 original, Cambridge Studies in Advanced Mathematics, Vol. 36,
Cambridge University Press, Cambridge, 1997.

\bibitem{BLM}
M. Barrios, M. Lanzilotta and G. Mata,
\emph{Delooping levels},
Algebr. Represent. Theory \textbf{29} (2026), 203--219.


\bibitem{Beligiannis2000}
A. Beligiannis,
\emph{Cleft extensions of abelian categories and applications to ring theory},
Comm. Algebra \textbf{28} (2000), 4503--4546.


 \bibitem{BBD} \ A.  Beilinson, J.  Bernstein   and  P. Deligne, Faisceaux Pervers,  Soc. Math. France, vol. 100 (1982). 


\bibitem{CX}
H. Chen and C. Xi,
\emph{Recollements of derived categories III: finitistic dimensions},
J. Lond. Math. Soc. \textbf{95} (2017), 633--658.


\bibitem{EPS2022}
K. Erdmann, C. Psaroudaskis and \O. Solberg,
\emph{Homological invariants of the arrow removal operation}
Represent. Theory \textbf{26} (2022), 370-387.

\bibitem{EGPS2025}K. Erdmann, O. Giatagantzidis, C. Psaroudaskis and \O. Solberg,\emph{Monomial arrow removal and the finitistic dimension conjecture},arXiv:2506.23747, 2025.

\bibitem{FP} V. Franjou  and T. Pirashvili, 
\emph{Comparison of abelian categories recollements},
 Doc. Math.  \textbf{9} (2004), 41–56.


\bibitem{Gabriel1973} 
P. Gabriel, 
 \emph{Indecomposable Representations II. Symposia Mathematica}, 
 vol. XI (Convegno di Algebra Commutativa, INDAM, Rome, 1971), Academic Press, London  (1973), 81–104. 


\bibitem{GLP}
M. A. Gatica, M. Lanzilotta and M. I. Platzeck,
\emph{Idempotent ideals and the Igusa--Todorov functions},
Algebr. Represent. Theory \textbf{20} (2017), 275--287.

\bibitem{G1}
V. G\'elinas,
\emph{The finitistic dimension of an Artin algebra with radical square zero},
Proc. Amer. Math. Soc. \textbf{149} (2021), 5001--5012.


\bibitem{Gelinas2022}
V. G\'elinas,
\emph{The depth, the delooping level and the finitistic dimension},
Adv. Math. \textbf{394} (2022), 108052.


\bibitem{Giatagantzidis2025}
O. Giatagantzidis
\emph{Arrow reductions for the finitistic dimension conjecture},
arXiv:2507.12978,2025



\bibitem{GM2017}
E. L. Green and E.  N. Marcos,
\emph{Convex subquivers and the   finitistic dimension},
Illinois J. Math.  \textbf{61} (2017), no. 3-4, 385-397.


\bibitem{GPS2021}
E. L. Green, C.  Psaroudaskis and  \O. Solberg,
\emph{Reduction techniques for the finitistic dimension},
Trans. Am. Math. Soc. \textbf{374} (2021), 6839-6879.





\bibitem{Guo2025}
R. Guo,
\emph{Symmetry of derived delooping level},
Algebr. Represent. Theory \textbf{28} (2025), 1125--1137.

\bibitem{GuoIgusa2025}
R. Guo and K. Igusa,
\emph{Derived delooping levels and finitistic dimension},
Adv. Math. \textbf{464} (2025), 110152.



\bibitem{IT}
K. Igusa and G. Todorov,
\emph{On the finitistic global dimension conjecture for Artin algebras},
Fields Inst. Commun. \textbf{45} (2005), 201--204.

\bibitem{KershawRickard2024}
L. Kershaw and J. Rickard,
\emph{A finite-dimensional algebra with infinite delooping level},
Ann. Represent. Theory \textbf{1} (2024), no. 1, 61--65.


\bibitem{K2026}
P. Kostas,
\emph{Cleft extensions of  rings and singularity categories },
J.  Algebra \textbf{685} (2026), 160-224.

\bibitem{KP2025}
P. Kostas and C. Psaroudakis,
\emph{Injective generation for graded rings},
J. Pure Appl. Algebra \textbf{229} (2025), no. 7, 107960.


\bibitem{K}
S. K\"onig,
\emph{Tilting complexes, perpendicular categories and recollements of derived categories of rings},
J. Pure Appl. Algebra \textbf{73} (1991), 211--232.



\bibitem{LiangMaYang2025}
L. Liang, Y. Ma and G. Yang,
\emph{Homological invariant properties under cleft extensions},
arXiv:2506.02691, 2025.


\bibitem{MaZhengLiu2026}
Y. Ma, J. Zheng and Y.-Z. Liu,
\emph{Three homological invariants under cleft extensions},
J. Algebra \textbf{694} (2026), 287--323.


\bibitem{PX}
S. Pan and C. Xi,
\emph{Finiteness of finitistic dimension is invariant under derived equivalences},
J. Algebra \textbf{322} (2009), 21--24.

\bibitem{PH}\  C. Psaroudakis,  
\emph{Homological theory of recollements of abelian categories},
  J. Algebra \textbf{398} (2014): 63-110.  

\bibitem{PV}
C. Psaroudakis   and  J.  Vit$\acute{\mathrm{o}}$ria, 
\emph{Recollements of module categories}, 
 Appl. Categ. Struct. 22  (2014): 579-593. 


\bibitem{QH}
Y. Qin and Y. Han,
\emph{Reducing homological conjectures by $n$-recollements},
Algebr. Represent. Theory \textbf{19} (2016), 377--395.

\bibitem{R}
C. M. Ringel,
\emph{The finitistic dimension of a Nakayama algebra},
J. Algebra \textbf{576} (2021), 95--145.

\bibitem{S}
E. Sen,
\emph{Delooping level of Nakayama algebras},
Arch. Math. \textbf{117} (2021), 141--146.

\bibitem{JW}
J. Wei,
\emph{Finitistic dimension and Igusa--Todorov algebras},
Adv. Math. \textbf{222} (2009), 2215--2226.




\bibitem{XI}\ C. Xi, On the finitistic dimension conjecture I: Related to representation-finite algebras, J. Pure Appl. Algebra 193 (2004): 287-305.

\bibitem{XII}
C. Xi,
\emph{On the finitistic dimension conjecture II: related to finite global dimension},
Adv. Math. \textbf{201} (2006), 116--142.

\bibitem{XIII}
C. Xi,
\emph{On the finitistic dimension conjecture III: related to the pair $eAe\subseteq A$},
J. Algebra \textbf{319} (2008), 3666--3688.



\bibitem{XiZhang2026}
C. Xi and J. Zhang,
\emph{New invariants of stable equivalences and Auslander--Reiten conjecture},
Math. Ann. \textbf{394} (2026), Paper No. 99, 30 pp.
\end{thebibliography}
\end{document}